\documentclass[preprint,11pt,number,sort]{elsarticle}
\usepackage[a4paper,margin=2.5cm]{geometry}

\usepackage{amsmath,amssymb,amsthm,mathtools}
\usepackage{mathrsfs}
\usepackage{bm}
\usepackage{booktabs}
\usepackage{tabularx}
\usepackage{microtype}
\usepackage{xcolor}
\usepackage{graphicx}
\usepackage{float}
\usepackage{longtable}
\usepackage{pdflscape}
\usepackage[section]{placeins}
\usepackage{xurl}
\usepackage{subcaption}
\usepackage{comment}

\definecolor{citationblue}{RGB}{0,70,140}

\usepackage[
    colorlinks=true,
    citecolor=citationblue,
    linkcolor=black,
    urlcolor=citationblue,
    pdfauthor={Matteo Parsani, Rasha AlJahdali, Lisandro Dalcin},
    pdftitle={Isothermal walls in the compressible Navier--Stokes equations:
affine entropy, ballistic energy, and entropy-stable discrete formulations},
]{hyperref}

\usepackage{orcidlink}

\journal{Journal of Computational Physics}

\newcommand{\bQ}{\bm{Q}}
\newcommand{\bW}{\bm{W}}
\newcommand{\bWT}{\bm{W}_{T_w}}
\newcommand{\bu}{\bm{u}}
\newcommand{\bn}{\bm{n}}
\newcommand{\be}{\bm{e}}
\newcommand{\bqheat}{\bm{q}_{h}}
\newcommand{\btau}{\bm{\tau}}
\newcommand{\bFI}[1]{\bm{F}^{(I)}_{#1}}
\newcommand{\bFV}[1]{\bm{F}^{(V)}_{#1}}
\newcommand{\FI}[1]{F_{#1}}
\newcommand{\FIT}[1]{F_{T_w,#1}}
\newcommand{\R}{\mathbb{R}}
\newcommand{\dd}{\,\mathrm{d}}
\newcommand{\p}{\partial}
\newcommand{\Div}{\nabla\!\cdot}
\newcommand{\grad}{\nabla}
\newcommand{\transpose}{\mathsf{T}}
\newcommand{\qnormal}{q_n}
\newcommand{\wall}{\Gamma_w}
\newcommand{\Tw}{T_w}
\newcommand{\ST}{S_{T_w}}
\newcommand{\Bcal}{\mathscr{B}}
\newcommand{\Dcal}{\mathscr{D}}
\newcommand{\psol}{p_{\mathrm{s}}}
\newcommand{\Nsol}{N_{\mathrm{s}}}

\newtheorem{definition}{Definition}[section]
\newtheorem{lemma}[definition]{Lemma}
\newtheorem{theorem}[definition]{Theorem}
\newtheorem{corollary}[definition]{Corollary}
\newtheorem{remark}[definition]{Remark}

\begin{document}

\begin{frontmatter}

\title{Isothermal walls in the compressible Navier--Stokes equations:
affine entropy, ballistic energy, and entropy-stable discrete formulations}

\author[KAUSTCEMSE,KAUSTPSE]{Matteo Parsani\corref{cor1}\orcidlink{0000-0001-7300-1280}}
\ead{matteo.parsani@kaust.edu.sa}

\author[KAUSTCEMSE]{Rasha AlJahdali\orcidlink{0000-0002-1646-7196}}
\ead{rasha.aljahdali@kaust.edu.sa}

\author[KAUSTCEMSE]{Lisandro Dalcin\orcidlink{0000-0001-8086-0155}}
\ead{lisandro.dalcin@kaust.edu.sa}

\cortext[cor1]{Corresponding author.}

\address[KAUSTCEMSE]{Computer, Electrical and Mathematical Sciences and Engineering
Division, King Abdullah University of Science and Technology,
Thuwal 23955-6900, Saudi Arabia}

\address[KAUSTPSE]{Physical Science and Engineering Division, King Abdullah
University of Science and Technology, Thuwal 23955-6900, Saudi Arabia}

\begin{abstract}
We develop entropy-stable isothermal-wall conditions for the compressible
Navier--Stokes equations at continuous and semi-discrete levels. For a
stationary, impermeable, no-slip wall at constant positive temperature, adding
total-energy density divided by wall temperature to the canonical convex
entropy cancels the wall contribution without suppressing heat transfer. This
affine modification preserves the entropy Hessian, relative entropy, and
volume entropy production. It is proportional to total ballistic-energy
density and yields a conditional $L_2$-type bound based on the initial field
under the stated assumptions. Two wall formulations use a common discontinuous
Galerkin discretization with the summation-by-parts property and simultaneous
approximation terms. Both give zero adapted wall contribution without a
penalty and a nonpositive contribution with it. The complete-residual
formulation penalizes the full wall-data residual and balances fluid energy
through its complete numerical wall flux. The Fourier-energy-exact formulation
combines an entropy-neutral skew correction with a momentum-projected penalty
so that the complete numerical outward energy flux equals the one-sided
numerical Fourier heat flux at every wall node and spatial resolution.
Pointwise tests verify the local identities. Manufactured-solution studies with entropy-stable interior coupling
demonstrate convergence rates consistent with order $\psol+1$ in the
discrete $L_2$ norm, where $\psol$ is the solution polynomial degree.
Thermal-relaxation calculations verify the semi-discrete energy and
adapted-entropy balances. Three-dimensional calculations demonstrate
balance closure in a closed domain, convergence of curved-wall traces,
and applicability to a complex supersonic separated flow.
\end{abstract}

\begin{keyword}
Compressible Navier--Stokes equations \sep
Isothermal wall \sep
Affine entropy \sep
Ballistic energy \sep
Discontinuous Galerkin methods \sep
Summation-by-parts operators \sep
Simultaneous approximation terms
\end{keyword}

\end{frontmatter}

\section{Introduction}
\label{sec:introduction}

Convex entropy pairs underpin admissibility, symmetrization, and stability
estimates for nonlinear systems of conservation laws, from the foundations
of Lax, Friedrichs and Lax, Godunov, and Harten to the broader theory of
Dafermos, Smoller, and LeFloch
\cite{Lax1957,FriedrichsLax1971,Godunov1961,Harten1983,
Dafermos2016,Smoller1994,LeFloch2002}. For a sufficiently smooth, strictly
convex entropy, a global bound and a uniform positive Hessian lower bound
provide a priori $L_2$-type control of the conservative variables and a
standard nonlinear-stability mechanism
\cite{Dafermos1979,Dafermos2016,Smoller1994}. Tadmor's framework carries these 
entropy principles into numerical
discretization through the construction of entropy-conservative and
entropy-stable fluxes \cite{Tadmor1987,Tadmor2003}. For the compressible
Navier--Stokes equations considered here, both the continuous analysis and
its discrete counterpart start from the canonical convex entropy given by
the negative thermodynamic entropy density~\cite{HughesFrancaMallet1986}.

High-order computational fluid dynamics (CFD) offers greater accuracy per
degree of freedom with lower numerical dissipation and dispersion for
unsteady and wave- or vortex-dominated flows
\cite{WangEtAl2013HighOrderCFD}.
Nevertheless, production CFD relies heavily on mature lower-order methods
whose robustness, geometric flexibility, and industrial workflows remain
difficult to match. The National Aeronautics and Space Administration
(NASA) CFD Vision 2030 study and its 2024 assessment identify robust, efficient,
variable-order accurate, and scalable algorithms as enabling predictive unsteady and
scale-resolving simulations
\cite{SlotnickEtAl2014CFDVision2030,Malik2024CFDVision2030}.

Advances toward robust high-order methods include energy-stable
flux-reconstruction (FR) schemes~\cite{VincentCastonguayJameson2011};
entropy-stable summation-by-parts (SBP) finite-difference methods
\cite{Fisher2012PhD,FisherCarpenter2013}; the SBP interpretation and energy-stable split
forms of collocated discontinuous Galerkin (DG) spectral-element
methods~\cite{Gassner2013}; entropy-stable DG schemes
\cite{CarpenterEtAl2014}, viscous interface coupling
\cite{ParsaniInterface2015}, entropy-stable split-form DG operators
\cite{GassnerWintersKopriva2016}, and entropy-stable SBP discretizations
on general curved elements~\cite{CreanEtAl2018}. Further developments include
$hp$-refinement
\cite{FriedrichEtAl2018,DelReyFernandezEtAl2020HP}, scalable entropy-stable collocated DG
solvers~\cite{ParsaniEtAl2021,KraisEtAl2020SplitALE}, other high-order DG frameworks
\cite{KraisEtAl2021FLEXI,FerrerEtAl2023HORSES3D}, and nonlinearly
entropy-stable FR formulations
\cite{CicchinoNadarajahDelReyFernandez2022}.

Physical boundary closures remain essential: an incompatible closure can
destroy the global entropy estimate and destabilize the computation even
when volume and interior-interface operators are entropy-stable
\cite{ParsaniCarpenterNielsen2015,RojasEtAl2021}. At a conforming interior
face, paired contributions share a consistently oriented normal, so
conservative terms telescope and the remaining entropy contribution is zero
or a controlled nonpositive quadratic term
\cite{CarpenterEtAl2014,ParsaniInterface2015}. A physical boundary has no
neighboring contribution: only the interior state is available. A ghost
state or numerical boundary value must use the prescribed data and make the
one-sided contribution vanish or have the required sign.

Wall developments include the entropy-stable Euler conditions of Sv{\"a}rd
and {\"O}zcan~\cite{SvardOzcan2014}, the entropy-stable no-slip
Navier--Stokes conditions first reported by Parsani, Carpenter, and Nielsen
\cite{ParsaniCarpenterNielsen2014NASA,ParsaniCarpenterNielsen2015}, the
conservative and entropy-stable framework of Dalcin et al.
\cite{DalcinEtAl2019}, and further analysis of full no slip and
velocity-gradient control by Sv{\"a}rd, Carpenter, and Parsani~\cite{SvardCarpenterParsani2018}.
Chan, Lin, and Warburton later presented modal-DG treatments of adiabatic
data, closely related in structure to
Refs.~\cite{ParsaniCarpenterNielsen2015,DalcinEtAl2019}, and prescribed
isothermal data~\cite{ChanLinWarburton2022}. Their semi-discrete isothermal
boundary term reproduces the nonhomogeneous canonical heat-entropy exchange,
consistent with the heat-entropy-transfer analysis of
Ref.~\cite{ParsaniCarpenterNielsen2015}.

For the canonical mathematical entropy function
\begin{equation*}
  S=-\rho s,
\end{equation*}
where $\rho$ is the mass density and $s$ the specific thermodynamic entropy,
a prescribed-temperature wall contributes $q_n/T_w$ through Fourier
conduction \cite{ParsaniCarpenterNielsen2015,DalcinEtAl2019,ChanLinWarburton2022}. 
Here, $T_w>0$ is the prescribed wall temperature and $q_n$ is the
outward normal conductive heat flux. Unlike at an adiabatic wall, this
exchange has a solution-dependent sign, so $S$ does not generally satisfy a
homogeneous non-increase estimate. The isothermal construction of Chan, Lin, and Warburton
reproduces this exchange, not a homogeneous boundary-entropy inequality.
To the best of our knowledge, a
homogeneous wall estimate for a stationary no-slip wall maintained at a
prescribed temperature has not previously been formulated through an adapted
convex entropy together with a compatible semi-discrete wall treatment.

For heat-conducting compressible fluids, Dirichlet temperature data are
treated through a ballistic-energy functional associated with the prescribed
reservoir temperature
\cite{Feireisl2012,FeireislNovotny2017,ChaudhuriFeireisl2022},
\begin{equation*}
  \rho E-T_w\rho s,
\end{equation*}
where $E$ is the specific total energy. We connect this construction with
convex-entropy theory by identifying the affine representative selected by
the isothermal-wall balance,
\begin{equation*}
  S_{T_w}=S+\frac{\rho E}{T_w}.
\end{equation*}
We derive the continuous constant-temperature wall balance and prove that
$S_{T_w}$ preserves the canonical entropy Hessian, strict convexity, viscous
symmetrizer, and volume entropy production, while its boundary contribution
vanishes at a stationary no-slip wall with $T=T_w$. Therefore, its homogeneous estimate
falls within standard convex-entropy theory~\cite{Dafermos2016,Smoller1994}.

Within a common DG--SBP framework with simultaneous-approximation-term (SAT)
interface and boundary couplings, we introduce two isothermal-wall SAT
formulations. The complete-residual SAT (CR--SAT) extends the coupled
conservative/auxiliary-gradient architecture of
Refs.~\cite{ParsaniCarpenterNielsen2015,DalcinEtAl2019} through an admissible
wall state and an unprojected penalty on the complete wall-data residual.
The Fourier-energy-exact SAT (FEE--SAT) introduces an entropy-neutral skew
correction and a momentum-projected penalty so that the numerical outward
total-energy flux equals the one-sided numerical Fourier heat flux at finite
resolution. Both formulations are boundary-entropy-conservative or
boundary-entropy-stable with respect to $S_{T_w}$, depending on whether wall
dissipation is disabled or enabled. Pointwise tests verify the local wall identities.
In one dimension,
a matched manufactured-solution study compares the spatial accuracy of
both formulations under identical interior discretizations, while
thermal-relaxation tests verify the assembled total-energy and
adapted-entropy balances. Three-dimensional calculations examine
adapted-entropy balance closure in a closed domain, curved-wall trace
convergence in an annular pipe, and operation in supersonic separated
flow past an inclined cube.

Although presented in a collocated DG setting, the construction requires
only compatible entropy variables, numerical fluxes, viscous-gradient data,
and SBP volume and face operators. This operator-level structure facilitates
its adaptation to compatible finite-difference, finite-volume,
finite-element, modal DG, and FR discretizations, provided that their volume
and interface operators reproduce the same discrete entropy identities.

\textit{Note that the interior-interface couplings and boundary treatments are written
in SBP--SAT operator form, with weak interface coupling and boundary
enforcement supplied by SAT corrections. In an SBP-compatible weak DG
formulation, the same contributions appear as numerical surface-flux
corrections or lifting terms after discrete integration by parts~\cite{Gassner2013,GassnerEtAl2018BR1}.
Consequently, the proposed wall formulations can be incorporated directly
into SBP-compatible discontinuous Galerkin spectral-element methods (DGSEM),
even when those methods are derived and presented in weak DG form rather
than in explicit SAT terminology.}

The paper is organized as follows. Section~\ref{sec:equations}
introduces the governing equations and canonical entropy structure.
Section~\ref{sec:canonical-wall} derives the stationary
isothermal-wall entropy and energy exchanges.
Section~\ref{sec:adapted-entropy} develops the affine entropy, its a priori
consequence, and ballistic-energy interpretation.
Section~\ref{sec:semidiscrete-wall} presents the common discretization and
two wall SAT formulations, and Section~\ref{sec:results} reports their
algebraic, transient, and convergence verification. Supporting derivations
and detailed data appear in the appendices and supplementary material.

\section{Governing equations and canonical entropy structure}
\label{sec:equations}

Let $\Omega\subset\R^d$, with $d\in\{2,3\}$, be a fixed bounded domain
with sufficiently smooth boundary $\partial\Omega$. The position vector is
$\bm x=(x_1,\ldots,x_d)^{\transpose}$, $t\in[0,t_f]$ denotes time, and
$\bn=(n_1,\ldots,n_d)^{\transpose}$ is the outward unit normal. We set
$\p_t=\p/\p t$ and $\p_i=\p/\p x_i$, use the Einstein convention for
repeated spatial indices $i,j,k\in\{1,\ldots,d\}$, and let
$(\cdot)^{\transpose}$ and $\lvert\cdot\rvert$ denote transpose and the
Euclidean norm, respectively.

\subsection{Compressible Navier--Stokes equations}
\label{subsec:conservation-laws}

In the absence of body forces, volumetric heat sources, and radiative
effects, we consider the following initial-boundary-value problem for the
compressible Navier--Stokes--Fourier equations:
\begin{subequations}
\label{eq:cns-ibvp}
\begin{align}
  \p_t\bQ+\p_i\bFI{i}
  &=
  \p_i\bFV{i},
  &&
  (\bm x,t)\in\Omega\times(0,t_f],
  \label{eq:cns}
  \\
  \bQ(\bm x,0)
  &=
  \bQ_0(\bm x),
  &&
  \bm x\in\Omega,
  \label{eq:cns-initial-condition}
  \\
  \mathfrak B
  \bigl(
    \bQ,\grad\bQ;\bn,\bm x,t
  \bigr)
  &=
  \bm g_{\partial\Omega}(\bm x,t),
  &&
  (\bm x,t)\in\partial\Omega\times(0,t_f].
  \label{eq:cns-boundary-operator}
\end{align}
\end{subequations}
Here, $\bQ_0$ is the prescribed initial state, while $\mathfrak B$ denotes
the collection of boundary operators on the different portions of
$\partial\Omega$, with corresponding data $\bm g_{\partial\Omega}$. Its
possible dependence on $\grad\bQ$ accommodates viscous and thermal boundary
conditions. The wall operator required in this work is specified in
Section~\ref{sec:canonical-wall}, whereas the remaining boundary operators are
problem dependent and enter the global entropy balances through their
boundary contributions. 

The conservative state is
\begin{equation}
  \bQ
  =
  \begin{pmatrix}
    \rho\\
    \bm m\\
    \rho E
  \end{pmatrix}
  =
  \begin{pmatrix}
    \rho\\
    \rho\bu\\
    \rho E
  \end{pmatrix}
  \in\R^{d+2},
  \qquad
  E=e+\frac12\lvert\bu\rvert^2.
  \label{eq:conservative-state}
\end{equation}
Here, $\rho$ is the mass density, $\bu$ is the velocity,
$\bm m=\rho\bu$ is the momentum density, and $e$ and $E$ are the specific
internal and total energies, respectively. The physical inviscid and viscous 
fluxes in the $x_i$ direction,
denoted by $\bFI{i}$ and $\bFV{i}$, respectively, are given by
\begin{equation}
  \begin{aligned}
    \bFI{i}
    &=
    \begin{pmatrix}
      \rho u_i\\
      \rho u_i\bu+p\be_i\\
      (\rho E+p)u_i
    \end{pmatrix},
    &
    \bFV{i}
    &=
    \begin{pmatrix}
      0\\
      \btau\be_i\\
      \bu\cdot(\btau\be_i)-q_{h,i}
    \end{pmatrix}.
  \end{aligned}
  \label{eq:physical-fluxes}
\end{equation}
Here, $p$ is the thermodynamic pressure, $\be_i$ is the $i$th Cartesian unit
vector, $\btau=[\tau_{ij}]_{i,j=1}^d$ is the viscous stress tensor, and
$\bqheat=(q_{h,1},\ldots,q_{h,d})^{\transpose}$ is the conductive heat-flux
vector. The mass component of $\bFV{i}$ vanishes because the classical compressible Navier--Stokes model
contains no diffusive mass flux.

For a calorically perfect ideal gas,
\begin{equation}
  \begin{gathered}
    p=\rho RT=(\gamma-1)\rho e,
    \qquad
    e=c_vT,
    \qquad
    h=e+\frac{p}{\rho}=c_pT,
    \\
    c_p-c_v=R,
    \qquad
    \gamma=\frac{c_p}{c_v}>1.
  \end{gathered}
  \label{eq:ideal-gas-closure}
\end{equation}
Here, $T$ is the absolute temperature, $h$ is the specific enthalpy,
$R>0$ is the specific gas constant, and the constants $c_v>0$ and $c_p>0$
are the mass-specific heat capacities at constant volume and pressure,
respectively.

The specific thermodynamic entropy satisfies
\begin{equation}
  \begin{aligned}
    T\,\mathrm d s
    &=
    \mathrm d e
    +p\,\mathrm d\!\left(\frac1\rho\right),
    \\
    s(\rho,T)
    &=
    s_{\mathrm{ref}}
    +c_v\log\!\left(\frac{T}{T_{\mathrm{ref}}}\right)
    -R\log\!\left(\frac{\rho}{\rho_{\mathrm{ref}}}\right).
  \end{aligned}
  \label{eq:thermodynamic-entropy}
\end{equation}
Here, $(\rho_{\mathrm{ref}},T_{\mathrm{ref}})$ is any fixed positive
thermodynamic reference state, and
$s_{\mathrm{ref}}=s(\rho_{\mathrm{ref}},T_{\mathrm{ref}})$. A consistent
change of reference state leaves $s(\rho,T)$ unchanged. Shifting the entropy
zero by a constant $C_s\in\R$ replaces $s$ by $s+C_s$ and $S=-\rho s$
by $S-C_s\rho$. Thus, $\mathrm d s$, the Gibbs relation, the entropy
Hessian, relative entropy, and entropy production remain unchanged. The
reference-state and entropy-zero conventions are summarized
in~\ref{app:thermodynamics}.

The thermodynamically admissible state set is
\begin{equation}
  \mathcal A
  =
  \left\{
    \bQ
    =
    \begin{pmatrix}
      \rho,&\bm m^{\transpose},&\rho E
    \end{pmatrix}^{\transpose}
    \in\R^{d+2}:
    \rho>0,
    \quad
    \rho E-\frac{\lvert\bm m\rvert^2}{2\rho}>0
  \right\}.
  \label{eq:admissible-set}
\end{equation}
For a calorically perfect ideal gas, $\bQ\in\mathcal A$ is equivalent to
$\rho>0$ and $T>0$. We assume that
$\bQ_0(\bm x)\in\mathcal A$ and consider sufficiently smooth classical
solutions with values in $\mathcal A$, so that the chain rule, integration by
parts, and the required boundary values are well defined. Positivity
preservation is a separate challenge and is not addressed herein.
The equivalent positivity characterization can be found
in~\ref{app:thermodynamics}. See also
Refs.~\cite{LandauLifshitz1987,Feireisl2007,FeireislNovotny2017}.

Let $\bm I$ be the $d\times d$ identity tensor. For an isotropic Newtonian,
heat-conducting fluid,
\begin{equation}
  \begin{aligned}
    \bm D(\bu)
    &=
    \frac12\left(\grad\bu+(\grad\bu)^{\transpose}\right),
    &
    \bm D^0(\bu)
    &=
    \bm D(\bu)-\frac1d(\Div\bu)\bm I,
    \\
    \btau
    &=
    2\mu(T)\bm D^0(\bu)+\zeta(T)(\Div\bu)\bm I,
    &
    \bqheat
    &=
    -\kappa(T)\grad T.
  \end{aligned}
  \label{eq:constitutive-laws}
\end{equation}
Here, $\bm D(\bu)$ is the strain-rate tensor, and $\bm D^0(\bu)$ is its
trace-free part. The factor $1/d$ ensures
$\operatorname{tr}\bm D^0=0$, since
$\operatorname{tr}\bm D=\Div\bu$ and $\operatorname{tr}\bm I=d$.
Thus, $q_{h,i}=-\kappa(T)\p_iT$. The dynamic shear viscosity $\mu$, bulk
viscosity $\zeta$, and thermal conductivity $\kappa$ satisfy
\begin{equation}
  \mu(T)>0,
  \qquad
  \zeta(T)\geq0,
  \qquad
  \kappa(T)>0.
  \label{eq:transport-assumptions}
\end{equation}
These assumptions imply nonnegative viscous and thermal entropy
production. The conversion to the usual second coefficient of viscosity,
Stokes' hypothesis, and the resulting entropy-production identity are
reported in~\ref{app:constitutive-details}.

\begin{remark}\label{re:dimension}
All continuous results are formulated for $d\in\{2,3\}$. The
one-dimensional calculations reported later are spatial reductions of the
three-dimensional equations: the solution depends on one Cartesian
coordinate and has zero transverse velocity components. They are not obtained
by setting $d=1$ in the constitutive laws and retain the three-dimensional
normal viscous stress. In particular, this reduction gives
$\tau_{xx}=(4\mu/3+\zeta)\partial_xu$ and
$q_{h,x}=-\kappa\partial_xT$, as used in
Eq.~\eqref{eq:matched-mms-one-dimensional-constitutive-laws}.
\end{remark}

\subsection{Outward-normal flux convention}
\label{subsec:normal-convention}

On $\partial\Omega$, define
\begin{equation}
  u_n=\bu\cdot\bn,
  \qquad
  \p_nT=\bn\cdot\grad T,
  \qquad
  \qnormal
  =
  \bqheat\cdot\bn
  =
  -\kappa(T)\p_nT.
  \label{eq:normal-definitions}
\end{equation}
Because $\bn$ is outward, $\qnormal>0$ means that conductive heat is
transported out of the fluid domain. The outward normal physical fluxes are
\begin{align}
  \bm F_n^{(I)}
  =n_i\bFI{i}
  &=
  \begin{pmatrix}
    \rho u_n\\
    \rho u_n\bu+p\bn\\
    (\rho E+p)u_n
  \end{pmatrix},
  \label{eq:normal-inviscid-flux}
  \\
  \bm F_n^{(V)}
  =n_i\bFV{i}
  &=
  \begin{pmatrix}
    0\\
    \btau\bn\\
    \bu\cdot\btau\bn-\qnormal
  \end{pmatrix}.
  \label{eq:normal-viscous-flux}
\end{align}
Since the net conservative flux in Eq.~\eqref{eq:cns} is
$\bFI{i}-\bFV{i}$, its outward total-energy component is
\begin{equation}
  F_{E,n}
  =
  (\rho E+p)u_n
  -\bu\cdot\btau\bn
  +\qnormal.
  \label{eq:net-normal-energy-flux}
\end{equation}
At a stationary no-slip wall, $\bu=\bm0$ and $F_{E,n}=\qnormal$.
Consequently, $\qnormal>0$ decreases the total energy contained in the fluid
domain.

\subsection{Canonical mathematical entropy pair}
\label{subsec:canonical-entropy}

Following Ref.~\cite{HughesFrancaMallet1986}, we define the canonical
mathematical entropy density (also referred to as the canonical entropy
function) for the compressible Navier--Stokes equations by
\begin{equation}
  S(\bQ)=-\rho s,
  \label{eq:canonical-entropy}
\end{equation}
with entropy variables
\begin{equation}
  \bW
  =
  \nabla_{\bQ}S
  =
  \begin{pmatrix}
    \displaystyle
    \frac{h}{T}-s-\frac{\lvert\bu\rvert^2}{2T}
    \\[1.3ex]
    \displaystyle
    \frac{\bu}{T}
    \\[1.3ex]
    \displaystyle
    -\frac1T
  \end{pmatrix}.
  \label{eq:entropy-variables}
\end{equation}
For every $\bQ\in\mathcal A$, the Hessian
$\bm H_S=\nabla_{\bQ}^2S$ is symmetric positive definite.
Here and below, $\succ\bm0$ and $\succeq\bm0$ denote symmetric positive
definiteness and positive semidefiniteness, respectively. Consequently,
$S$ is strictly convex and the map $\bQ\mapsto\bW$ is one-to-one.
The entropy contraction and canonical balance are summarized
in~\ref{app:canonical-balance-derivation}. See also
Refs.~\cite{Harten1983,HughesFrancaMallet1986,Dafermos2016}.

The scalar inviscid entropy flux is
\begin{equation}
  \FI{i}(\bQ)=S(\bQ)u_i=-\rho s u_i,
  \qquad
  \left(\frac{\p\bFI{i}}{\p\bQ}\right)^{\transpose}\bW
  =\nabla_{\bQ}\FI{i},
  \qquad
  i=1,\ldots,d.
  \label{eq:entropy-pair-compatibility}
\end{equation}
This is the companion flux of the convex entropy pair in the sense of
Refs.~\cite{Lax1957,FriedrichsLax1971,Harten1983,Tadmor1987,Tadmor2003}.
The entropy potential and entropy-flux potential are
\begin{equation}
  \Phi
  =
  \bW^{\transpose}\bQ-S
  =
  \frac{p}{T}
  =
  \rho R,
  \qquad
  \Psi_i
  =
  \bW^{\transpose}\bFI{i}-\FI{i}
  =
  \frac{pu_i}{T}
  =
  \rho Ru_i.
  \label{eq:entropy-potentials}
\end{equation}
Viewed as functions of $\bW$, these potentials satisfy
$\nabla_{\bW}\Phi=\bQ$ and
$\nabla_{\bW}\Psi_i=\bFI{i}$. These standard identities follow from the
entropy-pair compatibility condition; see
Refs.~\cite{Godunov1961,Harten1983,Tadmor2003}. The potential $\Psi_i$
enters the Tadmor identity used below.

The viscous fluxes satisfy
\begin{equation}
  \begin{aligned}
    \bFV{i}
    &=
    \bm C_{ij}(\bQ)\p_j\bW,
    \\
    \bm C_{ij}
    &=
    \bm C_{ji}^{\transpose},
    &
    \sum_{i,j=1}^{d}
    \bm Z_i^{\transpose}\bm C_{ij}\bm Z_j
    &\geq0
  \end{aligned}
  \quad
  \text{for every }
  (\bm Z_1,\ldots,\bm Z_d)\in(\R^{d+2})^d.
  \label{eq:viscous-symmetrization}
\end{equation}
Hence, the block matrix $[\bm C_{ij}]_{i,j=1}^d$ is symmetric positive
semidefinite. This is the standard symmetric hyperbolic--parabolic entropy
structure of the compressible Navier--Stokes equations
\cite{HughesFrancaMallet1986}. The dimension-independent block
form used later is reported in~\ref{app:viscous-coefficient-matrices}.

\subsection{Canonical entropy balance}
\label{subsec:canonical-entropy-balance}

For a smooth solution, contraction of Eq.~\eqref{eq:cns} with
$\bW^{\transpose}$ gives
\begin{equation}
  \begin{aligned}
    \p_tS+\p_i\FI{i}
    &=
    \p_i\!\left(\frac{q_{h,i}}{T}\right)-\Dcal,
    \\
    \Dcal
    &=
    (\p_i\bW)^{\transpose}\bm C_{ij}\p_j\bW
    \\
    &=
    \frac{2\mu}{T}\bm D^0:\bm D^0
    +\frac{\zeta}{T}(\Div\bu)^2
    +\frac{\kappa}{T^2}\lvert\grad T\rvert^2
    \geq0.
  \end{aligned}
  \label{eq:local-canonical-entropy}
\end{equation}
Here, $\bm A:\bm B=A_{ij}B_{ij}$ denotes the Frobenius inner product.
Integration over $\Omega$ yields
\begin{equation}
  \frac{\dd}{\dd t}
  \int_\Omega S\,\dd\Omega
  =
  \int_{\partial\Omega}
  \left(
    \frac{\qnormal}{T}-Su_n
  \right)
  \dd\Gamma
  -
  \int_\Omega\Dcal\,\dd\Omega.
  \label{eq:global-canonical-entropy}
\end{equation}
Therefore, the pointwise canonical boundary contribution is
\begin{equation}
  \Bcal_S
  =
  \bW^{\transpose}\bm F_n^{(V)}-F_n
  =
  \frac{\qnormal}{T}-Su_n,
  \qquad
  F_n=n_i\FI{i}=Su_n.
  \label{eq:boundary-entropy-contribution}
\end{equation}
This is the continuous boundary quantity that the numerical wall treatment
must reproduce and bound. The entropy contraction and the sign of
the viscous quadratic are summarized in~\ref{app:canonical-balance-derivation}.

\section{Canonical entropy and total-energy balances at a stationary
isothermal wall}
\label{sec:canonical-wall}

Let $\wall\subseteq\partial\Omega$ denote a stationary solid wall held at 
constant temperature $\Tw>0$. On $\wall$, the wall
component of the boundary operator in
Eq.~\eqref{eq:cns-boundary-operator} is
\begin{equation}
  \mathfrak B_w(\bQ;\Tw)
  =
  \begin{pmatrix}
    \bu\\
    T-\Tw
  \end{pmatrix}
  =
  \bm0,
  \qquad
  (\bm x,t)\in\wall\times(0,t_f].
  \label{eq:isothermal-wall-data}
\end{equation}
The velocity component imposes no slip, and the temperature component imposes
the prescribed wall temperature. Because the wall is stationary, $\bu=\bm0$
also implies impermeability, $u_n=0$, on $\wall$. For a classical solution, 
the initial data satisfy
\begin{equation*}
  \mathfrak B_w(\bQ_0;\Tw)=\bm0
  \qquad
  \text{on }\wall.
\end{equation*}

The Dirichlet condition $T=\Tw$ fixes the boundary temperature but does
not prescribe $\p_nT$. The outward conductive heat flux
\begin{equation*}
  \qnormal=-\kappa(\Tw)\p_nT
\end{equation*}
is therefore determined by the solution rather than prescribed as wall
data.

\begin{theorem}[Canonical entropy contribution and total-energy flux]
\label{thm:canonical-isothermal-wall-fluxes}
For a sufficiently smooth solution of Eq.~\eqref{eq:cns} that satisfies
\eqref{eq:isothermal-wall-data}, the canonical mathematical-entropy boundary
contribution and the net outward total-energy flux satisfy, pointwise on
$\wall$,
\begin{equation}
  \left.\Bcal_S\right|_{\wall}
  =
  \frac{\qnormal}{\Tw},
  \qquad
  \left.F_{E,n}\right|_{\wall}
  =
  \qnormal.
  \label{eq:canonical-and-energy-wall-fluxes}
\end{equation}
\end{theorem}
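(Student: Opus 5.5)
The plan is direct substitution of the wall data \eqref{eq:isothermal-wall-data} into the pointwise boundary quantities already derived: $\Bcal_S$ in \eqref{eq:boundary-entropy-contribution} and $F_{E,n}$ in \eqref{eq:net-normal-energy-flux}. No new integration or estimate is needed. The only care required is to justify that the trace identities hold pointwise on $\wall$ for a sufficiently smooth solution.

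\emph{Step 1: the entropy contribution.} I start from $\Bcal_S=\bW^{\transpose}\bm F_n^{(V)}-F_n$. First I check the right-hand form $\qnormal/T-Su_n$ in \eqref{eq:boundary-entropy-contribution} from \eqref{eq:entropy-variables} and \eqref{eq:normal-viscous-flux}. Contracting gives
\begin{equation*}
\bW^{\transpose}\bm F_n^{(V)}=\frac{\bu}{T}\cdot\btau\bn-\frac1T\left(\bu\cdot\btau\bn-\qnormal\right)=\frac{\qnormal}{T}.
\end{equation*}
The mass component carries no contribution because the mass entry of $\bm F_n^{(V)}$ is zero. Next I use $\bu=\bm0$ on $\wall$, which gives $u_n=0$ and so $F_n=Su_n=0$. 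Here $S$ is finite because the solution takes values in $\mathcal A$. I also use $T=\Tw>0$, so that $\qnormal/T=\qnormal/\Tw$. Together these give $\Bcal_S|_{\wall}=\qnormal/\Tw$. It is worth remarking that the conductivity is then evaluated as $\kappa(\Tw)$, so $\qnormal=-\kappa(\Tw)\p_nT$. The normal derivative is not prescribed, which is consistent with the discussion preceding the theorem.

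\emph{Step 2: the energy flux.} In \eqref{eq:net-normal-energy-flux}, the term $(\rho E+p)u_n$ vanishes since $u_n=0$, and $\bu\cdot\btau\bn$ vanishes since $\bu=\bm0$. The only requirement is that $\btau$ be bounded on $\wall$, which holds for a smooth solution. This leaves $F_{E,n}|_{\wall}=\qnormal$.

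\emph{Main obstacle.} There is no real analytic obstacle. The one point to state carefully is that $\btau$ and $\grad T$ are evaluated as traces on $\wall$. The no-slip condition kills the velocity itself but not its gradient, so $\btau\bn$ is generally nonzero. The cancellation therefore comes from the factor $\bu$ multiplying it, not from $\btau\bn$ vanishing. Similarly, $\qnormal$ survives because the Dirichlet data constrain $T$ and not $\p_nT$. I would emphasize this to explain why the canonical contribution $\qnormal/\Tw$ has no definite sign.
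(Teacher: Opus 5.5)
Your proposal is correct and follows essentially the same route as the paper: substitute $u_n=0$, $\bu=\bm0$, and $T=\Tw$ into $\Bcal_S=\qnormal/T-Su_n$ and into $F_{E,n}$. Your explicit check that $\bW^{\transpose}\bm F_n^{(V)}=\qnormal/T$ is the same identity the paper records in its appendix on the stationary isothermal-wall balance.
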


\begin{proof}
Equation~\eqref{eq:boundary-entropy-contribution} gives
$\Bcal_S=\qnormal/T-Su_n$. Impermeability gives $u_n=0$, and the isothermal
condition gives $T=\Tw$, proving the first identity. For the total-energy
flux, Eq.~\eqref{eq:net-normal-energy-flux} shows that $u_n=0$ eliminates the
advective contribution, while the full no-slip condition $\bu=\bm0$
eliminates the viscous mechanical-work contribution. Hence,
$F_{E,n}=\qnormal$. These continuous wall identities are consistent with
Refs.~\cite{ParsaniCarpenterNielsen2015,DalcinEtAl2019}. The distinct roles
of no penetration and no slip are detailed in~\ref{app:section3-details}.
\end{proof}

Equation~\eqref{eq:canonical-and-energy-wall-fluxes} expresses the
open-system heat-entropy exchange associated with the canonical pair
$(S,\FI{i})$ \cite{ParsaniCarpenterNielsen2015}. Since $\Tw>0$ and $\qnormal$ may have either sign, the
isothermal wall data do not enforce a zero or nonpositive canonical
boundary contribution. Therefore, the canonical entropy balance alone
does not yield a homogeneous boundary-entropy estimate for these wall
data.

In the integral total-energy balance, an outward flux enters with a minus
sign. After multiplication by $1/\Tw$, the wall contribution to the rate of
change of $\int_\Omega(\rho E/\Tw)\,\dd\Omega$ has the pointwise integrand
$-F_{E,n}/\Tw=-\qnormal/\Tw$. Adding this term to the canonical entropy
boundary contribution gives
\begin{equation}
  \left.
  \left(
    \Bcal_S-\frac{F_{E,n}}{\Tw}
  \right)
  \right|_{\wall}
  =
  0.
  \label{eq:canonical-energy-wall-cancellation}
\end{equation}
Thus, the solution-dependent wall heat flux cancels exactly in this
combination. At this stage,
Eq.~\eqref{eq:canonical-energy-wall-cancellation} is only an identity obtained
by combining the canonical entropy and total-energy balances.
Section~\ref{sec:adapted-entropy} incorporates this combination into a
wall-temperature-adapted affine entropy representative
\cite{LeFloch2002,Dafermos2016,Feireisl2012}.

\section{Wall-temperature-adapted affine entropy and entropy stability}
\label{sec:adapted-entropy}

Equation~\eqref{eq:canonical-energy-wall-cancellation} identifies a linear
combination of the canonical entropy and total-energy balances for which
the isothermal-wall contribution vanishes. Because the total-energy
density $\rho E$ is a conserved component of $\bQ$ and $\Tw$ is constant,
this combination can be represented by an affine modification of the
canonical mathematical-entropy pair
\cite{FriedrichsLax1971,GiovangigliMatuszewski2013}. We first state the
general affine-invariance property and then specialize it to the wall
temperature. The resulting representative is not a new thermodynamic
entropy. Its inherited structure provides the basis for the adapted
entropy balance, exact wall cancellation, conditional entropy-based
estimate, and ballistic-energy interpretation.

\subsection{Affine modification by conserved variables}
\label{subsec:affine-entropy}

\begin{lemma}[Affine invariance of the entropy structure]
\label{lem:affine-entropy}
Let $(S,\FI{i})$ be the canonical entropy pair introduced in
Subsection~\ref{subsec:canonical-entropy}, and let
$\bm a\in\R^{d+2}$ be constant. Define
\begin{equation}
  S^{\bm a}(\bQ)
  =
  S(\bQ)+\bm a^{\transpose}\bQ,
  \qquad
  F_i^{\bm a}(\bQ)
  =
  \FI{i}(\bQ)+\bm a^{\transpose}\bFI{i}(\bQ),
  \qquad
  i=1,\ldots,d.
  \label{eq:affine-entropy-pair}
\end{equation}
Then $(S^{\bm a},F_i^{\bm a})$ is an entropy pair for the inviscid part of
\eqref{eq:cns}. Its entropy variables and Hessian satisfy
\begin{equation}
  \bm W^{\bm a}
  =
  \nabla_{\bQ}S^{\bm a}
  =
  \bW+\bm a,
  \qquad
  \nabla_{\bQ}^{2}S^{\bm a}
  =
  \nabla_{\bQ}^{2}S.
  \label{eq:affine-entropy-variables-hessian}
\end{equation}
Moreover, the entropy potential and entropy-flux potentials are unchanged
when evaluated at the same conservative state:
\begin{equation*}
  \Phi^{\bm a}
  =
  \Phi,
  \qquad
  \Psi_i^{\bm a}
  =
  \Psi_i.
\end{equation*}
Finally,
\begin{equation*}
  \p_i\bm W^{\bm a}
  =
  \p_i\bW.
\end{equation*}
Consequently, the viscous symmetrization and the volume
entropy-production density are identical to those associated with the
canonical mathematical entropy $S$.
\end{lemma}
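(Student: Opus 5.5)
The proof is a direct computation, and I would carry it out in the order of the statement. Since $\bm a$ is constant, differentiating \eqref{eq:affine-entropy-pair} gives $\nabla_{\bQ}S^{\bm a}=\bW+\bm a$. A second differentiation kills the linear term, so $\nabla_{\bQ}^2S^{\bm a}=\nabla_{\bQ}^2S$. This Hessian is symmetric positive definite on $\mathcal A$, as recalled in Subsection~\ref{subsec:canonical-entropy}, so $S^{\bm a}$ is again strictly convex. The identity $\p_i\bm W^{\bm a}=\p_i\bW$ follows immediately, again because $\bm a$ is constant in space and time.

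For the entropy-pair property, I verify the compatibility condition \eqref{eq:entropy-pair-compatibility} for the modified pair. First,
\begin{equation*}
\left(\frac{\p\bFI{i}}{\p\bQ}\right)^{\transpose}\bm W^{\bm a}
=\left(\frac{\p\bFI{i}}{\p\bQ}\right)^{\transpose}\bW+\left(\frac{\p\bFI{i}}{\p\bQ}\right)^{\transpose}\bm a
=\nabla_{\bQ}\FI{i}+\nabla_{\bQ}\bigl(\bm a^{\transpose}\bFI{i}\bigr).
\end{equation*}
The right-hand side equals $\nabla_{\bQ}F_i^{\bm a}$. Equivalently, for smooth solutions of the inviscid system, $\p_t S^{\bm a}+\p_iF_i^{\bm a}=(\p_tS+\p_i\FI{i})+\bm a^{\transpose}(\p_t\bQ+\p_i\bFI{i})=0$, because $\bm a^{\transpose}$ applied to a conservation law is itself a scalar conservation law.

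For the potentials, I compute $\Phi^{\bm a}=(\bW+\bm a)^{\transpose}\bQ-S-\bm a^{\transpose}\bQ=\Phi$. In the same way, $\Psi_i^{\bm a}=(\bW+\bm a)^{\transpose}\bFI{i}-\FI{i}-\bm a^{\transpose}\bFI{i}=\Psi_i$, with everything evaluated at the same $\bQ$. I will remark that, as functions of the entropy variables, they are the same functions composed with the shift $\bW\mapsto\bW+\bm a$.

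The final consequence is where a little care is needed, since viscous terms are not covered by the inviscid compatibility argument. By \eqref{eq:viscous-symmetrization}, $\bFV{i}=\bm C_{ij}(\bQ)\p_j\bW$. The matrices $\bm C_{ij}$ depend only on $\bQ$, and $\p_j\bW=\p_j\bm W^{\bm a}$, so $\bFV{i}=\bm C_{ij}(\bQ)\p_j\bm W^{\bm a}$ with the same symmetric positive semidefinite blocks. Contracting \eqref{eq:cns} with $(\bm W^{\bm a})^{\transpose}$ gives
\begin{equation*}
\p_tS^{\bm a}+\p_iF_i^{\bm a}=\p_i\bigl((\bm W^{\bm a})^{\transpose}\bFV{i}\bigr)-(\p_i\bm W^{\bm a})^{\transpose}\bm C_{ij}\p_j\bm W^{\bm a}.
\end{equation*}
The last quadratic form equals $\Dcal$ from \eqref{eq:local-canonical-entropy}. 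The only change is therefore in the divergence term, which becomes $\p_i(q_{h,i}/T+\bm a^{\transpose}\bFV{i})$. This change affects boundary contributions only, so the volume entropy production is unchanged.
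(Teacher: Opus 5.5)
Your proposal is correct and follows essentially the same route as the paper's proof in the appendix: the compatibility check $\bigl(\p\bFI{i}/\p\bQ\bigr)^{\transpose}\bm W^{\bm a}=\nabla_{\bQ}F_i^{\bm a}$, direct substitution for $\Phi^{\bm a}$ and $\Psi_i^{\bm a}$, and the constancy of $\bm a$ to get $\p_i\bm W^{\bm a}=\p_i\bW$. Your explicit contraction, showing that only the flux-divergence term changes (by $\p_i(\bm a^{\transpose}\bFV{i})$) while the quadratic production term stays $\Dcal$, spells out the final consequence that the paper states in one line and later records as the adapted local entropy balance in Section~\ref{sec:adapted-entropy}.
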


The proof is given in~\ref{app:affine-invariance-proof}. The
unchanged Hessian preserves strict convexity on $\mathcal A$, so the
affine modification changes the entropy representative without changing
its underlying convex entropy structure. We now use this freedom to
select the representative associated with the wall-balance combination
in Eq.~\eqref{eq:canonical-energy-wall-cancellation}.

\subsection{Entropy pair associated with the wall temperature}
\label{subsec:temperature-adapted-pair}

Let
\begin{equation*}
  \be_E
  =
  (0,\ldots,0,1)^{\transpose}
  \in\R^{d+2},
  \qquad
  \be_E^{\transpose}\bQ
  =
  \rho E,
\end{equation*}
and assume throughout this section that $\Tw>0$ is spatially and
temporally constant. To realize the prescribed combination of entropy
and total-energy balances, choose
\begin{equation*}
  \bm a
  =
  \frac{\be_E}{\Tw}.
\end{equation*}
Lemma~\ref{lem:affine-entropy} then gives the wall-temperature-adapted
entropy representative
\begin{equation}
  \ST(\bQ)
  =
  S(\bQ)+\frac{\rho E}{\Tw}
  \label{eq:adapted-entropy}
\end{equation}
and its associated inviscid entropy flux
\begin{equation}
  \FIT{i}(\bQ)
  =
  \FI{i}(\bQ)
  +
  \frac{(\rho E+p)u_i}{\Tw},
  \qquad
  i=1,\ldots,d.
  \label{eq:adapted-entropy-flux}
\end{equation}
The entropy-variable shift in
Eq.~\eqref{eq:affine-entropy-variables-hessian} becomes
\begin{equation}
  \bWT
  =
  \nabla_{\bQ}\ST
  =
  \bW+\frac{\be_E}{\Tw}
  =
  \begin{pmatrix}
    \displaystyle
    \frac{h}{T}
    -
    s
    -
    \frac{\lvert\bu\rvert^2}{2T}
    \\[1.5ex]
    \displaystyle
    \frac{\bu}{T}
    \\[1.5ex]
    \displaystyle
    \frac{1}{\Tw}-\frac{1}{T}
  \end{pmatrix}.
  \label{eq:adapted-entropy-variables}
\end{equation}
In particular, the energy component of $\bWT$ vanishes whenever
$T=\Tw$.

The remaining structural conclusions of
Lemma~\ref{lem:affine-entropy} specialize to
\begin{equation}
  \nabla_{\bQ}^{2}\ST
  =
  \bm H_S,
  \qquad
  \p_i\bWT
  =
  \p_i\bW,
  \qquad
  \bFV{i}
  =
  \bm C_{ij}\p_j\bWT.
  \label{eq:adapted-entropy-structure}
\end{equation}
Thus $\ST$ is strictly convex on $\mathcal A$ and has the same entropy
Hessian and viscous coefficient matrices as $S$. The lemma also gives
the same potential functions $\Phi$ and $\Psi_i$ and the same volume
entropy-production density $\Dcal$.

These inherited properties determine the adapted local entropy balance.
For a sufficiently smooth solution, contracting Eq.~\eqref{eq:cns}
with $\bWT^{\transpose}$, using entropy-pair compatibility, and applying
the gradient identity in Eq.~\eqref{eq:adapted-entropy-structure} gives
\begin{equation}
  \p_t\ST
  +
  \p_i\FIT{i}
  =
  \p_i
  \left(
    \bWT^{\transpose}\bFV{i}
  \right)
  -
  \Dcal.
  \label{eq:local-adapted-entropy}
\end{equation}
Integrating over $\Omega$ gives
\begin{equation}
  \frac{\dd}{\dd t}
  \int_{\Omega}
  \ST\,\dd\Omega
  =
  \int_{\partial\Omega}
  \Bcal_{\ST}\,\dd\Gamma
  -
  \int_{\Omega}
  \Dcal\,\dd\Omega,
  \label{eq:global-adapted-entropy}
\end{equation}
where
\begin{equation*}
  F_{T_w,n}
  =
  n_i\FIT{i}
\end{equation*}
is the outward normal adapted inviscid entropy flux. Substituting the
affine entropy-variable shift and flux modification into the pointwise
boundary contribution yields
\begin{align}
  \Bcal_{\ST}
  &=
  \bWT^{\transpose}\bm F_n^{(V)}
  -
  F_{T_w,n}
  \notag\\
  &=
  \Bcal_S
  -
  \frac{F_{E,n}}{\Tw}
  \notag\\
  &=
  \frac{\qnormal}{T}
  +
  \frac{\bu\cdot\btau\bn-\qnormal}{\Tw}
  -
  \left(
    S+\frac{\rho E+p}{\Tw}
  \right)u_n.
  \label{eq:adapted-boundary-contribution}
\end{align}
The second equality identifies the boundary term of the adapted entropy
pair with the balance combination in
Eq.~\eqref{eq:canonical-energy-wall-cancellation}. Thus
Lemma~\ref{lem:affine-entropy} supplies the unchanged convexity and volume
entropy production, while the stationary isothermal-wall identities
established in Section~\ref{sec:canonical-wall} give the cancellation
of $\Bcal_{\ST}$ on $\wall$.

\subsection{Boundary entropy conservation at an isothermal wall}
\label{subsec:isothermal-wall-adapted}

For a specified entropy pair, a wall is called boundary-entropy-conservative
when its pointwise contribution to the corresponding continuous entropy
balance vanishes for the exact wall data. It is called boundary-entropy-stable
when that contribution is nonpositive. These terms refer only to the boundary
contribution. The viscous and heat-conduction mechanisms continue to produce
the nonnegative volume entropy dissipation $\Dcal$.

\begin{theorem}[Boundary entropy conservation at an isothermal wall]
\label{thm:isothermal-wall-adapted}
Let $\Tw>0$ be spatially and temporally constant. For a sufficiently smooth
solution of \eqref{eq:cns}, a stationary no-slip wall satisfying
\eqref{eq:isothermal-wall-data} is boundary-entropy-conservative with respect
to the adapted entropy pair $(\ST,\FIT{i})$. More precisely,
\begin{equation}
  \left.
  \Bcal_{\ST}
  \right|_{\wall}
  =
  0.
  \label{eq:zero-adapted-wall-contribution}
\end{equation}
\end{theorem}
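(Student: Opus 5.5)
The plan is to read the result directly off the decomposition of the adapted boundary contribution in Eq.~\eqref{eq:adapted-boundary-contribution}. That display shows $\Bcal_{\ST}=\Bcal_S-F_{E,n}/\Tw$ pointwise on $\partial\Omega$. The identity rests on Lemma~\ref{lem:affine-entropy} with $\bm a=\be_E/\Tw$, where constancy of $\Tw$ is essential: it gives $\bWT=\bW+\be_E/\Tw$ and $\FIT{i}=\FI{i}+\be_E^{\transpose}\bFI{i}/\Tw$. Hence
\[
\bWT^{\transpose}\bm F_n^{(V)}-F_{T_w,n}
=
\Bcal_S
+
\frac{\be_E^{\transpose}\bigl(\bm F_n^{(V)}-\bm F_n^{(I)}\bigr)}{\Tw}.
\]
The energy component of $\bm F_n^{(I)}-\bm F_n^{(V)}$ is exactly $F_{E,n}$ from Eq.~\eqref{eq:net-normal-energy-flux}. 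I would briefly re-verify this algebra so the proof does not depend on the reader trusting the display.

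Next, restrict this pointwise identity to $\wall$ and invoke Theorem~\ref{thm:canonical-isothermal-wall-fluxes}. That theorem gives $\Bcal_S|_{\wall}=\qnormal/\Tw$ and $F_{E,n}|_{\wall}=\qnormal$ for a smooth solution satisfying \eqref{eq:isothermal-wall-data}. Substituting yields $\qnormal/\Tw-\qnormal/\Tw=0$, which is precisely Eq.~\eqref{eq:canonical-energy-wall-cancellation}.

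As a cross-check, I would also evaluate the explicit third line of Eq.~\eqref{eq:adapted-boundary-contribution} directly. Setting $u_n=0$ removes the advective terms, $\bu=\bm0$ removes $\bu\cdot\btau\bn$, and $T=\Tw$ makes $\qnormal/T-\qnormal/\Tw=0$. Equivalently, the energy entry of $\bWT$ in Eq.~\eqref{eq:adapted-entropy-variables} vanishes at $T=\Tw$ and the momentum entries vanish at $\bu=\bm0$. Then $\bWT^{\transpose}\bm F_n^{(V)}=0$, since the mass component of $\bm F_n^{(V)}$ is zero, and $F_{T_w,n}=(S+(\rho E+p)/\Tw)u_n=0$.

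No genuine obstacle is expected. The only point needing care is to note that the cancellation uses the full no-slip condition, not only impermeability, to eliminate the viscous work term. It also uses smoothness of the solution up to $\wall$, so that traces of $\btau$ and $\grad T$ exist and $\qnormal$ is finite, even though $\qnormal$ itself is undetermined by the data.
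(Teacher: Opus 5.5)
Your proposal is correct and essentially matches the paper's proof. The paper evaluates the explicit third line of Eq.~\eqref{eq:adapted-boundary-contribution} with $\bu=\bm0$ and $u_n=0$ to obtain $\qnormal(1/T-1/\Tw)$, which vanishes at $T=\Tw$, and then remarks that the same result follows from Eq.~\eqref{eq:canonical-energy-wall-cancellation} and the identity $\Bcal_{\ST}=\Bcal_S-F_{E,n}/\Tw$; you present these same two routes in reverse order, and your re-verification of the affine-shift algebra is accurate.
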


\begin{proof}
At a stationary no-slip wall, $\bu=\bm{0}$ and $u_n=0$. Therefore,
\eqref{eq:adapted-boundary-contribution} reduces to
\begin{equation*}
  \left.
  \Bcal_{\ST}
  \right|_{\wall}
  =
  \qnormal
  \left(
    \frac{1}{T}
    -
    \frac{1}{\Tw}
  \right).
\end{equation*}
The isothermal condition $T=\Tw$ gives
\eqref{eq:zero-adapted-wall-contribution}. The same result follows
from \eqref{eq:canonical-energy-wall-cancellation} and the second equality
in \eqref{eq:adapted-boundary-contribution}.
\end{proof}

\begin{corollary}[Global adapted-entropy balance and estimate]
\label{cor:global-adapted-entropy}
Let the solution be sufficiently smooth, and let $\wall$ satisfy the
assumptions of Theorem~\ref{thm:isothermal-wall-adapted}. Then
\begin{equation}
  \frac{\dd}{\dd t}
  \int_{\Omega}
  \ST\,\dd\Omega
  +
  \int_{\Omega}
  \Dcal\,\dd\Omega
  =
  \int_{\partial\Omega\setminus\wall}
  \Bcal_{\ST}\,\dd\Gamma.
  \label{eq:global-adapted-wall-identity}
\end{equation}
If the remaining boundary portions satisfy
$\Bcal_{\ST}\leq0$, then
\begin{equation}
  \int_{\Omega}
  \ST\bigl(\bQ(\bm x,t)\bigr)\,\dd\Omega
  +
  \int_{0}^{t}
  \int_{\Omega}
  \Dcal\,\dd\Omega\,\dd\tau
  \leq
  \int_{\Omega}
  \ST\bigl(\bQ_0(\bm x)\bigr)\,\dd\Omega.
  \label{eq:global-adapted-dissipation-identity}
\end{equation}
If $\Bcal_{\ST}=0$ on $\partial\Omega\setminus\wall$, then the
inequality in
\eqref{eq:global-adapted-dissipation-identity} is an equality.
\end{corollary}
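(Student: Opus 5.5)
The argument starts from the integrated adapted balance \eqref{eq:global-adapted-entropy}, which holds for any sufficiently smooth solution. It was obtained by contracting Eq.~\eqref{eq:cns} with $\bWT^{\transpose}$ to get the local identity \eqref{eq:local-adapted-entropy}, and then applying the divergence theorem to $\p_i(\bWT^{\transpose}\bFV{i}-\FIT{i})$ over $\Omega$. The smoothness assumption and the values in $\mathcal A$ justify the chain rule and the boundary traces.

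The first step is to split the boundary integral as $\partial\Omega=\wall\cup(\partial\Omega\setminus\wall)$. Since $\wall$ and its complement overlap at most on a set of surface measure zero, the integral splits additively. By Theorem~\ref{thm:isothermal-wall-adapted}, $\Bcal_{\ST}=0$ pointwise on $\wall$, so the wall integral vanishes. Moving $\int_\Omega\Dcal\,\dd\Omega$ to the left-hand side then gives exactly \eqref{eq:global-adapted-wall-identity}.

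The second step is the estimate. Under the hypothesis $\Bcal_{\ST}\le 0$ on $\partial\Omega\setminus\wall$, the right-hand side of \eqref{eq:global-adapted-wall-identity} is nonpositive for every $\tau\in(0,t]$. I integrate the identity in time over $[0,t]$. The fundamental theorem of calculus applies because $\int_\Omega\ST\,\dd\Omega$ is continuously differentiable in time for a classical solution on the closure of $\Omega\times[0,t_f]$. Using $\bQ(\cdot,0)=\bQ_0$ from \eqref{eq:cns-initial-condition}, this yields
\begin{equation*}
  \int_\Omega \ST\bigl(\bQ(\bm x,t)\bigr)\,\dd\Omega
  -\int_\Omega \ST\bigl(\bQ_0(\bm x)\bigr)\,\dd\Omega
  +\int_0^t\!\!\int_\Omega\Dcal\,\dd\Omega\,\dd\tau
  =\int_0^t\!\!\int_{\partial\Omega\setminus\wall}\Bcal_{\ST}\,\dd\Gamma\,\dd\tau\le 0 .
\end{equation*}
This is \eqref{eq:global-adapted-dissipation-identity}. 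In the case $\Bcal_{\ST}=0$ on the complement, the right-hand side is identically zero and the same computation gives equality.

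I expect no substantive obstacle. The only point needing care is regularity: the boundary traces and the time integrability of $\int_\Omega\Dcal\,\dd\Omega$ must be justified, and the initial value must be identified via continuity of the solution at $t=0$. All of this is covered by the standing assumption of sufficiently smooth classical solutions with values in $\mathcal A$ on a bounded domain with smooth boundary. I would also note that $\Dcal\ge0$ from \eqref{eq:local-canonical-entropy}, unchanged by Lemma~\ref{lem:affine-entropy}, makes the dissipation term nonnegative. The estimate does not require this, but it is what makes it meaningful.
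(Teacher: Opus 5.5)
Your proposal is correct and follows the same route the paper takes implicitly: split the boundary term in \eqref{eq:global-adapted-entropy}, drop the wall portion using Theorem~\ref{thm:isothermal-wall-adapted}, and integrate in time from the initial state $\bQ_0$. One minor point: $\wall$ and $\partial\Omega\setminus\wall$ are disjoint by definition, so the boundary integral splits additively without any measure-zero overlap argument.
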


\begin{remark}[Adiabaticity is not required]
\label{rem:adapted-not-adiabatic}
Theorem~\ref{thm:isothermal-wall-adapted} does not require
$\qnormal=0$. The fluid may exchange canonical mathematical entropy with the
wall through the contribution $\qnormal/\Tw$. The adapted boundary
contribution vanishes because this exchange is cancelled exactly by the net
outward total-energy flux divided by $\Tw$. Hence, the wall is
boundary-entropy-conservative with respect to $\ST$, even when conductive heat
crosses
it.
\end{remark}

\subsection{Entropy stability with respect to
  \texorpdfstring{$S_{T_w}$}{S(Tw)} and a conditional
  \texorpdfstring{$L_2$}{L2}-type a priori bound}
\label{subsec:entropy-l2-bound}

Under the remaining-boundary condition of
Corollary~\ref{cor:global-adapted-entropy}, the global estimate
\eqref{eq:global-adapted-dissipation-identity} yields a conditional
bound about the prescribed initial field $\bQ_0$. For an admissible
solution, convexity of $\mathcal A$ ensures that the segment joining
$\bQ_0(\bm x)$ and $\bQ(\bm x,t)$ remains admissible. For each fixed
$t\in[0,t_f]$, define
\begin{equation}
  \underline{\lambda}_{T_w}(t)
  =
  \operatorname*{ess\,inf}_{\bm x\in\Omega}
  \inf_{\theta\in[0,1]}
  \lambda_{\min}
  \left[
    \nabla_{\bQ}^{2}\ST
    \left(
      (1-\theta)\bQ_0(\bm x)
      +
      \theta\bQ(\bm x,t)
    \right)
  \right].
  \label{eq:adapted-entropy-minimum-hessian-eigenvalue}
\end{equation}
Here, $\lambda_{\min}$ denotes the smallest eigenvalue of its symmetric
matrix argument.
Because
\begin{equation*}
  \nabla_{\bQ}^{2}\ST
  =
  \nabla_{\bQ}^{2}S,
\end{equation*}
the canonical and adapted entropies have the same segmentwise convexity
constant. Assume that $\underline{\lambda}_{T_w}(t)>0$ and that
$\bQ_0,\bWT(\bQ_0)\in L_2(\Omega;\R^{d+2})$. Taylor's theorem,
Cauchy--Schwarz, and the triangle inequality then give
\begin{equation}
  \left\|
    \bQ(\cdot,t)
  \right\|_{L_2(\Omega)}
  \leq
  \left\|
    \bQ_0
  \right\|_{L_2(\Omega)}
  +
  \frac{2}{\underline{\lambda}_{T_w}(t)}
  \left\|
    \bWT(\bQ_0)
  \right\|_{L_2(\Omega)}.
  \label{eq:continuous-entropy-l2-bound}
\end{equation}
The derivation is given in \ref{app:entropy-l2-derivation}. A time-uniform
bound follows if
$\inf_{0\leq t\leq t_f}\underline{\lambda}_{T_w}(t)>0$. 
The passage from entropy control to an $L_2$ estimate relies on uniform
convexity. See \cite[Chapter~V]{Dafermos2016} for the general framework
and \cite{SvardCarpenterParsani2018} for the compressible
Navier--Stokes wall-boundary setting. The estimate does not establish
positivity preservation, global well-posedness, or regularity.

Note that the relative entropies generated by $\ST$ and $S$ are identical, as
shown in \ref{app:relative-entropy-invariance}. However, this separate identity
is not needed for Eq.~\eqref{eq:continuous-entropy-l2-bound}.

\subsection{Ballistic free-energy interpretation}
\label{subsec:ballistic-free-energy}

For the fixed reservoir temperature $\Tw>0$, define the thermodynamic
ballistic free-energy density
\begin{equation}
  \mathcal F_{\Tw}^{\mathrm{bal}}(\rho,T)
   =
  \rho e(\rho,T)
  -
  \Tw\rho s(\rho,T).
  \label{eq:ballistic-free-energy-density}
\end{equation}
This potential is used in the mathematical analysis of compressible,
viscous, and heat-conducting fluids
\cite{Feireisl2012,ChaudhuriFeireisl2022}. The associated total
ballistic-energy density also includes kinetic energy:
\begin{align}
  \mathcal E_{\Tw}^{\mathrm{bal}}(\bQ)
  & =
  \frac12\rho\lvert\bu\rvert^2
  +
  \mathcal F_{\Tw}^{\mathrm{bal}}(\rho,T)
  \notag\\
  &=
  \rho E-\Tw\rho s
  \notag\\
  &=
  \Tw\ST(\bQ).
  \label{eq:total-ballistic-energy-density}
\end{align}
At $T=\Tw$, the thermodynamic potential reduces to
\begin{equation}
  \left.
  \mathcal F_{\Tw}^{\mathrm{bal}}(\rho,T)
  \right|_{T=\Tw}
  =
  \rho\bigl(e-Ts\bigr),
  \label{eq:ballistic-helmholtz-relation}
\end{equation}
which is the usual Helmholtz free-energy density \cite{Callen1985}. Note that away from
$T=\Tw$, the parameter $\Tw$ remains the prescribed reservoir
temperature, not the local fluid temperature.

Since $\mathcal E_{\Tw}^{\mathrm{bal}}=\Tw\ST$ with $\Tw>0$, the total
ballistic energy inherits strict convexity in $\bQ$ on $\mathcal A$.
Its balance is obtained by multiplying
Eq.~\eqref{eq:global-adapted-wall-identity} by $\Tw$. The Hessian and
balance identities are given in
\ref{app:ballistic-energy-identities}. At a stationary no-slip wall,
the energy and scaled canonical-entropy balances contribute
$-\qnormal$ and $\Tw\qnormal/T$, respectively, to the integral rates.
They cancel at $T=\Tw$, eliminating the solution-dependent conductive
heat flux from the wall contribution. This is the constant-reservoir
specialization of the ballistic-energy mechanism for Dirichlet
temperature data \cite{ChaudhuriFeireisl2022}.

Changing the entropy zero by $s\mapsto s+C_s$, with constant $C_s$,
adds the affine mass-density term $-\Tw C_s\rho$ to
$\mathcal E_{\Tw}^{\mathrm{bal}}$, and it leaves the Hessian, volume
entropy-production mechanism, and isothermal-wall cancellation unchanged.
The corresponding relative-entropy invariance is given in
\ref{app:relative-entropy-invariance}.

These continuous identities provide the entropy and total-energy
conditions that the two semi-discrete wall formulations in the next
section must reproduce (i.e., mimic) or bound.

\section{Semi-discrete DG--SBP formulation and two isothermal-wall SAT
formulations}
\label{sec:semidiscrete-wall}

In this section, we develop the CR--SAT and FEE--SAT isothermal-wall
formulations within a common semi-discrete nodal DG--SBP discretization
of the compressible Navier--Stokes equations. Entropy stability analysis
establishes that both formulations reproduce the adapted wall balance
derived in Section~\ref{sec:adapted-entropy}, with zero wall contribution
without wall dissipation and a nonpositive contribution when it is
enabled. These identities hold algebraically at the semi-discrete level and are
verified numerically up to floating-point roundoff in the numerical
results of Section~\ref{sec:results}.

Both wall formulations share the discretization developed in
Subsections~\ref{subsec:tensor-product-sbp}--%
\ref{subsec:interface-2015-coupling}. It combines tensor-product SBP
operators, the entropy-conservative inviscid flux differencing of
Fisher and Carpenter~\cite{FisherCarpenter2013,CarpenterEtAl2014},
and local discontinuous Galerkin (LDG) auxiliary-gradient equations
\cite{CockburnShu1998LDG,ParsaniInterface2015}. The interior-interface
coupling is entropy-conservative when the optional inviscid
dissipation and interior penalty (IP) are omitted.

Subsection~\ref{subsec:isothermal-wall-common-data} defines the shared
wall data and residual within the coupled conservative and
auxiliary-gradient SAT architecture of
Refs.~\cite{ParsaniCarpenterNielsen2015,DalcinEtAl2019}.
Subsection~\ref{subsec:cr-sat} constructs the CR--SAT correction acting
on the complete wall-data residual.
Subsection~\ref{subsec:fee-sat} constructs the FEE--SAT correction,
which additionally equates the complete numerical outward total-energy
flux to the one-sided numerical Fourier heat flux at finite spatial
resolution.
Finally, Subsection~\ref{subsec:two-sat-global-balances} assembles the
source-free global entropy and total-energy balances and compares the
two formulations.

\subsection{Semi-discrete DG--SBP setting}
\label{subsec:tensor-product-sbp}

This subsection introduces the tensor-product SBP operators, their
vector extensions, and the quadrature inner products used in the
semi-discrete DG--SBP formulation on conforming Cartesian affine
elements.

Let $\mathcal T_h$ be a conforming partition of $\Omega$ into Cartesian
affine tensor-product elements $K$, mapped coordinate-wise from
$\widehat K=[-1,1]^d$. In each reference direction, a polynomial of degree
$\psol\geq1$ is collocated at $\Nsol=\psol+1$
Legendre--Gauss--Lobatto (LGL) nodes. Let $\mathsf P_{\Nsol}$ be the
positive diagonal weight matrix, $\mathsf D_{\Nsol}$ the one-dimensional
differentiation matrix, and
$\mathsf Q_{\Nsol}=\mathsf P_{\Nsol}\mathsf D_{\Nsol}$. Then
\begin{equation}
  \mathsf Q_{\Nsol}+\mathsf Q_{\Nsol}^{\transpose}
  =\mathsf B_{\Nsol},
  \qquad
  \mathsf B_{\Nsol}=\operatorname{diag}(-1,0,\ldots,0,1),
  \qquad
  \mathsf D_{\Nsol}\bm1_{\Nsol}=\bm0,
  \label{eq:one-dimensional-sbp-property}
\end{equation}
where $\bm1_{\Nsol}\in\R^{\Nsol}$ is the vector of ones. Background on
SBP operators and their nodal DG correspondence is given in
Refs.~\cite{SvardNordstrom2014,DelReyFernandezHickenZingg2014,Gassner2013,CarpenterEtAl2014,CarpenterParsaniFisherNielsen2015}.

In a tensor-product element, there are $N_K=\Nsol^d$ volume nodes and $N_f=\Nsol^{d-1}$ nodes on each
face. The scalar physical norm and Cartesian derivative matrices are
$\mathsf P_K,\mathsf D_{i,K}\in\R^{N_K\times N_K}$, $i=1,\ldots,d$.
For $f\subset\partial K$, $\mathsf R_f\in\R^{N_f\times N_K}$ restricts
volume values to face nodes, and $\mathsf P_f\in\R^{N_f\times N_f}$ is
the physical face-quadrature matrix. Let $\bm n_{i,f}\in\R^{N_f}$ collect
the $i$th Cartesian components of the unit normal outward from $K$ and
set $\mathsf N_{i,f}=\operatorname{diag}(\bm n_{i,f})$.
The element index is suppressed on these face operators. They satisfy
\begin{equation}
  \mathsf P_K\mathsf D_{i,K}
  +\mathsf D_{i,K}^{\transpose}\mathsf P_K
  =
  \sum_{f\subset\partial K}
  \mathsf R_f^{\transpose}\mathsf P_f
  \mathsf N_{i,f}\mathsf R_f,
  \qquad
  \mathsf D_{i,K}\bm1_K=\bm0,
  \label{eq:multidimensional-sbp-property}
\end{equation}
where $\bm1_K\in\R^{N_K}$ is the element vector of ones.
\ref{app:sbp-operator-construction} gives the affine map, metric and
Jacobian factors, explicit tensor-product and face operators, and endpoint
norm scaling.

For $m=d+2$, let $\bm I_m$ be the $m\times m$ identity and $\otimes$ the
Kronecker product. The vector extensions are
\begin{equation*}
  \mathbb P_K=\mathsf P_K\otimes\bm I_m,
  \qquad
  \mathbb D_{i,K}=\mathsf D_{i,K}\otimes\bm I_m,
  \qquad
  \mathbb P_K,\mathbb D_{i,K}
  \in\R^{mN_K\times mN_K}.
\end{equation*}
For $a,b\in\R^{N_K}$, $a_f,b_f\in\R^{N_f}$,
$\bm a,\bm b\in\R^{mN_K}$, and
$\bm a_f,\bm b_f\in\R^{mN_f}$, define
\begin{align*}
  \langle a,b\rangle_{K,h}
  &=a^{\transpose}\mathsf P_Kb,
  &
  \langle a_f,b_f\rangle_{f,h}
  &=a_f^{\transpose}\mathsf P_fb_f,
  \\
  \langle\bm a,\bm b\rangle_{K,h}^{(m)}
  &=\bm a^{\transpose}\mathbb P_K\bm b,
  &
  \langle\bm a_f,\bm b_f\rangle_{f,h}^{(m)}
  &=\bm a_f^{\transpose}
    (\mathsf P_f\otimes\bm I_m)\bm b_f.
\end{align*}
In quadrature inner products of scalar fields, $1$ denotes the vector
of ones on the corresponding element or face.

Note that the analysis is restricted to conforming Cartesian affine elements.
Curvilinear extensions require entropy-compatible metric approximations
satisfying the discrete metric identities and geometric conservation law
\cite{CreanEtAl2018,
DelReyFernandezEtAl2020CurvilinearTM}. Entropy-stable $p$- and
$h/p$-nonconforming extensions require SBP-compatible, conservative,
entropy-stable interface operators
\cite{FriedrichEtAl2018,DelReyFernandezEtAl2020PNonconformingNS,
DelReyFernandezEtAl2020HP}. Under these conditions, the local CR--SAT and
FEE--SAT wall identities retain the same algebraic form presented herein.

\subsection{Inviscid entropy-conservative flux differencing}
\label{subsec:inviscid-flux-differencing}

This subsection presents the entropy-conservative SBP flux-differencing
construction of the inviscid volume operator and explains how the
two-point flux conditions and the SBP property reduce its
quadrature-weighted entropy contraction to element-face contributions.

The element vector $\bm q_K$ collects admissible conservative states
at the volume nodes:
\begin{equation*}
  \bm q_K
  =
  \left(
    \bQ_1^{\transpose},\ldots,\bQ_{N_K}^{\transpose}
  \right)^{\transpose}
  \in\R^{mN_K},
  \qquad
  \bQ_a
  =
  \left(
    \rho_a,(\rho u_1)_a,\ldots,(\rho u_d)_a,(\rho E)_a
  \right)^{\transpose}
  \in\mathcal A\subset\R^m,
\end{equation*}
where $a=1,\ldots,N_K$ and $\mathcal A$ is defined in
Section~\ref{sec:equations}.

For $i=1,\ldots,d$, let
$\bm F_i^{\mathrm{ec}}:\mathcal A\times\mathcal A\to\R^m$
be a two-point inviscid flux, where $\mathrm{ec}$ denotes entropy-conservative.
For admissible states $\bQ_L,\bQ_R\in\mathcal A$, the
labels $L,R$ identify flux arguments rather than physical interface
traces. We require symmetry and consistency:
\begin{equation*}
  \bm F_i^{\mathrm{ec}}(\bQ_L,\bQ_R)
  =
  \bm F_i^{\mathrm{ec}}(\bQ_R,\bQ_L),
  \qquad
  \bm F_i^{\mathrm{ec}}(\bQ,\bQ)
  =
  \bFI{i}(\bQ).
\end{equation*}
With $\bW_L=\bW(\bQ_L)$ and $\bW_R=\bW(\bQ_R)$, the flux must also
satisfy Tadmor's entropy-conservation identity
\cite{Tadmor1987,Tadmor2003}:
\begin{equation}
  (\bW_R-\bW_L)^{\transpose}
  \bm F_i^{\mathrm{ec}}(\bQ_L,\bQ_R)
  =
  \Psi_i(\bQ_R)-\Psi_i(\bQ_L),
  \label{eq:tadmor-identity-section5}
\end{equation}
where $\Psi_i$ is the entropy-flux potential defined in
Eq.~\eqref{eq:entropy-potentials}.

We apply the entropy-conservative SBP flux-differencing
construction of Fisher and Carpenter~\cite{FisherCarpenter2013} in each
Cartesian direction, using the tensor-product differentiation matrices
of Subsection~\ref{subsec:tensor-product-sbp}, as in the
spectral-collocation formulation of Ref.~\cite{CarpenterEtAl2014}.
The resulting directional and total inviscid volume operators are
\begin{equation}
  \left[
    \mathcal V_{i,K,h}^{(I),\mathrm{ec}}(\bm q_K)
  \right]_a
  =
  2\sum_{b=1}^{N_K}
  (\mathsf D_{i,K})_{ab}
  \bm F_i^{\mathrm{ec}}(\bQ_a,\bQ_b),
  \qquad
  \mathcal V_{K,h}^{(I)}(\bm q_K)
  =
  \sum_{i=1}^{d}
  \mathcal V_{i,K,h}^{(I),\mathrm{ec}}(\bm q_K).
  \label{eq:entropy-conservative-volume-flux-differencing}
\end{equation}
The $a$th block of the directional operator has the component ordering
of $\bQ_a$ and approximates $\partial_i\bFI{i}$ without summation over
$i$. The total operator belongs to $\R^{mN_K}$ and approximates the
inviscid flux divergence~\cite{DelReyFernandezEtAl2020HP}.
The indices $a,b$ enumerate volume nodes,
$(I)$ identifies the inviscid contribution, and $K,h$ identify the
element and spatial discretization. The tensor-product entries and
coordinate-line evaluation are given in~\ref{app:sbp-operator-construction}.

Contract each nodal block of
Eq.~\eqref{eq:entropy-conservative-volume-flux-differencing} with the
adapted entropy variables at that node and sum with the volume-quadrature
weights. Symmetry pairs nodal terms, and consistency recovers the physical
flux at equal states. By Lemma~\ref{lem:affine-entropy},
Eq.~\eqref{eq:tadmor-identity-section5} also holds for the adapted variables
and converts the paired contractions into entropy-potential differences.
The SBP identity then leaves only adapted inviscid entropy-flux terms on
element faces, with no inviscid volume entropy production
\cite{FisherCarpenter2013,CarpenterEtAl2014}.
This is the discrete telescoping property. Any two-point flux satisfying
these conditions may be used.

Equation~\eqref{eq:entropy-conservative-volume-flux-differencing} defines
only the inviscid volume contribution. The conservative and auxiliary-gradient
equations follow in Subsection~\ref{subsec:first-order-viscous-sbp}, and
Subsection~\ref{subsec:interface-2015-coupling} specifies the interior-face
SAT data.

\subsection{Viscous auxiliary-gradient discretization and semi-discrete
element equations}
\label{subsec:first-order-viscous-sbp}

This subsection introduces the LDG approximation of the entropy-variable
gradients, defines the nodal viscous fluxes, and formulates the coupled
semi-discrete conservative and auxiliary-gradient element equations
with face SAT terms.

The LDG auxiliary equations approximate the Cartesian derivatives
$\p_i\bW$, $i=1,\ldots,d$, following Ref.~\cite{CockburnShu1998LDG}.
Their DG--SBP--SAT formulation follows
Refs.~\cite{CarpenterEtAl2014,ParsaniInterface2015}. The canonical and
wall-temperature-adapted nodal entropy-variable vectors are
\begin{equation*}
  \bm w_K
  =
  \left(
    \bW(\bQ_1)^{\transpose},\ldots,
    \bW(\bQ_{N_K})^{\transpose}
  \right)^{\transpose}
  \in\R^{mN_K}
\end{equation*}
and
\begin{equation*}
  \bm w_{T_w,K}
  =\bm w_K+\bm1_K\otimes\frac{\be_E}{T_w}
  \in\R^{mN_K},
\end{equation*}
where $\be_E=(0,\ldots,0,1)^{\transpose}\in\R^m$ selects the
conservative total-energy component $\rho E$. The $a$th adapted block is
$\bW(\bQ_a)+\be_E/T_w$. Since $T_w$ is constant and
$\mathsf D_{i,K}\bm1_K=\bm0$,
\begin{equation*}
  \mathbb D_{i,K}\bm w_{T_w,K}
  =\mathbb D_{i,K}\bm w_K.
\end{equation*}
Thus, $\bm\theta_{i,K}\in\R^{mN_K}$ approximates the nodal values of
both $\p_i\bW$ and $\p_i\bWT$.

At each volume node $a$, we evaluate the single-state viscous coefficient
matrices $\bm C_{ij}(\bQ_a)\in\R^{m\times m}$ defined in
Section~\ref{sec:equations}. Their block-diagonal assembly and the
corresponding nodal viscous fluxes are given by
\begin{align}
  [\bm C_{ij}]_K
  &=
  \operatorname{diag}
  \left(
    \bm C_{ij}(\bQ_1),\ldots,\bm C_{ij}(\bQ_{N_K})
  \right)
  \in\R^{mN_K\times mN_K},
  \label{eq:block-diagonal-viscous-coefficients}
  \\
  \bm f_{i,K}^{(V)}
  &=[\bm C_{ij}]_K\bm\theta_{j,K}
  \in\R^{mN_K}.
  \label{eq:discrete-viscous-flux}
\end{align}
The dimension-independent form of $\bm C_{ij}$ is given in
\ref{app:viscous-coefficient-matrices}.

For each face $f\subset\partial K$, define the quadrature-weighted
injection
\begin{equation}
  \mathbb E_{f,K}
  =
  \left(
    \mathsf R_f^{\transpose}\mathsf P_f
  \right)
  \otimes\bm I_m
  \in\R^{mN_K\times mN_f}.
  \label{eq:face-lifting-matrix}
\end{equation}
For $\bm z_f\in\R^{mN_f}$, $\mathbb E_{f,K}\bm z_f$ injects the
physical face-quadrature-weighted data into the element vector. Its
mass-inverted counterpart $\mathbb P_K^{-1}\mathbb E_{f,K}$ is the
lifting into the volume contribution. Because $\mathsf R_f^{\transpose}$
selects face LGL nodes and $\mathbb P_K^{-1}$ is diagonal, the lifting
vanishes at all other volume nodes.

Let $\bm g_{f,K}^{q},\bm g_{i,f,K}^{\theta}\in\R^{mN_f}$ denote the
conservative and auxiliary-gradient face SAT data, respectively. With the
inviscid volume contribution in
Eq.~\eqref{eq:entropy-conservative-volume-flux-differencing}, the element
equations are
\begin{align}
  \frac{\dd\bm q_K}{\dd t}
  +\mathcal V_{K,h}^{(I)}(\bm q_K)
  -\mathbb D_{i,K}\bm f_{i,K}^{(V)}
  &=
  \mathbb P_K^{-1}
  \sum_{f\subset\partial K}
  \mathbb E_{f,K}\bm g_{f,K}^{q},
  \label{eq:semidiscrete-conservative-equation}
  \\
  \bm\theta_{i,K}-\mathbb D_{i,K}\bm w_K
  &=
  \mathbb P_K^{-1}
  \sum_{f\subset\partial K}
  \mathbb E_{f,K}\bm g_{i,f,K}^{\theta},
  \qquad i=1,\ldots,d.
  \label{eq:discrete-entropy-gradient}
\end{align}
To complete Eqs.~\eqref{eq:semidiscrete-conservative-equation}
and~\eqref{eq:discrete-entropy-gradient}, we must specify the face SAT
vectors $\bm g_{f,K}^{q}$ and $\bm g_{i,f,K}^{\theta}$.
Their definitions determine the coupling between neighboring elements
and the imposition of boundary conditions.
Subsection~\ref{subsec:interface-2015-coupling} specifies these vectors
on interior faces.
Subsections~\ref{subsec:isothermal-wall-common-data}--\ref{subsec:fee-sat}
introduce the common isothermal-wall data and define the corresponding
wall SAT vectors for the CR--SAT and FEE--SAT formulations.

\subsection{Conforming interior-interface SAT coupling}
\label{subsec:interface-2015-coupling}

To complete the element equations in
Subsection~\ref{subsec:first-order-viscous-sbp}, we now specify the SAT
vectors $\bm g_{f,K}^{q}$ and $\bm g_{i,f,K}^{\theta}$ on conforming
interior faces and derive their combined entropy contribution.
Following Refs.~\cite{CarpenterEtAl2014,ParsaniInterface2015}, we use a
common entropy-conservative inviscid flux with optional entropy-stable dissipation,
paired LDG traces, and an optional IP penalty. The paired LDG coupling
is entropy-conservative at the interface, while the IP penalty supplies
additional entropy dissipation.

For a face $f$ shared by elements $K^-$ and $K^+$, fix a unit normal
$\bn$ from $K^-$ to $K^+$. Superscripts $-$ and $+$ denote traces from
these elements at matching physical face nodes in a common ordering.
The face label $f$ is suppressed on one-sided traces. A subscript $n$
denotes contraction with the common normal $\bn$, and a hat denotes a
numerical trace or flux. The formulas are nodewise, with vectors in
$\R^m$ and matrices in $\R^{m\times m}$. In face-quadrature products,
these symbols denote stacked vectors in $\R^{mN_f}$ or block-diagonal
matrices in $\R^{mN_f\times mN_f}$. Define
\begin{equation*}
  \bm F_n^{(I),\pm}
  =n_i\bFI{i}(\bQ^{\pm}),
  \qquad
  \bm F_n^{(V),\pm}
  =n_i\bm C_{ij}(\bQ^{\pm})\bm\theta_j^{\pm},
  \qquad
  \bm d_f=\bW^- -\bW^+.
\end{equation*}
Here, $\bm\theta_j^{\pm}$ are traces of the auxiliary gradients.
Both physical flux traces use the same normal $\bn$.

\paragraph{Inviscid numerical flux}
Define the normal entropy-conservative flux and entropy-flux potential by~\cite{CarpenterEtAl2014}
\begin{equation*}
  \bm F_n^{\mathrm{ec}}(\bQ^-,\bQ^+)
  =n_i\bm F_i^{\mathrm{ec}}(\bQ^-,\bQ^+),
  \qquad
  \Psi_n(\bQ)=n_i\Psi_i(\bQ).
\end{equation*}
For $\bm D_f\in\R^{m\times m}$ satisfying $\bm D_f\succeq\bm0$, set
\begin{equation}
  \widehat{\bm F}_{n,f}^{(I)}
  =\bm F_n^{\mathrm{ec}}(\bQ^-,\bQ^+)
   -\frac{1}{2}\bm D_f(\bW^+-\bW^-).
  \label{eq:merriam-interface-flux}
\end{equation}
The choice $\bm D_f=\bm0$ recovers the entropy-conservative flux. A standard
entropy-scaled choice is
\begin{equation*}
  \bm D_f=\bm Y_f|\bm\Lambda_f|\bm Y_f^{\transpose}.
\end{equation*}
Here, $\bm\Lambda_f\in\R^{m\times m}$ is the diagonal eigenvalue
matrix of the normal inviscid flux Jacobian, and the columns of
$\bm Y_f\in\R^{m\times m}$ are its entropy-scaled right eigenvectors.
$|\bm\Lambda_f|$ contains the absolute characteristic speeds.
The eigensystem is evaluated at a consistent interface state, reducing to
$\bQ$ when $\bQ^-=\bQ^+=\bQ$. The normalization is given in
\ref{app:semidiscrete-entropy-identity}. This scaling follows
Merriam~\cite{Merriam1989} and its use in
Refs.~\cite{CarpenterEtAl2014,ParsaniInterface2015}. When Roe averaging is
used, the eigensystem is evaluated at the Roe-averaged state
\cite{Roe1981}.

\paragraph{Viscous numerical trace and flux}
Following Ref.~\cite{ParsaniInterface2015}, define the LDG trace with
bias parameter $\alpha\in[-1,1]$ by
\begin{equation}
  \widehat{\bW}_f
  =\frac{1}{2}
   \left[
     (1+\alpha)\bW^-+(1-\alpha)\bW^+
   \right].
  \label{eq:ldg-primal-trace}
\end{equation}
The nonnegative weights sum to one. The choices $\alpha=0$, $1$, and
$-1$ give the symmetric trace, $\bW^-$, and $\bW^+$, respectively.
Let $p_{n,f}^{\pm}>0$ be the physical normal-endpoint norm entries of
$K^\pm$ at $f$, as constructed in
Eq.~\eqref{eq:normal-endpoint-mass-entry}. The matching-normal-scaling
specialization assumes
\begin{equation*}
  p_{n,f}^-=p_{n,f}^+=p_{n,f},
\end{equation*}
and uses the normal viscous coefficient matrices and IP matrix
\begin{equation}
  \bm C_{nn}^{\pm}
  =n_i\bm C_{ij}(\bQ^{\pm})n_j,
  \qquad
  \bm L_f
  =-\frac{\beta_{\mathrm{int}}}{p_{n,f}}
   \frac{\bm C_{nn}^-+\bm C_{nn}^+}{2},
  \qquad
  \beta_{\mathrm{int}}\geq0.
  \label{eq:interface-ip-matrix}
\end{equation}
Since $\bm C_{nn}^{\pm}\succeq\bm0$, $\beta_{\mathrm{int}}\geq0$,
and $p_{n,f}>0$, the IP matrix satisfies $\bm L_f\preceq\bm0$.
The parameter $\beta_{\mathrm{int}}$ controls the penalty strength,
while $p_{n,f}^{-1}$ provides the inverse normal-mesh scaling
\cite{ParsaniInterface2015,DalcinEtAl2019}.
Setting $\beta_{\mathrm{int}}=0$ removes the IP term. For any
$\beta_{\mathrm{int}}\geq0$, the correction vanishes when the
entropy-variable traces coincide, $\bW^-=\bW^+$.
The numerical viscous flux uses complementary LDG weights:
\begin{equation}
  \widehat{\bm F}_{n,f}^{(V)}
  =\frac{1}{2}
   \left[
     (1-\alpha)\bm F_n^{(V),-}
     +(1+\alpha)\bm F_n^{(V),+}
   \right]
   +\frac{1}{2}\bm L_f\bm d_f.
  \label{eq:ldg-ip-viscous-trace}
\end{equation}
The LDG/IP
entropy contributions are given below and justified in
\ref{app:semidiscrete-entropy-identity}.

\paragraph{Element-local SAT data}
For $K\in\{K^-,K^+\}$, let
$\bn_K=(n_{1,K},\ldots,n_{d,K})^{\transpose}$ be its outward unit
normal, with $\bn_{K^-}=\bn$ and $\bn_{K^+}=-\bn$.
At each node of $f$, let $\bQ_{f,K}$, $\bW_{f,K}$, and
$\bm\theta_{j,f,K}\in\R^m$ denote the one-sided traces from $K$.
They are the nodewise blocks obtained by restricting the corresponding
element vectors with $\mathsf R_f\otimes\bm I_m$.
In particular, $\bW_{f,K^-}=\bW^-$ and $\bW_{f,K^+}=\bW^+$,
with analogous identifications for $\bQ$ and $\bm\theta_j$.
The physical outward fluxes on $f$ are
\begin{equation*}
  \bm F_{n,K}^{(I)}
  =n_{i,K}\bFI{i}(\bQ_{f,K}),
  \qquad
  \bm F_{n,K}^{(V)}
  =n_{i,K}\bm C_{ij}(\bQ_{f,K})\bm\theta_{j,f,K}.
\end{equation*}
For $\ell\in\{I,V\}$, the common-normal and element-outward fluxes
are related by
\begin{equation*}
  \begin{aligned}
    \bm F_{n,K^-}^{(\ell)}
    &=\bm F_n^{(\ell),-},
    &
    \bm F_{n,K^+}^{(\ell)}
    &=-\bm F_n^{(\ell),+},
    \\
    \widehat{\bm F}_{n,K^-}^{(\ell)}
    &=\widehat{\bm F}_{n,f}^{(\ell)},
    &
    \widehat{\bm F}_{n,K^+}^{(\ell)}
    &=-\widehat{\bm F}_{n,f}^{(\ell)}.
  \end{aligned}
\end{equation*}
The interior-face SAT data closing
Eqs.~\eqref{eq:semidiscrete-conservative-equation}
and~\eqref{eq:discrete-entropy-gradient} are
\begin{equation}
  \begin{aligned}
    \bm g_{i,f,K}^{\theta}
    &=n_{i,K}
      \left(
        \widehat{\bW}_f-\bW_{f,K}
      \right),
    \\
    \bm g_{f,K}^{q}
    &=\bm F_{n,K}^{(I)}
      -\widehat{\bm F}_{n,K}^{(I)}
      +\widehat{\bm F}_{n,K}^{(V)}
      -\bm F_{n,K}^{(V)}.
  \end{aligned}
  \label{eq:compact-interior-sat-data}
\end{equation}
The auxiliary SAT weakly couples $\bW_{f,K}$ to $\widehat{\bW}_f$.
The conservative SAT is the physical outward total flux
$\bm F_{n,K}^{(I)}-\bm F_{n,K}^{(V)}$ minus its numerical counterpart
$\widehat{\bm F}_{n,K}^{(I)}-\widehat{\bm F}_{n,K}^{(V)}$.
Together with the conservative SBP volume operator, it makes the element
balance use the common numerical flux. The numerical-flux contributions
cancel between neighboring elements because the face quadrature is
shared and the outward orientations are opposite.
Conservation of $\bQ$ alone does not imply entropy conservation, which
depends on the flux and trace choices as shown next.

\paragraph{Interface entropy contribution}
We combine the quadrature-weighted entropy contractions of the
conservative and auxiliary-gradient equations on $K^-$ and $K^+$
\cite{CarpenterEtAl2014,ParsaniInterface2015}, using the element identity
in Eq.~\eqref{app:eq:element-adapted-entropy-contraction}.
For the inviscid terms, the opposite outward normals produce trace and
potential differences. With $\bW^+-\bW^-=-\bm d_f$, Tadmor's identity
\cite{Tadmor2003} cancels the entropy-conservative part of
Eq.~\eqref{eq:merriam-interface-flux}, leaving
\begin{equation}
  (\bW^+-\bW^-)^{\transpose}
  \widehat{\bm F}_{n,f}^{(I)}
  -\left[
    \Psi_n(\bQ^+)-\Psi_n(\bQ^-)
   \right]
  =-\frac{1}{2}
    (\bW^+-\bW^-)^{\transpose}
    \bm D_f(\bW^+-\bW^-)
  \leq0.
  \label{eq:merriam-interface-entropy-identity}
\end{equation}
For the viscous terms, the auxiliary SAT contributions cancel the
complementary LDG weights in Eq.~\eqref{eq:ldg-ip-viscous-trace} for
every $\alpha\in[-1,1]$ \cite{ParsaniInterface2015}.
Therefore, the paired LDG coupling is entropy-conservative at the
interface, and only the IP contribution remains:
\begin{equation}
  \frac{1}{2}\bm d_f^{\transpose}\bm L_f\bm d_f
  \leq0.
  \label{eq:interface-ip-entropy-contribution}
\end{equation}
\ref{app:semidiscrete-entropy-identity} gives the trace-difference factors
and the sign change associated with the outward normal of $K^+$.
The sum of these nodewise contributions is nonpositive for
$\bm D_f\succeq\bm0$ and $\beta_{\mathrm{int}}\geq0$.
Positive face-quadrature weights preserve this sign after summation.
The contribution vanishes when $\bm D_f=\bm0$ and
$\beta_{\mathrm{int}}=0$, without eliminating the physical viscous
entropy production in the element volumes.

For constant $T_w$, Lemma~\ref{lem:affine-entropy} gives
\begin{equation}
  \bW_{T_w}^+-\bW_{T_w}^-
  =\bW^+-\bW^-,
  \qquad
  \p_i\bW_{T_w}=\p_i\bW,
  \qquad
  \Psi_{T_w,n}=\Psi_n.
  \label{eq:adapted-differences-and-potentials}
\end{equation}
Here, $\Psi_{T_w,n}
=\bW_{T_w}^{\transpose}\bm F_n^{(I)}-F_{T_w,n}$ is the adapted normal
entropy-flux potential. Since the LDG weights sum to one, the adapted
numerical entropy-variable trace satisfies
$\widehat{\bW}_{T_w,f}=\widehat{\bW}_f+\be_E/T_w$.
Thus, the auxiliary SAT differences are unchanged, so the same
interior SAT data and entropy dissipation apply to $S$ and $S_{T_w}$.
The following subsections specify the isothermal-wall closures.

\subsection{Common stationary isothermal-wall data and wall residual}
\label{subsec:isothermal-wall-common-data}

This subsection defines the data shared by the CR--SAT and FEE--SAT:
the inviscid mirror state and wall flux, entropy-variable wall value and
residual, auxiliary-gradient SAT, admissible wall state, and one-sided
viscous data. It then expresses the adapted wall-entropy contribution
and numerical outward total-energy flux in terms of
$\widehat{\bm F}_{n,w}^{(V)}$, whose two constructions are given in
Subsections~\ref{subsec:cr-sat} and~\ref{subsec:fee-sat}.

Consider a node on a stationary wall face
$f\subset\partial K\cap\wall$, with outward unit normal $\bn=\bn_K$,
wall velocity $\bu_w=\bm0$, and prescribed constant temperature
$\Tw>0$. The interior traces are
$\bQ^-=\bQ_{f,K}$, $\bW^-=\bW_{f,K}$, and
$\bm\theta_j^-=\bm\theta_{j,f,K}$.
The face and element indices are suppressed in the nodewise wall
notation, where $w$ identifies wall quantities.
A subscript $n$ on a flux, derivative, or entropy potential denotes
contraction with $\bn$, as in
Subsection~\ref{subsec:normal-convention}, and a hat denotes a numerical
boundary value or flux.
The superscript $+,(I)$ identifies a constructed inviscid mirror state,
not a trace from a neighboring element.

The impermeability condition (i.e., the no-penetration condition) 
is enforced inviscidly by reversing only the normal velocity
\cite{SvardOzcan2014,ParsaniCarpenterNielsen2015}:
\begin{equation}
  \rho^{+,(I)}=\rho^-,
  \qquad
  \bu^{+,(I)}=\bu^- -2(\bu^-\cdot\bn)\bn,
  \qquad
  T^{+,(I)}=T^-.
  \label{eq:inviscid-mirror-state}
\end{equation}
Let $\bQ^{+,(I)}$ denote the conservative representation of this
mirror state. The common numerical normal inviscid flux is
\begin{equation}
  \widehat{\bm F}_{n,w}^{(I)}
  =
  \begin{pmatrix}
    0\\
    p^-\bn\\
    0
  \end{pmatrix}.
  \label{eq:inviscid-entropy-conservative-wall-flux}
\end{equation}
Since $\Psi_n^-=p^-u_n^-/T^-$ and the affine shift changes only the
entropy-variable component associated with $\rho E$, we have
\begin{equation}
  (\bW^-)^{\transpose}\widehat{\bm F}_{n,w}^{(I)}-\Psi_n^-
  =
  (\bW_{T_w}^-)^{\transpose}\widehat{\bm F}_{n,w}^{(I)}-\Psi_n^-
  =
  0.
  \label{eq:inviscid-wall-entropy-cancellation}
\end{equation}
For the Ismail--Roe and Chandrashekar entropy-conservative fluxes
used in the numerical verification,
$\bm F_n^{\mathrm{ec}}(\bQ^-,\bQ^{+,(I)})$ reproduces
Eq.~\eqref{eq:inviscid-entropy-conservative-wall-flux}, as verified
in~\ref{app:closed-form-and-pointwise-details}. No characteristic dissipation
is added at the wall.

The no-slip and prescribed-temperature data are encoded by
\begin{equation}
  \widehat{\bW}_w
  =
  \begin{pmatrix}
    W_1^-\\
    \bm0\\
    -1/\Tw
  \end{pmatrix},
  \label{eq:canonical-entropy-boundary-value}
\end{equation}
with adapted counterpart
\begin{equation}
  \widehat{\bW}_{T_w,w}
  =
  \widehat{\bW}_w+\frac{\be_E}{\Tw}
  =
  \begin{pmatrix}
    W_1^-\\
    \bm0\\
    0
  \end{pmatrix}.
  \label{eq:adapted-entropy-boundary-value}
\end{equation}
The same constant affine shift is applied to the interior and numerical
wall entropy variables, so their differences coincide. We define the
common wall-data residual by
\begin{equation}
  \bm r_w
  =
  \bW^- -\widehat{\bW}_w
  =
  \bW_{T_w}^- -\widehat{\bW}_{T_w,w}
  =
  \begin{pmatrix}
    0\\[0.3ex]
    \bu^-/T^-\\[0.8ex]
    1/\Tw-1/T^-
  \end{pmatrix}.
  \label{eq:adapted-isothermal-wall-residual}
\end{equation}
The vector $\bm r_w\in\R^m$ measures the departure of the interior trace
from the prescribed no-slip and temperature data in entropy variables.
Its momentum components measure the velocity mismatch scaled by
$1/T^-$. Its energy component measures the temperature mismatch through
$1/\Tw-1/T^-=(T^--\Tw)/(\Tw T^-)$.
For an admissible interior state, $\bm r_w=\bm0$ if and only if
$\bu^-=\bm0$ and $T^-=\Tw$. The first component vanishes because the
first entropy variable is copied from the interior trace, rather than
specified independently as a boundary datum. The residual $\bm r_w$
is distinct from the conservative wall SAT.

The auxiliary-gradient wall SAT uses the negative of this residual,
weighted by the corresponding normal component:
\begin{equation}
  \bm g_{i,w}^{\theta}
  =
  n_i\left(\widehat{\bW}_w-\bW^-\right)
  =
  -n_i\bm r_w.
  \label{eq:wall-gradient-numerical-value}
\end{equation}
On a wall face $f\subset\partial K$, these nodewise vectors are stacked
into $\bm g_{i,f,K}^{\theta}\in\R^{mN_f}$. Their contribution to the
right-hand side of Eq.~\eqref{eq:discrete-entropy-gradient} is
\begin{equation*}
  \mathbb P_K^{-1}\mathbb E_{f,K}\bm g_{i,f,K}^{\theta}.
\end{equation*}
Here, $\mathbb E_{f,K}$ applies the physical face-quadrature weights
and injects the face data into the element vector. Multiplication by
$\mathbb P_K^{-1}$ applies the inverse element mass matrix.
This adds a weak boundary correction to the auxiliary-gradient
equation rather than imposing $\bm r_w=\bm0$ directly. It is the
prescribed-$\Tw$ counterpart of the auxiliary wall data in
Refs.~\cite{ParsaniCarpenterNielsen2015,DalcinEtAl2019}.

The same wall value determines the admissible conservative state used to
evaluate the viscous coefficient matrices:
\begin{equation}
  \widehat{\bQ}_w=\bQ(\widehat{\bW}_w),
  \label{eq:isothermal-wall-trace-state}
\end{equation}
with
\begin{equation}
  \widehat\rho_w>0,
  \qquad
  \widehat\bu_w=\bm0,
  \qquad
  \widehat T_w=\Tw.
  \label{eq:isothermal-wall-trace-primitive-data}
\end{equation}
The positive-density formula is derived in
\ref{app:viscous-boundary-state-admissibility}. No exterior
viscous-gradient state is introduced and all viscous data use the one-sided
LDG gradient.

The wall penalties introduced below share the same normal scaling and
coefficient matrices. For a wall face normal to Cartesian direction
$r$, let $h_{n,K}=h_{r,K}$ be the width of the wall-adjacent element
in that direction. The physical normal norm matrix is
$\mathsf P_{n,K}=(h_{n,K}/2)\mathsf P_{\Nsol}$, and $p_{n,w}>0$
is its wall endpoint entry, as in
Eq.~\eqref{eq:normal-endpoint-mass-entry}. Define
\begin{equation}
  \bm C_{nn,w}^{-}
  =n_i\bm C_{ij}(\bQ^-)n_j,
  \qquad
  \widehat{\bm C}_{nn,w}
  =n_i\bm C_{ij}(\widehat{\bQ}_w)n_j.
  \label{eq:wall-normal-viscous-matrices}
\end{equation}
For the admissible interior and wall states, these matrices satisfy
\begin{equation*}
  \bm C_{nn,w}^{-}\succeq\bm0,
  \qquad
  \widehat{\bm C}_{nn,w}\succeq\bm0.
\end{equation*}

Using the one-sided state and LDG gradient, let
$\bm t^-\in\R^d$ denote the viscous traction associated with the outward
wall normal, and let $q_n^-\in\R$ denote the outward Fourier heat flux:
\begin{equation}
  \bm t^-=\btau^-\bn,
  \qquad
  q_n^-=-\kappa(T^-)\,\p_nT^-.
  \label{eq:one-sided-traction-fourier-flux}
\end{equation}
Here, $\btau^-$ is the one-sided viscous stress tensor. Since $\btau^-$
and $\p_nT^-$ are assembled from the one-sided LDG gradient, $\bm t^-$
and $q_n^-$ are numerical approximations of the corresponding continuum
traction and Fourier heat flux. The resulting one-sided physical normal
viscous flux is
\begin{equation}
  \bm F_{n,w}^{(V),-}
  =
  \begin{pmatrix}
    0\\
    \bm t^-\\
    \bu^-\cdot\bm t^- -q_n^-
  \end{pmatrix}.
  \label{eq:one-sided-wall-viscous-flux}
\end{equation}

Equation~\eqref{eq:inviscid-wall-entropy-cancellation} establishes that
the common numerical inviscid wall flux contributes zero to the
pointwise $S_{T_w}$ balance, without requiring $\bm r_w=\bm0$.
The auxiliary-gradient wall SAT is already fixed by
Eq.~\eqref{eq:wall-gradient-numerical-value}. It remains to specify
the numerical viscous wall flux $\widehat{\bm F}_{n,w}^{(V)}$.
As for the physical viscous flux, its mass component must vanish.

Combining the conservative and auxiliary-gradient wall terms gives
the viscous contribution
\begin{align}
  \mathcal B_w^{(V)}
  &=
  (\bW_{T_w}^-)^{\transpose}
  \widehat{\bm F}_{n,w}^{(V)}
  -
  \bm r_w^{\transpose}\bm F_{n,w}^{(V),-}
  \notag\\
  &=
  \bm r_w^{\transpose}
  \left(
    \widehat{\bm F}_{n,w}^{(V)}
    -
    \bm F_{n,w}^{(V),-}
  \right).
  \label{eq:generic-discrete-wall-entropy-contribution}
\end{align}
The second term in the first line is the contribution of the
auxiliary-gradient SAT
$\bm g_{i,w}^{\theta}=-n_i\bm r_w$.
For the second equality, use
$\bW_{T_w}^-=\widehat{\bW}_{T_w,w}+\bm r_w$ and
\begin{equation*}
  \widehat{\bW}_{T_w,w}^{\transpose}
  \widehat{\bm F}_{n,w}^{(V)}
  =
  0.
\end{equation*}
This contraction vanishes because
$\widehat{\bW}_{T_w,w}=(W_1^-,\bm0,0)^{\transpose}$ has only its first
component potentially nonzero, whereas the numerical viscous flux has
zero mass component. The wall entropy contribution therefore depends
on the contraction of the wall-data residual with the correction to
the one-sided viscous flux.

The same numerical viscous flux also determines the energy exchange
through the wall. Since the inviscid wall flux in
Eq.~\eqref{eq:inviscid-entropy-conservative-wall-flux} has zero energy
component, the complete numerical outward total-energy flux is
\begin{equation}
  F_{E,n,h}
  =
  \be_E^{\transpose}
  \left(
    \widehat{\bm F}_{n,w}^{(I)}
    -
    \widehat{\bm F}_{n,w}^{(V)}
  \right)
  =
  -\be_E^{\transpose}\widehat{\bm F}_{n,w}^{(V)}.
  \label{eq:generic-numerical-wall-energy-flux}
\end{equation}
Equations~\eqref{eq:generic-discrete-wall-entropy-contribution}
and~\eqref{eq:generic-numerical-wall-energy-flux} separate the entropy
and energy effects of the numerical viscous wall flux. A vanishing
adapted wall entropy contribution does not require the numerical
energy flux to vanish. The following subsections specify
$\widehat{\bm F}_{n,w}^{(V)}$ for the CR--SAT and FEE--SAT formulations
and establish their respective entropy and energy-flux properties.

\subsection{Complete-residual SAT formulation}
\label{subsec:cr-sat}

Using the nodewise wall notation of
Subsection~\ref{subsec:isothermal-wall-common-data}, the CR--SAT chooses
a viscous-flux increment $-\bm\Lambda_w^{\mathrm{CR}}\bm r_w$, with
$\bm\Lambda_w^{\mathrm{CR}}\succeq\bm0$, to make the contribution in
Eq.~\eqref{eq:generic-discrete-wall-entropy-contribution} nonpositive.
The qualifier ``complete'' means that no projection is applied:
all nonzero momentum and temperature components of $\bm r_w$ enter
the correction.

To retain the constitutive and normal-mesh scaling of the viscous operator,
we construct the penalty matrix from the normal viscous coefficient matrices
evaluated at the interior and numerical wall states. For
$\beta_w\geq0$, define
\begin{equation}
  \bm\Lambda_w^{\mathrm{CR}}
  =
  \frac{\beta_w}{p_{n,w}}
  \left(
    \bm C_{nn,w}^{-}+\widehat{\bm C}_{nn,w}
  \right)
  \in\R^{m\times m},
  \qquad
  \bm\Lambda_w^{\mathrm{CR}}\succeq\bm0.
  \label{eq:cr-wall-penalty-matrix}
\end{equation}
Here, $\beta_w$ controls the penalty strength, while $p_{n,w}^{-1}$
provides the inverse wall-normal norm scaling.
The property $\bm\Lambda_w^{\mathrm{CR}}\succeq\bm0$ follows from
$\bm C_{nn,w}^{-}\succeq\bm0$,
$\widehat{\bm C}_{nn,w}\succeq\bm0$, and $\beta_w/p_{n,w}\geq0$.
With the one-sided normalization adopted here, no factor $1/2$ is
included, and $\beta_w$ is defined relative to this convention.

The corresponding entropy-stabilizing viscous-flux correction is
\begin{equation}
  \bm M_w^{\mathrm{CR}}
  =
  -\bm\Lambda_w^{\mathrm{CR}}\bm r_w
  \in\R^m.
  \label{eq:cr-wall-penalty}
\end{equation}
Its mass-flux component is zero because the first row of
$\bm\Lambda_w^{\mathrm{CR}}$ vanishes. The CR--SAT completes the numerical
viscous wall flux by setting
\begin{equation}
  \widehat{\bm F}_{n,w}^{(V),\mathrm{CR}}
  =
  \bm F_{n,w}^{(V),-}
  +\bm M_w^{\mathrm{CR}}.
  \label{eq:cr-wall-viscous-flux}
\end{equation}
The correction acts on the wall-data mismatch and vanishes when the
exact wall data are attained.

Substitution of Eq.~\eqref{eq:cr-wall-viscous-flux} into the generic
wall identity~\eqref{eq:generic-discrete-wall-entropy-contribution} gives
\begin{equation}
  \mathcal B_w^{(V),\mathrm{CR}}
  =
  -\bm r_w^{\transpose}
   \bm\Lambda_w^{\mathrm{CR}}
   \bm r_w
  \leq0.
  \label{eq:cr-wall-entropy-contribution}
\end{equation}
Hence, the CR--SAT is boundary-entropy-stable for every $\beta_w\geq0$
with respect to $S_{T_w}$, and its $\beta_w=0$ limit is
boundary-entropy-conservative.
Since $\bm\Lambda_w^{\mathrm{CR}}\succeq\bm0$, equality holds precisely
when $\bm\Lambda_w^{\mathrm{CR}}\bm r_w=\bm0$.

The numerical viscous flux also determines the nodewise SAT in the
conservative equation. Specializing the physical-minus-numerical
flux correction in Eq.~\eqref{eq:compact-interior-sat-data} to a wall
face gives
\begin{equation}
  \begin{aligned}
    \bm g_w^{q,\mathrm{CR}}
    &=
    \bm F_n^{(I),-}
    -\widehat{\bm F}_{n,w}^{(I)}
    +\widehat{\bm F}_{n,w}^{(V),\mathrm{CR}}
    -\bm F_{n,w}^{(V),-}
    \\
    &=
    \bm F_n^{(I),-}
    -\widehat{\bm F}_{n,w}^{(I)}
    +\bm M_w^{\mathrm{CR}}.
  \end{aligned}
  \label{eq:cr-complete-wall-sat}
\end{equation}
The first two terms weakly impose the common inviscid wall flux,
while $\bm M_w^{\mathrm{CR}}$ supplies the viscous wall correction.
On each wall face, the nodewise vectors
$\bm g_w^{q,\mathrm{CR}}\in\R^m$ are stacked into
$\bm g_{f,K}^{q}\in\R^{mN_f}$ and enter
Eq.~\eqref{eq:semidiscrete-conservative-equation} through
$\mathbb P_K^{-1}\mathbb E_{f,K}\bm g_{f,K}^{q}$.

The same choice determines the complete numerical outward total-energy
flux. Since
$\be_E^{\transpose}\widehat{\bm F}_{n,w}^{(I)}=0$, substitution of
Eq.~\eqref{eq:cr-wall-viscous-flux} into
Eq.~\eqref{eq:generic-numerical-wall-energy-flux} yields
\begin{equation}
  \begin{aligned}
    F_{E,n,h}^{\mathrm{CR}}
    &=
    \be_E^{\transpose}
    \left(
      \widehat{\bm F}_{n,w}^{(I)}
      -\widehat{\bm F}_{n,w}^{(V),\mathrm{CR}}
    \right)
    \\
    &=
    q_n^-
    -\bu^-\!\cdot\bm t^-
    -\be_E^{\transpose}\bm M_w^{\mathrm{CR}}.
  \end{aligned}
  \label{eq:cr-wall-numerical-energy-flux}
\end{equation}
At finite resolution, this flux generally contains both the one-sided
mechanical-work term and the total-energy component of the penalty. When
the exact wall data are attained, $\bu^-=\bm0$ and $\bm r_w=\bm0$, so these
terms vanish and
Eq.~\eqref{eq:cr-wall-numerical-energy-flux} reduces to $q_n^-$. With a
consistent auxiliary-gradient approximation, $q_n^-$ converges to the
physical Fourier heat flux.

\begin{theorem}[CR--SAT wall balance]
\label{thm:cr-sat-wall-balance}
Let $d\in\{2,3\}$, let $\bn$ be directed outward from the fluid domain, and
assume the constitutive conditions in Eq.~\eqref{eq:transport-assumptions}.
At a stationary wall, assume $\rho^->0$, $T^->0$, $T_w>0$,
$p_{n,w}>0$, and $\beta_w\geq0$, and use the common wall data of
Subsection~\ref{subsec:isothermal-wall-common-data}. Then the common wall
treatment completed by
$\widehat{\bm F}_{n,w}^{(V),\mathrm{CR}}$ is consistent with the stationary
no-slip isothermal boundary conditions. Its inviscid contribution to the
pointwise $S_{T_w}$ balance is zero, and its viscous contribution is
\begin{equation*}
  -\bm r_w^{\transpose}
   \bm\Lambda_w^{\mathrm{CR}}
   \bm r_w
  \leq0.
\end{equation*}
The contribution is identically zero when $\beta_w=0$. The associated
semi-discrete fluid-energy balance uses the numerical outward total-energy
flux in Eq.~\eqref{eq:cr-wall-numerical-energy-flux}.
\end{theorem}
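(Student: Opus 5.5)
The proof is largely an assembly of the nodewise identities already established in Subsections~\ref{subsec:isothermal-wall-common-data} and~\ref{subsec:cr-sat}. I will verify four claims in turn: consistency, zero inviscid contribution, the viscous contribution, and the energy flux.

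\emph{Consistency.} I will show that every SAT correction vanishes when the exact wall data hold. Assume $\bu^-=\bm0$ and $T^-=\Tw$. Then Eq.~\eqref{eq:adapted-isothermal-wall-residual} gives $\bm r_w=\bm0$. Hence the auxiliary SAT~\eqref{eq:wall-gradient-numerical-value} vanishes, and so does $\bm M_w^{\mathrm{CR}}$ in Eq.~\eqref{eq:cr-wall-penalty}. Consequently $\widehat{\bm F}_{n,w}^{(V),\mathrm{CR}}=\bm F_{n,w}^{(V),-}$. Since $u_n^-=0$, the physical inviscid flux~\eqref{eq:normal-inviscid-flux} reduces to $(0,p^-\bn,0)^{\transpose}$. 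This equals $\widehat{\bm F}_{n,w}^{(I)}$, so the conservative SAT~\eqref{eq:cr-complete-wall-sat} vanishes. I will also note that $\widehat{\bQ}_w$ is admissible with $\widehat\bu_w=\bm0$ and $\widehat T_w=\Tw$, by Eq.~\eqref{eq:isothermal-wall-trace-primitive-data}. Together these show that the treatment reproduces the exact no-slip isothermal state and flux.

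\emph{Inviscid contribution.} I will contract the conservative SAT with $\bW_{T_w}^-$ and combine it with the face term that the element identity~\eqref{app:eq:element-adapted-entropy-contraction} leaves after flux differencing. This leaves $(\bW_{T_w}^-)^{\transpose}\widehat{\bm F}_{n,w}^{(I)}-\Psi_{T_w,n}^-$. By Lemma~\ref{lem:affine-entropy}, $\Psi_{T_w,n}=\Psi_n$, and Eq.~\eqref{eq:inviscid-wall-entropy-cancellation} then gives zero. The key fact is that the wall flux has zero mass and energy components. Its contraction with the adapted entropy variables therefore only sees the momentum part, $(\bu^-/T^-)\cdot p^-\bn=p^-u_n^-/T^-=\Psi_n^-$. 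This holds without requiring $\bm r_w=\bm0$.

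\emph{Viscous contribution.} I will combine the viscous part of the conservative SAT with the auxiliary SAT. This uses the SBP property~\eqref{eq:multidimensional-sbp-property} and the symmetric structure $\bFV{i}=\bm C_{ij}\p_j\bWT$, and yields Eq.~\eqref{eq:generic-discrete-wall-entropy-contribution}. The step needing care is the reduction to the second line, which requires $\widehat{\bW}_{T_w,w}^{\transpose}\widehat{\bm F}_{n,w}^{(V),\mathrm{CR}}=0$. That in turn requires the mass component of $\bm M_w^{\mathrm{CR}}$ to vanish. I will obtain this from the fact that the first row of each $\bm C_{ij}$ is zero, as in \ref{app:viscous-coefficient-matrices}, since there is no diffusive mass flux; hence the first row of $\bm\Lambda_w^{\mathrm{CR}}$ is zero. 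Substituting Eq.~\eqref{eq:cr-wall-viscous-flux} then gives $-\bm r_w^{\transpose}\bm\Lambda_w^{\mathrm{CR}}\bm r_w$.

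\emph{Sign and energy flux.} The contribution is nonpositive because $\bm C_{nn,w}^-$ and $\widehat{\bm C}_{nn,w}$ are positive semidefinite, as restrictions of the block-PSD matrix in Eq.~\eqref{eq:viscous-symmetrization} to $\bm Z_i=n_i\bm z$, and because $\beta_w/p_{n,w}\ge0$. When $\beta_w=0$ the matrix $\bm\Lambda_w^{\mathrm{CR}}$ vanishes, so the contribution is identically zero. Finally, the energy-flux statement follows by inserting Eq.~\eqref{eq:cr-wall-viscous-flux} into Eq.~\eqref{eq:generic-numerical-wall-energy-flux}. Reading off the energy component of Eq.~\eqref{eq:one-sided-wall-viscous-flux} then gives Eq.~\eqref{eq:cr-wall-numerical-energy-flux}.

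I expect the main obstacle to be the entropy contraction of the wall SAT terms. This means justifying the first line of Eq.~\eqref{eq:generic-discrete-wall-entropy-contribution} from the element-level contraction of Eqs.~\eqref{eq:semidiscrete-conservative-equation}--\eqref{eq:discrete-entropy-gradient}. The approach is to contract the auxiliary equation with $\bm f_{i,K}^{(V)}$ under $\mathbb P_K$ and use the SBP property. This shows that the auxiliary SAT contributes $-\bm r_w^{\transpose}\bm F_{n,w}^{(V),-}$ at each wall node, weighted by $\mathsf P_f$. Positive face weights then carry the nodewise sign over to the face sum.
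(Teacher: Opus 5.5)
Your proposal is correct and follows essentially the same route as the paper. Consistency comes from $\bm r_w=\bm0$ making both wall SAT corrections vanish, the entropy statement from Eq.~\eqref{eq:generic-discrete-wall-entropy-contribution} together with $\widehat{\bm F}_{n,w}^{(V),\mathrm{CR}}-\bm F_{n,w}^{(V),-}=-\bm\Lambda_w^{\mathrm{CR}}\bm r_w$ and $\bm\Lambda_w^{\mathrm{CR}}\succeq\bm0$, and the energy flux from reading off the total-energy component. The additional steps you spell out (the inviscid cancellation, and the first line of the generic wall identity obtained from the element contraction) are what the paper establishes before the theorem and in Eq.~\eqref{app:eq:element-adapted-entropy-contraction}, which its proof then simply cites.
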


\begin{proof}
When the exact wall data are attained, $\bu^-=\bm0$, $T^-=T_w$, and
$\bm r_w=\bm0$. Hence, the auxiliary-gradient wall SAT and
$\bm M_w^{\mathrm{CR}}$ vanish,
$\widehat{\bm F}_{n,w}^{(V),\mathrm{CR}}
=\bm F_{n,w}^{(V),-}$, and the one-sided physical inviscid flux equals
$\widehat{\bm F}_{n,w}^{(I)}=(0,p^-\bn,0)^{\transpose}$. Thus, both
wall SAT corrections vanish, proving consistency. The entropy statement
follows from
Eq.~\eqref{eq:generic-discrete-wall-entropy-contribution} and
\begin{equation*}
  \widehat{\bm F}_{n,w}^{(V),\mathrm{CR}}
  -\bm F_{n,w}^{(V),-}
  =
  -\bm\Lambda_w^{\mathrm{CR}}\bm r_w,
\end{equation*}
together with
$\bm\Lambda_w^{\mathrm{CR}}\succeq\bm0$. The energy statement follows by
taking the total-energy component of the complete numerical wall flux.
\end{proof}

\subsection{Fourier-energy-exact SAT formulation}
\label{subsec:fee-sat}

The FEE--SAT retains the common wall data and fluid-side trace convention
of Subsection~\ref{subsec:isothermal-wall-common-data}, with
$\bn=\bn_K$. In addition to the adapted wall entropy property, it
requires the complete numerical outward total-energy flux to satisfy
\begin{equation*}
  F_{E,n,h}^{\mathrm{FEE}}=q_n^-
\end{equation*}
at every wall node and spatial resolution. Since
$\be_E^{\transpose}\widehat{\bm F}_{n,w}^{(I)}=0$ and
\begin{equation*}
  \be_E^{\transpose}\bm F_{n,w}^{(V),-}
  =\bu^-\!\cdot\bm t^- -q_n^-,
\end{equation*}
the correction to the one-sided viscous flux must contribute
$-\bu^-\!\cdot\bm t^-$ to its total-energy component.
The FEE--SAT separates this adjustment from optional entropy dissipation:
an entropy-neutral skew correction cancels the mechanical-work term,
while a momentum-projected penalty adds dissipation without changing
the total-energy flux.

Using the density--momentum--total-energy block ordering, define
$\bm K_w\in\R^{m\times m}$ by
\begin{equation}
  \bm K_w
  =
  T^-
  \begin{pmatrix}
    0 & \bm0^{\transpose} & 0
    \\
    \bm0 & \bm0_{d\times d} & \bm t^-
    \\
    0 & -(\bm t^-)^{\transpose} & 0
  \end{pmatrix},
  \qquad
  \bm K_w^{\transpose}=-\bm K_w.
  \label{eq:fee-skew-matrix}
\end{equation}
The factor $T^-$ is chosen so that multiplication by the wall-data
residual gives
\begin{equation}
  \bm K_w\bm r_w
  =
  \begin{pmatrix}
    0
    \\
    (T^-/T_w-1)\bm t^-
    \\
    -\bu^-\!\cdot\bm t^-
  \end{pmatrix},
  \qquad
  \bm r_w^{\transpose}\bm K_w\bm r_w=0.
  \label{eq:fee-skew-correction}
\end{equation}
The energy component cancels the one-sided mechanical-work term.
The companion momentum component is required by
$\bm K_w^{\transpose}=-\bm K_w$ and vanishes when $T^-=T_w$.
Equation~\eqref{eq:generic-discrete-wall-entropy-contribution} shows
that this flux increment contributes
$\bm r_w^{\transpose}\bm K_w\bm r_w=0$ to the pointwise $S_{T_w}$
balance, so the energy-flux adjustment is entropy-neutral.

To add entropy dissipation without changing either the mass-flux or
total-energy component, introduce the orthogonal projector onto the momentum
block,
\begin{equation}
  \bm\Pi_u
  =
  \begin{pmatrix}
    0 & \bm0^{\transpose} & 0
    \\
    \bm0 & \bm I & \bm0
    \\
    0 & \bm0^{\transpose} & 0
  \end{pmatrix}
  \in\R^{m\times m}.
  \label{eq:fee-momentum-projector}
\end{equation}
Here, $\bm I$ is the $d\times d$ identity, and
$\bm\Pi_u^{\transpose}=\bm\Pi_u=\bm\Pi_u^2$.
Using the same wall-penalty parameter $\beta_w\geq0$, normal scale
$p_{n,w}>0$, and one-sided normalization as the CR--SAT, define
\begin{align}
  \bm\Lambda_{w,u}^{\mathrm{FEE}}
  &=
  \frac{\beta_w}{p_{n,w}}
  \bm\Pi_u
  \left(
    \bm C_{nn,w}^{-}+\widehat{\bm C}_{nn,w}
  \right)
  \bm\Pi_u
  \in\R^{m\times m},
  \qquad
  \bm\Lambda_{w,u}^{\mathrm{FEE}}\succeq\bm0,
  \label{eq:fee-projected-penalty-matrix}
  \\
  \bm M_{w,u}^{\mathrm{FEE}}
  &=
  -\bm\Lambda_{w,u}^{\mathrm{FEE}}\bm r_w
  \in\R^m.
  \label{eq:fee-projected-penalty}
\end{align}
Since $\bm C_{nn,w}^{-}+\widehat{\bm C}_{nn,w}\succeq\bm0$,
$\bm\Pi_u^{\transpose}=\bm\Pi_u$, and $\beta_w/p_{n,w}\geq0$,
projection on both sides gives
$\bm\Lambda_{w,u}^{\mathrm{FEE}}\succeq\bm0$.
The quadratic-form argument is given in
\ref{app:fee-wall-identities}.
The projected matrix has zero first and last rows and columns, so
$\bm M_{w,u}^{\mathrm{FEE}}$ has only momentum components and
$\be_E^{\transpose}\bm M_{w,u}^{\mathrm{FEE}}=0$.

The FEE--SAT numerical viscous wall flux is
\begin{equation}
  \widehat{\bm F}_{n,w}^{(V),\mathrm{FEE}}
  =
  \bm F_{n,w}^{(V),-}
  +\bm K_w\bm r_w
  +\bm M_{w,u}^{\mathrm{FEE}}.
  \label{eq:fee-wall-viscous-flux}
\end{equation}
All three terms have zero mass-flux component. Substitution into the generic
pointwise wall identity
\eqref{eq:generic-discrete-wall-entropy-contribution} gives
\begin{equation}
  \mathcal B_w^{(V),\mathrm{FEE}}
  =
  -\bm r_w^{\transpose}
   \bm\Lambda_{w,u}^{\mathrm{FEE}}
   \bm r_w
  \leq0,
  \label{eq:fee-wall-entropy-contribution}
\end{equation}
because $\bm r_w^{\transpose}\bm K_w\bm r_w=0$. Therefore,
the FEE--SAT is boundary-entropy-stable for every
$\beta_w\geq0$ with respect to $S_{T_w}$, and its $\beta_w=0$ limit
is boundary-entropy-conservative.
Since $\bm\Lambda_{w,u}^{\mathrm{FEE}}\succeq\bm0$, equality holds
precisely when $\bm\Lambda_{w,u}^{\mathrm{FEE}}\bm r_w=\bm0$.
In particular, when $\bu^-=\bm0$, the projected residual
$\bm\Pi_u\bm r_w$ vanishes even if $T^-\neq T_w$.
The penalty therefore does not directly dissipate a pure temperature
mismatch. The prescribed temperature remains weakly imposed through
the common entropy-variable wall value in
Eq.~\eqref{eq:canonical-entropy-boundary-value} and the auxiliary-gradient
SAT in Eq.~\eqref{eq:wall-gradient-numerical-value}.

The total-energy component of
Eq.~\eqref{eq:fee-wall-viscous-flux} satisfies
\begin{equation*}
  \be_E^{\transpose}
  \widehat{\bm F}_{n,w}^{(V),\mathrm{FEE}}
  =
  \bu^-\!\cdot\bm t^- -q_n^-
  -\bu^-\!\cdot\bm t^-
  =
  -q_n^-.
\end{equation*}
Therefore, the complete numerical outward total-energy flux is
\begin{equation}
  F_{E,n,h}^{\mathrm{FEE}}
  =
  \be_E^{\transpose}
  \left(
    \widehat{\bm F}_{n,w}^{(I)}
    -\widehat{\bm F}_{n,w}^{(V),\mathrm{FEE}}
  \right)
  =
  q_n^-.
  \label{eq:fee-fourier-energy-identity}
\end{equation}
This identity holds pointwise at every spatial resolution and for every
$\beta_w\geq0$, without requiring the interior trace to attain the exact
wall data. Here, $q_n^-$ is the one-sided numerical Fourier heat flux
computed from the LDG gradient. Fourier-energy exactness refers to
this numerical flux identity, not to an exact evaluation of the
continuum heat flux.

The same physical-minus-numerical flux correction used for the CR--SAT
now gives the nodewise conservative wall SAT
\begin{equation}
  \begin{aligned}
    \bm g_w^{q,\mathrm{FEE}}
    &=
    \bm F_n^{(I),-}
    -\widehat{\bm F}_{n,w}^{(I)}
    +\widehat{\bm F}_{n,w}^{(V),\mathrm{FEE}}
    -\bm F_{n,w}^{(V),-}
    \\
    &=
    \bm F_n^{(I),-}
    -\widehat{\bm F}_{n,w}^{(I)}
    +\bm K_w\bm r_w
    +\bm M_{w,u}^{\mathrm{FEE}}.
  \end{aligned}
  \label{eq:fee-complete-wall-sat}
\end{equation}
The first two terms weakly impose the common inviscid wall flux.
The skew term adjusts the viscous flux without contributing to the
adapted wall entropy balance, while the projected penalty supplies
entropy dissipation. On each wall face, the nodewise vectors
$\bm g_w^{q,\mathrm{FEE}}\in\R^m$ are stacked into
$\bm g_{f,K}^{q}\in\R^{mN_f}$ and enter
Eq.~\eqref{eq:semidiscrete-conservative-equation} through
$\mathbb P_K^{-1}\mathbb E_{f,K}\bm g_{f,K}^{q}$.
When the exact wall data are attained, $\bm r_w=\bm0$, so both
formulation-specific corrections vanish and the common wall treatment
is consistent. We report the supporting identities in
\ref{app:fee-wall-identities}.

\begin{theorem}[FEE--SAT wall balance]
\label{thm:fee-sat-wall-balance}
Under the assumptions of Theorem~\ref{thm:cr-sat-wall-balance}, the common
wall treatment completed by
$\widehat{\bm F}_{n,w}^{(V),\mathrm{FEE}}$ is consistent with the stationary
no-slip isothermal boundary conditions. Its inviscid contribution to the
pointwise $S_{T_w}$ balance is zero, and its viscous contribution is
\begin{equation*}
  -\bm r_w^{\transpose}
   \bm\Lambda_{w,u}^{\mathrm{FEE}}
   \bm r_w
  \leq0.
\end{equation*}
The contribution is identically zero when $\beta_w=0$. For every
$\beta_w\geq0$, the numerical outward total-energy flux satisfies
$F_{E,n,h}^{\mathrm{FEE}}=q_n^-$ pointwise.
\end{theorem}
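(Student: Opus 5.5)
The plan mirrors the proof of Theorem~\ref{thm:cr-sat-wall-balance}, with four separate claims checked in turn: consistency, the inviscid entropy contribution, the viscous entropy contribution, and the energy-flux identity.

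\emph{Consistency.} When the exact wall data are attained, $\bu^-=\bm0$ and $T^-=T_w$, so $\bm r_w=\bm0$ by Eq.~\eqref{eq:adapted-isothermal-wall-residual}. It follows that the auxiliary-gradient SAT $-n_i\bm r_w$ vanishes, and so do $\bm K_w\bm r_w$ and $\bm M_{w,u}^{\mathrm{FEE}}=-\bm\Lambda_{w,u}^{\mathrm{FEE}}\bm r_w$. Hence $\widehat{\bm F}_{n,w}^{(V),\mathrm{FEE}}=\bm F_{n,w}^{(V),-}$. With $u_n^-=0$, the one-sided inviscid flux equals $(0,p^-\bn,0)^{\transpose}=\widehat{\bm F}_{n,w}^{(I)}$. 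Therefore $\bm g_w^{q,\mathrm{FEE}}$ in Eq.~\eqref{eq:fee-complete-wall-sat} vanishes as well.

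\emph{Inviscid contribution.} This part is unchanged from the CR--SAT, since the inviscid wall flux is common to both formulations. Eq.~\eqref{eq:inviscid-wall-entropy-cancellation} gives zero directly.

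\emph{Viscous contribution.} First, every term in Eq.~\eqref{eq:fee-wall-viscous-flux} has zero mass component, so the generic identity Eq.~\eqref{eq:generic-discrete-wall-entropy-contribution} applies. For $\bm K_w$, the first row is zero; for $\bm\Lambda_{w,u}^{\mathrm{FEE}}$, the projector $\bm\Pi_u$ kills the first row. Substituting the flux increment $\bm K_w\bm r_w-\bm\Lambda_{w,u}^{\mathrm{FEE}}\bm r_w$ into that identity gives
\[
\bm r_w^{\transpose}\bm K_w\bm r_w-\bm r_w^{\transpose}\bm\Lambda_{w,u}^{\mathrm{FEE}}\bm r_w .
\]
The first term vanishes by skew-symmetry, because a scalar equals its own transpose, so $\bm r^{\transpose}\bm K\bm r=-\bm r^{\transpose}\bm K\bm r$.

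For the sign of the second term, write $\bm y=\bm\Pi_u\bm r_w$. Then
\[
\bm r_w^{\transpose}\bm\Lambda_{w,u}^{\mathrm{FEE}}\bm r_w=\frac{\beta_w}{p_{n,w}}\,\bm y^{\transpose}\bigl(\bm C_{nn,w}^-+\widehat{\bm C}_{nn,w}\bigr)\bm y\ge 0 .
\]
The prefactor is nonnegative because $\beta_w\ge0$ and $p_{n,w}>0$. The quadratic form is nonnegative because of the semidefiniteness of the normal viscous matrices. That semidefiniteness follows from Eq.~\eqref{eq:viscous-symmetrization} with $\bm Z_i=n_i\bm y$, evaluated at the admissible states $\bQ^-$ and $\widehat{\bQ}_w$. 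The second state is admissible by Eq.~\eqref{eq:isothermal-wall-trace-primitive-data}, which uses $T_w>0$ and the positive-density construction. When $\beta_w=0$ the matrix vanishes, and so does the contribution.

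\emph{Energy identity.} I will verify the energy component of $\bm K_w\bm r_w$ explicitly. This is the only computational step and the one requiring care with signs. The last row of $\bm K_w$ is $T^-(0,-(\bm t^-)^{\transpose},0)$, and the momentum block of $\bm r_w$ is $\bu^-/T^-$. Their product is $-\bu^-\cdot\bm t^-$, and the factors $T^-$ cancel.

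The projected penalty has zero last row, so it contributes nothing to the energy component. Hence
\[
\be_E^{\transpose}\widehat{\bm F}_{n,w}^{(V),\mathrm{FEE}}=(\bu^-\cdot\bm t^- -q_n^-)-\bu^-\cdot\bm t^- = -q_n^- .
\]
Since $\be_E^{\transpose}\widehat{\bm F}_{n,w}^{(I)}=0$, Eq.~\eqref{eq:generic-numerical-wall-energy-flux} then yields $F_{E,n,h}^{\mathrm{FEE}}=q_n^-$ for every $\beta_w\ge0$. No assumption on $\bm r_w$ is needed here.
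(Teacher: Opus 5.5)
Your proposal is correct and follows the same route as the paper's proof. Consistency comes from $\bm r_w=\bm0$; the entropy identity comes from Eq.~\eqref{eq:generic-discrete-wall-entropy-contribution} together with $\bm K_w^{\transpose}=-\bm K_w$ and $\bm\Lambda_{w,u}^{\mathrm{FEE}}\succeq\bm0$; and the energy identity comes from the last component of $\bm K_w\bm r_w$ in Eq.~\eqref{eq:fee-skew-correction} plus the zero energy row of the projected penalty. You simply write out the projected quadratic-form argument and the energy-row computation that the paper states briefly in the main text and in its appendix.
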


\begin{proof}
At the exact wall data, $\bm r_w=\bm0$. Hence, the auxiliary-gradient wall
SAT, $\bm K_w\bm r_w$, and $\bm M_{w,u}^{\mathrm{FEE}}$ vanish, while
$\bu^-=\bm0$ makes the one-sided physical inviscid flux equal to
$\widehat{\bm F}_{n,w}^{(I)}$. Thus, both wall SAT corrections vanish,
proving consistency. The entropy identity follows from
Eq.~\eqref{eq:generic-discrete-wall-entropy-contribution},
$\bm K_w^{\transpose}=-\bm K_w$, and
$\bm\Lambda_{w,u}^{\mathrm{FEE}}\succeq\bm0$. The zero total-energy row of
$\bm\Lambda_{w,u}^{\mathrm{FEE}}$ and the last component in
Eq.~\eqref{eq:fee-skew-correction} give
Eq.~\eqref{eq:fee-fourier-energy-identity}.
\end{proof}

\subsection{Global source-free balances and structural comparison}
\label{subsec:two-sat-global-balances}

This subsection assembles the element and interface contributions and
the local CR--SAT and FEE--SAT wall identities into global source-free
adapted-entropy, total-energy, and canonical-entropy balances.
Table~\ref{tab:two-sat-structural-comparison} summarizes the shared
entropy structure and the distinct finite-resolution energy fluxes.

Let $\mathcal F_h^{\mathrm{int}}$ denote the set of conforming interior
interfaces, including identified periodic face pairs, and let
$\mathcal F_h^w$ denote the set of isothermal-wall faces.
Each interface or periodic pair is counted once, using the common
node ordering and face quadrature of
Subsection~\ref{subsec:interface-2015-coupling}.
Periodic pairs use the same coupling and satisfy the same interface
assumptions. In face-quadrature products, nodewise vectors are stacked
on the current face and nodewise matrices are assembled block diagonally.
The common viscous-volume, inviscid-interface, and IP dissipations are
\begin{equation}
  \begin{aligned}
  \Dcal_h^{(V)}
  &=
  \sum_{K\in\mathcal T_h}
  \bm\theta_{i,K}^{\transpose}
  \mathbb P_K[\bm C_{ij}]_K\bm\theta_{j,K},
  \\
  \Dcal_h^{(I,\mathrm{int})}
  &=
  \frac12
  \sum_{f\in\mathcal F_h^{\mathrm{int}}}
  \left\langle
    \bW^+-\bW^-,
    \bm D_f(\bW^+-\bW^-)
  \right\rangle_{f,h}^{(m)},
  \\
  \Dcal_h^{(\mathrm{IP,int})}
  &=
  -\frac12
  \sum_{f\in\mathcal F_h^{\mathrm{int}}}
  \left\langle
    \bm d_f,
    \bm L_f\bm d_f
  \right\rangle_{f,h}^{(m)}.
  \end{aligned}
\end{equation}
The formulation-specific wall dissipations are
\begin{equation}
  \begin{aligned}
  \Dcal_h^{(w,\mathrm{CR})}
  &=
  \sum_{f\in\mathcal F_h^w}
  \left\langle
    \bm r_w,
    \bm\Lambda_w^{\mathrm{CR}}\bm r_w
  \right\rangle_{f,h}^{(m)},
  \\
  \Dcal_h^{(w,\mathrm{FEE})}
  &=
  \sum_{f\in\mathcal F_h^w}
  \left\langle
    \bm r_w,
    \bm\Lambda_{w,u}^{\mathrm{FEE}}\bm r_w
  \right\rangle_{f,h}^{(m)}.
  \end{aligned}
\end{equation}
For $X\in\{\mathrm{CR},\mathrm{FEE}\}$, define
\begin{equation*}
  \Dcal_h^X
  =
  \Dcal_h^{(V)}
  +\Dcal_h^{(I,\mathrm{int})}
  +\Dcal_h^{(\mathrm{IP,int})}
  +\Dcal_h^{(w,X)}.
\end{equation*}
Let $S_{T_w,K},S_K,(\rho E)_K\in\R^{N_K}$ collect the nodal values of
$S_{T_w}(\bQ)$, $S(\bQ)$, and $\rho E$, respectively, on the element $K$.
Under the assumptions stated below, every term in $\Dcal_h^X$ is
nonnegative. Let $\Bcal_{T_w,h}^{\mathrm{other}}$ denote the total
adapted-entropy contribution from the unpaired portion of
$\partial\Omega\setminus\wall$, with the outward-normal convention of
Eq.~\eqref{eq:global-adapted-entropy}.
Let $\mathcal F_{E,h}^{\mathrm{other}}$ denote the integrated outward
numerical total-energy flux through that same portion.
The dissipations of identified periodic pairs are included in
$\Dcal_h^{(I,\mathrm{int})}$ and $\Dcal_h^{(\mathrm{IP,int})}$,
not in $\Bcal_{T_w,h}^{\mathrm{other}}$.

\begin{theorem}[Global source-free adapted-entropy balance]
\label{thm:global-source-free-two-sat-balances}
Assume admissible nodal states and the constitutive conditions in
Eq.~\eqref{eq:transport-assumptions}. Let one spatially and temporally
constant wall temperature $T_w>0$ be prescribed on $\wall$. Let the
conforming Cartesian affine tensor-product elements satisfy
Eq.~\eqref{eq:multidimensional-sbp-property}, and use a symmetric,
consistent two-point entropy-conservative volume flux satisfying
Eq.~\eqref{eq:tadmor-identity-section5}. On each interface in
$\mathcal F_h^{\mathrm{int}}$, including each identified periodic pair,
use the coupling of Subsection~\ref{subsec:interface-2015-coupling},
including
$p_{n,f}^-=p_{n,f}^+=p_{n,f}>0$, with $\bm D_f\succeq\bm0$ and
$\beta_{\mathrm{int}}\geq0$. On all wall faces, use the same formulation, 
either the CR--SAT or the FEE--SAT, with $\beta_w\geq0$. 
If the semi-discrete equations are source
free, then, for $X\in\{\mathrm{CR},\mathrm{FEE}\}$,
\begin{equation}
  \frac{\mathrm d}{\mathrm dt}
  \sum_{K\in\mathcal T_h}
  \langle1,S_{T_w,K}\rangle_{K,h}
  +\Dcal_h^X
  =\Bcal_{T_w,h}^{\mathrm{other}}.
  \label{eq:two-sat-adapted-entropy-balance}
\end{equation}
If $\Bcal_{T_w,h}^{\mathrm{other}}=0$, in particular when all non-wall
boundary faces are paired periodically, the discrete adapted entropy is
nonincreasing. Multiplication by $T_w$ gives the corresponding discrete
total-ballistic-energy balance.
\end{theorem}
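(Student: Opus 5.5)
The plan is to contract the semi-discrete system elementwise with the adapted entropy variables and then sum over elements. Interior faces will be handled by the interface identities already derived, and wall faces by Theorems~\ref{thm:cr-sat-wall-balance} and~\ref{thm:fee-sat-wall-balance}.

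\textbf{Element contraction.} On each element $K$, I will take the $\mathbb P_K$-weighted inner product of Eq.~\eqref{eq:semidiscrete-conservative-equation} with $\bm w_{T_w,K}$. Since $\nabla_{\bQ}S_{T_w}=\bWT$ nodewise, the time derivative term becomes $\frac{\mathrm d}{\mathrm dt}\langle1,S_{T_w,K}\rangle_{K,h}$.

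For the inviscid volume operator, I will use symmetry, consistency, Tadmor's identity~\eqref{eq:tadmor-identity-section5} and the SBP property~\eqref{eq:multidimensional-sbp-property}. The identity~\eqref{eq:tadmor-identity-section5} transfers to adapted variables because differences of $\bWT$ equal differences of $\bW$ and $\Psi_{T_w}=\Psi$ (Lemma~\ref{lem:affine-entropy}). The inviscid contraction then reduces to face terms $\sum_f\langle 1,(\bW_{T_w,f,K})^{\transpose}\bm F^{(I)}_{n,K}-\Psi_{n,K}\rangle_{f,h}$.

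For the viscous volume term, SBP gives
\[
-\bm w_{T_w,K}^{\transpose}\mathbb P_K\mathbb D_{i,K}\bm f^{(V)}_{i,K}
=(\mathbb D_{i,K}\bm w_{K})^{\transpose}\mathbb P_K\bm f^{(V)}_{i,K}
-\text{face terms},
\]
using $\mathbb D_{i,K}\bm w_{T_w,K}=\mathbb D_{i,K}\bm w_K$. I will then substitute $\mathbb D_{i,K}\bm w_K$ from Eq.~\eqref{eq:discrete-entropy-gradient}, i.e.\ $\bm\theta_{i,K}$ minus the lifted auxiliary SAT. This produces $\bm\theta_{i,K}^{\transpose}\mathbb P_K[\bm C_{ij}]_K\bm\theta_{j,K}$, which is nonnegative by the block semidefiniteness~\eqref{eq:viscous-symmetrization} applied nodewise, plus the face terms $-\langle \bm g^{\theta}_{i,f,K},\bm f^{(V)}_{i}\rangle$. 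The conservative SAT then contributes $\langle\bm w_{T_w,f,K},\bm g^q_{f,K}\rangle_{f,h}$. This is the element identity referenced as~\eqref{app:eq:element-adapted-entropy-contraction}, and I will take it as given.

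\textbf{Face sums.} After summation over $K$, the face terms regroup into contributions of three kinds. \emph{Interior faces, including periodic pairs:} pairing $K^-$ and $K^+$ with opposite normals and invoking~\eqref{eq:merriam-interface-entropy-identity}, \eqref{eq:interface-ip-entropy-contribution} and~\eqref{eq:adapted-differences-and-potentials} yields exactly $-\Dcal_h^{(I,\mathrm{int})}-\Dcal_h^{(\mathrm{IP,int})}$. Both terms are nonnegative because $\bm D_f\succeq\bm0$ and $\bm L_f\preceq\bm0$. \emph{Wall faces:} the inviscid part vanishes by~\eqref{eq:inviscid-wall-entropy-cancellation}, and the viscous part is $\mathcal B_w^{(V)}$ from~\eqref{eq:generic-discrete-wall-entropy-contribution}. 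That equals $-\bm r_w^{\transpose}\bm\Lambda^X\bm r_w$ by Theorem~\ref{thm:cr-sat-wall-balance} or~\ref{thm:fee-sat-wall-balance}, and summing with positive face weights gives $-\Dcal_h^{(w,X)}$. \emph{Remaining boundary faces:} these are collected into $\Bcal^{\mathrm{other}}_{T_w,h}$ by definition. Adding the three contributions gives~\eqref{eq:two-sat-adapted-entropy-balance}.

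The monotonicity claim follows because $\Dcal_h^X\ge0$. The ballistic statement follows by multiplying by $T_w$, which is legitimate since $T_w>0$ is constant.

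\textbf{Main obstacle.} The delicate step is the bookkeeping that turns the per-element face terms into the nodewise expressions used in the wall identity~\eqref{eq:generic-discrete-wall-entropy-contribution}. Two points need care there. First, the physical viscous flux at the face must be exactly $\bm F^{(V),-}_{n,w}$ built from the $\bm\theta$ traces, so that it cancels against the $\bm g^q$ term. Second, the auxiliary SAT contraction must produce $-\bm r_w^{\transpose}\bm F^{(V),-}_{n,w}$ with the correct sign. I would first check these in the $d=1$ single-element case before writing the general summation.
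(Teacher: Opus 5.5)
Your proposal is correct and follows the paper's proof. You contract each element equation with $\bm w_{T_w,K}$ to obtain the element identity \eqref{app:eq:element-adapted-entropy-contraction}, sum over elements, pair interior and periodic faces via Tadmor's identity and the LDG/IP cancellation, invoke Theorems~\ref{thm:cr-sat-wall-balance} and~\ref{thm:fee-sat-wall-balance} on $\wall$, and collect the remaining faces into $\Bcal_{T_w,h}^{\mathrm{other}}$, with your sign bookkeeping (including the auxiliary-SAT term $-\bm r_w^{\transpose}\bm F_{n,w}^{(V),-}$) matching the paper's appendix derivation.
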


\begin{proof}
Sum the element entropy identity in
Eq.~\eqref{app:eq:element-adapted-entropy-contraction} over
$K\in\mathcal T_h$. The entropy-conservative inviscid contributions cancel across each
interior interface or identified periodic pair, while
$\bm D_f\succeq\bm0$ yields $\Dcal_h^{(I,\mathrm{int})}$.
The viscous-volume contributions sum to $\Dcal_h^{(V)}$.
The complementary LDG terms cancel, and $\bm L_f\preceq\bm0$ gives
$\Dcal_h^{(\mathrm{IP,int})}$.
Theorems~\ref{thm:cr-sat-wall-balance}
and~\ref{thm:fee-sat-wall-balance} provide $\Dcal_h^{(w,X)}$.
The remaining unpaired non-wall boundary contributions are collected
in $\Bcal_{T_w,h}^{\mathrm{other}}$.
\end{proof}

Define the discrete fluid total energy by
\begin{equation}
  \mathcal E_h(t)
  =
  \sum_{K\in\mathcal T_h}
    \langle1,(\rho E)_K\rangle_{K,h}.
  \label{eq:discrete-fluid-total-energy}
\end{equation}
Contracting Eq.~\eqref{eq:semidiscrete-conservative-equation} with
$(\bm1_K\otimes\be_E)^{\transpose}\mathbb P_K$ and summing over
elements cancels the common numerical fluxes across interior
interfaces and identified periodic pairs. Using the wall fluxes in
Eqs.~\eqref{eq:cr-wall-numerical-energy-flux}
and~\eqref{eq:fee-fourier-energy-identity}, the source-free energy
balances are
\begin{align}
  \frac{\mathrm d\mathcal E_h}{\mathrm dt}
  +\sum_{f\in\mathcal F_h^w}
   \left\langle F_{E,n,h}^{\mathrm{CR}},1\right\rangle_{f,h}
  +\mathcal F_{E,h}^{\mathrm{other}}
  &=0,
  \label{eq:cr-global-energy-balance}
  \\
  \frac{\mathrm d\mathcal E_h}{\mathrm dt}
  +\sum_{f\in\mathcal F_h^w}
   \left\langle q_n^-,1\right\rangle_{f,h}
  +\mathcal F_{E,h}^{\mathrm{other}}
  &=0.
  \label{eq:fee-global-energy-balance}
\end{align}
Thus, the CR--SAT uses its complete numerical outward total-energy flux, whereas
the FEE--SAT makes that flux equal to the one-sided numerical Fourier heat flux
at every wall node. The integral of the fluid total energy does not have to be 
constant because an
isothermal wall is an open energy boundary.

Since $S=S_{T_w}-\rho E/T_w$, define
\begin{equation}
  \Bcal_{S,h}^{\mathrm{other}}
  =
  \Bcal_{T_w,h}^{\mathrm{other}}
  +\frac{\mathcal F_{E,h}^{\mathrm{other}}}{T_w}.
  \label{eq:other-canonical-boundary-contribution}
\end{equation}
The canonical-entropy balances are
\begin{align}
  \frac{\mathrm d}{\mathrm dt}
  \sum_{K\in\mathcal T_h}\langle1,S_K\rangle_{K,h}
  +\Dcal_h^{\mathrm{CR}}
  &=
  \frac1{T_w}
  \sum_{f\in\mathcal F_h^w}
  \left\langle F_{E,n,h}^{\mathrm{CR}},1\right\rangle_{f,h}
  +\Bcal_{S,h}^{\mathrm{other}},
  \label{eq:cr-canonical-entropy-balance}
  \\
  \frac{\mathrm d}{\mathrm dt}
  \sum_{K\in\mathcal T_h}\langle1,S_K\rangle_{K,h}
  +\Dcal_h^{\mathrm{FEE}}
  &=
  \frac1{T_w}
  \sum_{f\in\mathcal F_h^w}
  \left\langle q_n^-,1\right\rangle_{f,h}
  +\Bcal_{S,h}^{\mathrm{other}}.
  \label{eq:fee-canonical-entropy-balance}
\end{align}
Neither relation is a homogeneous non-increase statement for the canonical
fluid entropy alone.

\begin{remark}[Conservative volume sources]
\label{rem:conservative-volume-sources}
If the nodal conservative source on element $K$ is
$\bm G_K\in\R^{mN_K}$, the following terms are added to the right-hand
sides of the adapted-entropy, total-energy, and canonical-entropy
balances, respectively:
\begin{equation*}
  \sum_{K\in\mathcal T_h}
  \langle\bm w_{T_w,K},\bm G_K\rangle_{K,h}^{(m)},
  \qquad
  \sum_{K\in\mathcal T_h}
  \langle\bm1_K\otimes\be_E,\bm G_K\rangle_{K,h}^{(m)},
  \qquad
  \sum_{K\in\mathcal T_h}
  \langle\bm w_K,\bm G_K\rangle_{K,h}^{(m)}.
\end{equation*}
These terms are not part of the source-free theorem.
\end{remark}

\begin{table}[t]
  \centering
  \caption{Structural comparison of the two isothermal-wall SAT
  formulations. Both use the common data of
  Subsection~\ref{subsec:isothermal-wall-common-data}.}
  \label{tab:two-sat-structural-comparison}
  \begin{tabularx}{\linewidth}{
    >{\raggedright\arraybackslash}p{0.25\linewidth}
    >{\raggedright\arraybackslash}X
    >{\raggedright\arraybackslash}X}
    \toprule
    Property & CR--SAT & FEE--SAT \\
    \midrule
    Viscous-flux completion
    & $\bm F_{n,w}^{(V),-}+\bm M_w^{\mathrm{CR}}$
    & $\bm F_{n,w}^{(V),-}+\bm K_w\bm r_w
      +\bm M_{w,u}^{\mathrm{FEE}}$ \\
    Penalty action
    & Complete wall-data residual
    & Momentum projection of the wall-data residual \\
    Numerical viscous energy component
    & $\bu^-\!\cdot\bm t^- -q_n^-
      +\be_E^{\transpose}\bm M_w^{\mathrm{CR}}$
    & $-q_n^-$ \\
    Numerical outward energy flux
    & $F_{E,n,h}^{\mathrm{CR}}$
    & $q_n^-$ \\
    Pointwise $S_{T_w}$ wall contribution
    & $-\bm r_w^{\transpose}\bm\Lambda_w^{\mathrm{CR}}\bm r_w
      \leq0$
    & $-\bm r_w^{\transpose}
      \bm\Lambda_{w,u}^{\mathrm{FEE}}\bm r_w\leq0$ \\
    Additional wall operations
    & One complete viscous-matrix residual product
    & One skew correction and one projected matrix product \\
    \bottomrule
  \end{tabularx}
\end{table}

\paragraph{Implementation}
The nodewise wall assembly is summarized in
\ref{app:wall-implementation-sequences}.

\section{Numerical verification}
\label{sec:results}

In this section, we numerically study the wall formulations from their local
algebraic identities to their behavior in assembled flow calculations,
complementing the proofs in Sections~\ref{sec:canonical-wall}--\ref{sec:semidiscrete-wall}.
Unless stated otherwise, calculations are performed in double-precision 
floating-point format.

We perform randomized wall tests to check the local energy and entropy
identities and the sign of the wall entropy contribution independently
of mesh assembly and time integration.
\ref{app:closed-form-and-pointwise-details} contains the test
definitions, random-input specifications, closed-form flux checks, and
complete pointwise tables. In both two and three dimensions, the matched
randomized wall tests satisfy the scaled equality tolerance of
$10^{-11}$, with a nonpositive wall contribution in every
entropy-stable test.

Building on these local checks, a forced one-dimensional
manufactured-solution study compares the spatial accuracy of the CR--SAT and
the FEE--SAT under identical volume and interior-interface operators, while
thermal relaxation tests the assembled total-energy and adapted-entropy
balances for all four combinations of wall formulation and wall-penalty
setting. The assessment then extends to three-dimensional calculations
that examine adapted-entropy balance closure, convergence of weak wall
traces on a curved geometry, and operation in a supersonic separated flow.

The balanced wall campaign and all assembled flow calculations use the
Chandrashekar entropy-conservative two-point inviscid flux~\cite{Chandrashekar2013}.
The one-dimensional simulations use an in-house Fortran code, whereas
the three-dimensional simulations use the in-house solver named
SSDC~\cite{ParsaniEtAl2021}, which is written in Fortran, C, and C++.
Its input files use YAML Ain't Markup Language (YAML).

\subsection{Common semi-discrete balance diagnostics}
\label{subsec:balance-diagnostics}

This subsection defines the instantaneous balance diagnostics used in
the thermal-relaxation test in
Subsection~\ref{subsec:thermal-relaxation-comparison}. The adapted-entropy
diagnostics are also used for the three-dimensional closed-domain test in
Subsection~\ref{subsubsec:three-dimensional-adapted-entropy}.
Both tests have a fixed mesh, no volumetric sources, and stationary
isothermal walls at one constant temperature on the entire boundary.

Let $X\in\{\mathrm{CR},\mathrm{FEE}\}$ identify the wall formulation.
Using $\mathcal E_h$ from
Eq.~\eqref{eq:discrete-fluid-total-energy}, define the discrete total
adapted entropy by
\begin{equation}
  \mathcal S_{T_w,h}(t)
  =
  \sum_{K\in\mathcal T_h}
  \langle1,S_{T_w,K}(t)\rangle_{K,h}
  =
  \sum_{K\in\mathcal T_h}
  \bm1_K^{\transpose}\mathsf P_K S_{T_w,K}(t).
  \label{eq:balance-discrete-adapted-entropy}
\end{equation}
The scalar quadrature matrix $\mathsf P_K$ contains the physical volume
weights, including the nodal Jacobian factors on a curvilinear mesh. Its vector 
extension is 
$\mathbb P_K$. Since these weights and $T_w$ are
constant in time, the conservative-variable contractions give
\begin{align}
  \frac{\mathrm d\mathcal E_h}{\mathrm dt}
  &=
  \sum_{K\in\mathcal T_h}
  (\bm1_K\otimes\be_E)^{\transpose}
  \mathbb P_K\frac{\mathrm d\bm q_K}{\mathrm dt},
  \label{eq:balance-energy-rate}
  \\
  \frac{\mathrm d\mathcal S_{T_w,h}}{\mathrm dt}
  &=
  \sum_{K\in\mathcal T_h}
  \bm w_{T_w,K}^{\transpose}
  \mathbb P_K\frac{\mathrm d\bm q_K}{\mathrm dt}.
  \label{eq:balance-adapted-entropy-rate}
\end{align}
Here, $\be_E$ selects the total-energy component, and
$\bm w_{T_w,K}$ contains the adapted, not the canonical, entropy
variables defined in Subsection~\ref{subsec:first-order-viscous-sbp}.
At each control state, $\mathrm d\bm q_K/\mathrm dt$ is obtained
from the complete assembled conservative right-hand side, including
the volume, interior-face, and wall contributions. 

Separately, the dissipation $\Dcal_h^X$ is evaluated from the quadratic
forms given in Subsection~\ref{subsec:two-sat-global-balances}. In particular,
its viscous-volume contribution is
\begin{equation}
  \Dcal_h^{(V)}(t)
  =
  \sum_{K\in\mathcal T_h}\sum_{i,j=1}^{d}
  \bm\theta_{i,K}^{\transpose}
  \mathbb P_K[\bm C_{ij}]_K\bm\theta_{j,K}
  \geq0.
  \label{eq:balance-viscous-volume-dissipation}
\end{equation}
This term represents viscosity and Fourier heat conduction.
The dissipation uses the auxiliary gradients and coefficient matrices of
the viscous operator, including the interface and wall liftings in the
gradients. The total
$\Dcal_h^X$ also includes the active inviscid-interface, IP, and wall
quadratic forms, $\Dcal_h^{(I,\mathrm{int})}$,
$\Dcal_h^{(\mathrm{IP,int})}$, and $\Dcal_h^{(w,X)}$, respectively.
All contributions are nonnegative under the stated admissibility,
constitutive, and penalty-matrix conditions. The paired LDG terms cancel
in the assembled entropy identity and do not provide a separate
dissipation.

The one-sided outward Fourier heat flux and the complete numerical
outward energy flux are integrated independently of the energy rate:
\begin{equation}
  \begin{aligned}
    \mathcal Q_{F,h}(t)
    &=
    \sum_{f\in\mathcal F_h^w}
    \langle q_n^-,1\rangle_{f,h},
    \qquad
    \mathcal F_{E,h}^{X}(t)
    &=
    \sum_{f\in\mathcal F_h^w}
    \langle F_{E,n,h}^{X},1\rangle_{f,h}.
  \end{aligned}
  \label{eq:thermal-relaxation-energy-quantities}
\end{equation}
The rates, dissipation, and wall fluxes use the same instantaneous state
and the gradients used by the spatial operator. Independence means that
the rates are obtained from the conservative contractions, not assigned
from the negatives of the dissipation or boundary-flux sums. Hence, no time
difference of successive energy or entropy values is used.

The complete numerical-energy residual, Fourier-only mismatch, and
adapted-entropy residual are
\begin{equation}
  \begin{aligned}
  \mathcal R_{E,h}^{X}
  &=
  \frac{\mathrm d\mathcal E_h}{\mathrm dt}
  +\mathcal F_{E,h}^{X},
  \qquad
  \mathcal M_{F,h}^{X}
  &=
  \frac{\mathrm d\mathcal E_h}{\mathrm dt}
  +\mathcal Q_{F,h},
  \qquad
  \mathcal R_{S_{T_w},h}^{X}
  &=
  \frac{\mathrm d\mathcal S_{T_w,h}}{\mathrm dt}
  +\Dcal_h^{X}.
  \end{aligned}
  \label{eq:thermal-relaxation-adapted-entropy-residual}
\end{equation}
The source-free balances in
Subsection~\ref{subsec:two-sat-global-balances} predict
$\mathcal R_{E,h}^{X}=0$ and $\mathcal R_{S_{T_w},h}^{X}=0$ in exact
arithmetic, under the provided spatial-operator assumptions. Thus, their
numerical cancellation tests the assembled semi-discrete identities. 
The adapted entropy decreases
when $\Dcal_h^X>0$.

By contrast, $\mathcal M_{F,h}^{X}$ is a balance residual only for
the FEE--SAT, for which $\mathcal F_{E,h}^{\mathrm{FEE}}=\mathcal Q_{F,h}$.
Indeed,
\begin{equation*}
  \mathcal M_{F,h}^{X}
  =
  \mathcal R_{E,h}^{X}
  +\mathcal Q_{F,h}-\mathcal F_{E,h}^{X}.
\end{equation*}
Therefore, for the CR--SAT, a nonzero Fourier-only mismatch measures the
intended finite-resolution flux difference, not a failure of the
complete energy balance.

We also define the regularized scaled residual
\begin{equation}
  \widehat{\mathcal R}_{S_{T_w},h}^{X}(t)
  =
  \frac{
    |\mathcal R_{S_{T_w},h}^{X}(t)|
  }{
    1+
    |\mathrm d\mathcal S_{T_w,h}/\mathrm dt|
    +\Dcal_h^X(t)
  }.
  \label{eq:balance-scaled-adapted-entropy-residual}
\end{equation}
The denominator measures the magnitudes of the two balance terms, with
a unit floor to avoid division by zero. These instantaneous values do
not establish a fully discrete entropy identity for the time integrator.
Fully discrete stability with respect to the adapted entropy can, for
example, be obtained using a suitable relaxation Runge--Kutta method
with $\mathcal S_{T_w,h}$ as the entropy functional, under the conditions
of Ref.~\cite{RanochaEtAl2020}.

\subsection{One-dimensional manufactured-solution accuracy study}
\label{subsec:matched-manufactured-solution}

The three spatial configurations used in this study are summarized below:
\begin{equation}
  \begin{array}{l|cccc}
    \text{configuration}
    & \bm D_f
    & \alpha
    & \beta_{\mathrm{int}}
    & \beta_w
    \\
    \hline
    \text{complete entropy-conservative}
    & \bm0 & 0 & 0 & 0
    \\
    \begin{gathered}\text{Entropy-conservative wall}\\[-0.2ex]
    \text{with entropy-stable interior}\end{gathered}
    & \bm Y_f|\bm\Lambda_f|\bm Y_f^{\transpose}
    & 0 & 0.25 & 0
    \\
    \text{complete entropy-stable}
    & \bm Y_f|\bm\Lambda_f|\bm Y_f^{\transpose}
    & 0 & 0.25 & 0.25
  \end{array}
  \label{eq:numerical-configuration-definitions}
\end{equation}
The middle configuration isolates the effect of interior stabilization while
retaining a boundary-entropy-conservative physical wall. For every row of
Eq.~\eqref{eq:numerical-configuration-definitions}, the CR--SAT and the FEE--SAT use
identical volume and interior-interface operators. 
The configuration names distinguish the optional numerical dissipation
in the interface and wall treatments. In the complete entropy-conservative
configuration, these numerical dissipation terms are disabled. This does
not imply entropy conservation for the viscous flow: physical viscous
and thermal dissipation remain active in every configuration.

\subsubsection{Test problem and error measures}
\label{subsubsec:matched-mms-setup}

We apply the method of manufactured solutions (MMS) to a forced
one-dimensional reduction of the three-dimensional compressible
Navier--Stokes model on $\Omega=[0,1]$,
\begin{equation}
  \partial_t\bQ+\partial_x\bm F_x^{(I)}
  =\partial_x\bm F_x^{(V)}+\bm G_{\mathrm{MMS}},
  \label{eq:matched-mms-governing-equation}
\end{equation}
with
\begin{equation}
  \tau_{xx}=\left(\frac43\mu+\zeta\right)\partial_xu,
  \qquad
  q_{h,x}=-\kappa\partial_xT.
  \label{eq:matched-mms-one-dimensional-constitutive-laws}
\end{equation}
The imposed exact primitive variables are
\begin{equation}
  \begin{aligned}
  \rho_{\mathrm{ex}}(x,t)
  &=1+0.08\exp(-t)\sin^2(2\pi x),
  \\
  u_{\mathrm{ex}}(x,t)
  &=0.08\exp(-t)\sin^2(2\pi x),
  \\
  T_{\mathrm{ex}}(x,t)
  &=1+0.08\exp(-t)\sin(4\pi x).
  \end{aligned}
  \label{eq:matched-mms-exact-solution}
\end{equation}
The initial condition is the exact state at $t=0$. At both endpoints,
$u_{\mathrm{ex}}=0$, $\partial_xu_{\mathrm{ex}}=0$,
$T_{\mathrm{ex}}=T_w=1$, and
$\partial_xT_{\mathrm{ex}}=0.32\pi\exp(-t)\neq0$, satisfying the
no-slip and isothermal data with nonzero conductive wall heat transfer.
We set $\gamma=1.4$, $R=1$, $\mu=10^{-3}$, $\zeta=0$, and the Prandtl
number $\mathrm{Pr}=0.72$, giving $c_v=2.5$, $c_p=3.5$, and
$\kappa=\mu c_p/\mathrm{Pr}=4.861111\times10^{-3}$.

With the exact fields $\rho=\rho_{\mathrm{ex}}$,
$u=u_{\mathrm{ex}}$, and $T=T_{\mathrm{ex}}$, set
$\vartheta=T-1$, $\chi=0.08\exp(-t)\cos(4\pi x)$, and
$\eta=4\mu/3+\zeta$. Analytic substitution into
Eq.~\eqref{eq:matched-mms-governing-equation} gives
\begingroup
\setlength{\jot}{0pt}
\begin{equation}
  \begin{aligned}
    \bm G_{\mathrm{MMS}}
    &=\bigl(G_\rho,G_{\rho u},0,0,G_{\rho E}\bigr)^{\transpose},
      \qquad G_\rho=-u+2\pi\vartheta(\rho+u),
    \\
    G_{\rho u}
    &=-u(\rho+u)+2\pi\vartheta\bigl(u^2+2\rho u+RT\bigr)
      +4\pi R\rho\chi-8\pi^2\eta\chi,
    \\
    G_{\rho E}
    &=-uE-\rho\bigl(c_v\vartheta+u^2\bigr)
      +2\pi\vartheta
      \bigl[c_pT(\rho+u)+\tfrac12u^2(3\rho+u)\bigr]
    \\
    &\quad+4\pi c_p\rho u\chi
      -4\pi^2\eta\bigl(\vartheta^2+2u\chi\bigr)
      +16\pi^2\kappa\vartheta.
  \end{aligned}
  \label{eq:matched-mms-source}
\end{equation}
\endgroup
Here, $E=c_vT+u^2/2$ and $c_p=c_v+R$. Both transverse momentum
sources vanish.

For $\psol\in\{1,2,3,4,5\}$, we use uniform meshes with
$N_{\mathrm{cell}}=2^\ell$ cells of size $h=2^{-\ell}$ at refinement
levels $\ell=0,\ldots,6$. For each mesh and degree, the fixed
Courant--Friedrichs--Lewy (CFL)-type time step is
\begin{equation}
  \Delta t
  =
  \min\left\{
    \frac{C_{\mathrm{adv}}h}{(2\psol+1)a_{\max}},
    \frac{C_{\mathrm{vis}}h^2}{(2\psol+1)^2\nu_{\max}}
  \right\}.
  \label{eq:matched-mms-time-step}
\end{equation}
Here, $C_{\mathrm{adv}}=0.25$ and $C_{\mathrm{vis}}=0.05$ are the
advective and viscous step factors, $a_{\max}=1.4$ is the prescribed
wave-speed bound, and the effective diffusive bound
\begin{equation*}
  \nu_{\max}
  =
  \max\left\{
    \frac{4\mu/3+\zeta}{\rho_{\min}^{\mathrm{CFL}}},
    \frac{\kappa}{\rho_{\min}^{\mathrm{CFL}}c_v}
  \right\}
\end{equation*}
uses the density lower bound $\rho_{\min}^{\mathrm{CFL}}=0.9$.
This conservative choice follows the explicit nodal DG
advective and diffusive scalings of Ref.~\cite{HesthavenWarburton2008}.
The solution is advanced to $t_f=0.5$ with the classical four-stage,
fourth-order Runge--Kutta (RK4) method~\cite{Butcher1996}, shortening
the final step to reach $t_f$ exactly. 

For both wall formulations in the complete entropy-stable configuration, halving
$\Delta t$ at $\psol=3$ and $\ell=6$ ($N_{\mathrm{cell}}=64$) changed
the temperature solution at $t_f=0.5$ by at most $5.18\times10^{-8}$
times the nominal-step temperature spatial error against the exact
manufactured temperature. Both quantities use the normalized discrete
SBP $L_2$ norm. This supports a negligible temporal contribution to
the temperature error in these two calculations.

At $t_f=0.5$, we evaluate the normalized SBP volume errors
$\lVert e_\varphi\rVert_{1,h}$ and $\lVert e_\varphi\rVert_{2,h}$,
and the nodal error $\lVert e_\varphi\rVert_{\infty,h}$, for
$\varphi\in\{\rho,u,T\}$, as defined in
\ref{app:constant-Tw-error-norms}. For any error $E_\ell$ on
refinement level $\ell$, the rate assigned to the finer level is
\begin{equation}
  r_\ell
  =
  \log_2\left(\frac{E_{\ell-1}}{E_\ell}\right),
  \qquad
  \ell=1,\ldots,6.
  \label{eq:matched-mms-observed-rate}
\end{equation}
No rate is reported at $\ell=0$. The complete paired errors and rates
are provided in Supplementary Material~S1.

\subsubsection{Convergence comparison}
\label{subsubsec:matched-mms-results}

Figure~\ref{fig:balanced-temperature-convergence} shows the discrete
$L_2$ temperature errors under mesh refinement for the complete entropy-conservative and
complete entropy-stable configurations. The density and velocity plots appear in
Section~1 of Supplementary Material~S1 (Figures~S1 and~S2, respectively).
Each matched CR--SAT/FEE--SAT comparison uses identical meshes, exact
solution, physical parameters, time integrator, and interior operators.

\begin{figure}[t]
  \centering
  \includegraphics[width=0.92\linewidth]
  {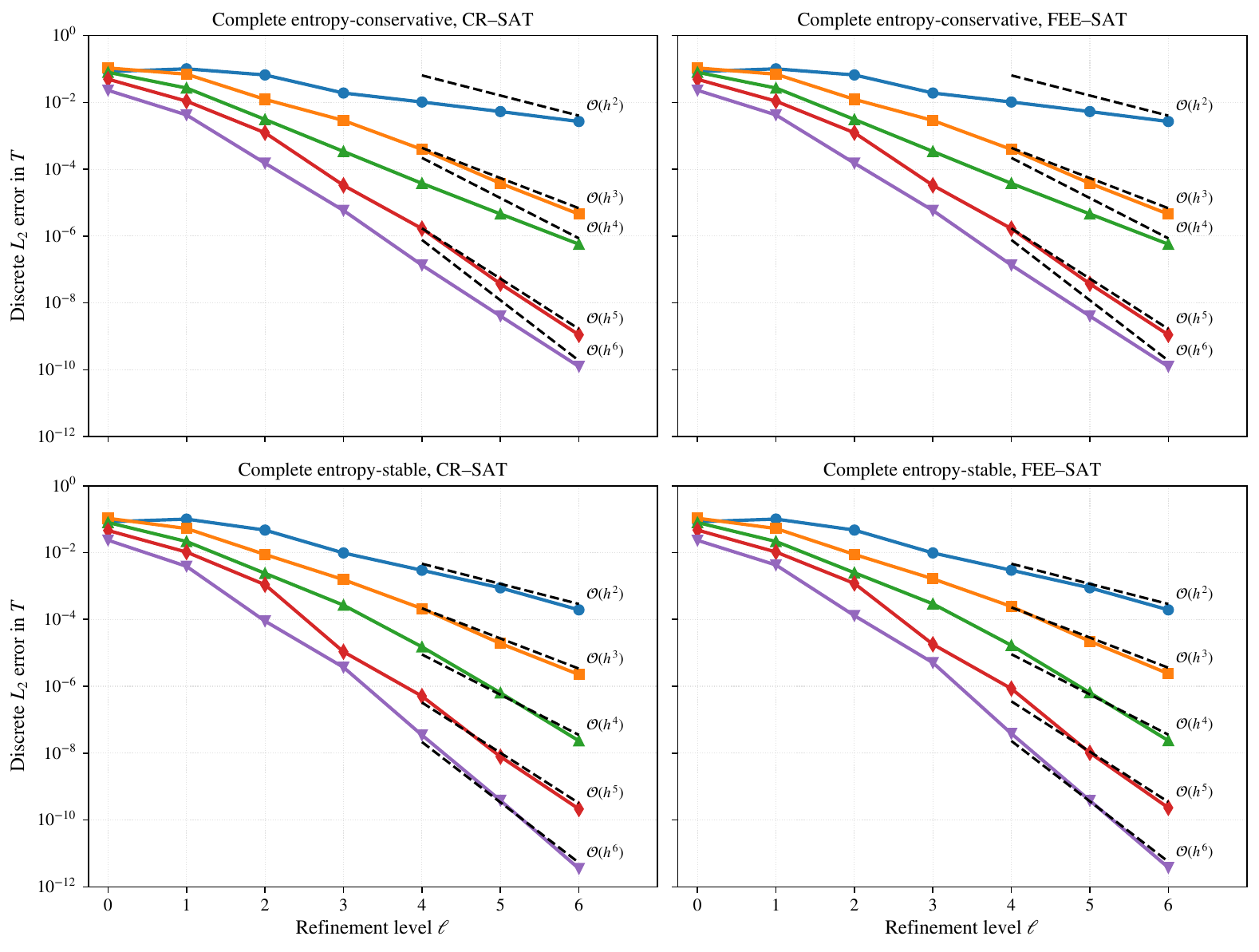}
  \vspace{0.2em}
  \includegraphics[width=0.70\linewidth]
  {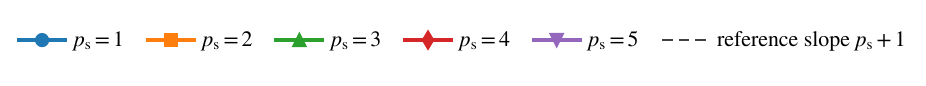}
  \caption{Discrete SBP $L_2$ temperature errors for the one-dimensional
  constant-$T_w$ manufactured solution. The panels compare the complete
  entropy-conservative and complete entropy-stable configurations using the CR--SAT and the FEE--SAT.
  The curves correspond to $\psol=1,\ldots,5$. Black dashed segments
  over the final three refinement levels show the
  $\mathcal O(h^{\psol+1})$ reference slopes.}
  \label{fig:balanced-temperature-convergence}
\end{figure}

Table~\ref{tab:compact-finest-rate-summary} complements these plots by
comparing the finest-pair $L_2$ rates for all three variables with the
nominal rate $\psol+1$. It also includes the entropy-conservative wall with entropy-stable interior
configuration, which is not shown in the plots. The finest pair
comprises $N_{\mathrm{cell}}=32$ and $64$ and each variable entry is ordered
$(\text{CR--SAT},\text{FEE--SAT})$.

\begin{table}[t]
\centering
\small
\caption{Finest-pair discrete SBP $L_2$ convergence rates for density,
velocity, and temperature under the three matched spatial configurations for the one-dimensional
constant-$T_w$ manufactured solution.
Each entry in the last three columns is the ordered pair
$(\text{CR--SAT},\text{FEE--SAT})$. The nominal rate is
$\psol+1$, and the finest pair corresponds to
$N_{\mathrm{cell}}=32$ and $64$, or $\ell=5$ and $6$.}
\label{tab:compact-finest-rate-summary}
\setlength{\tabcolsep}{4.3pt}
\renewcommand{\arraystretch}{1.05}
\begin{tabular}{ccccc}
\toprule
$\psol$ & Nominal rate
& \multicolumn{3}{c}{Finest-pair rate $(\mathrm{CR},\mathrm{FEE})$} \\
\cmidrule(lr){3-5}
& $\psol+1$ & $\rho$ & $u$ & $T$ \\
\midrule
\multicolumn{5}{l}{\emph{Complete entropy-conservative configuration}} \\
1 & 2 & $(0.988,0.988)$ & $(1.017,1.017)$ & $(0.989,0.989)$ \\
2 & 3 & $(3.028,3.028)$ & $(3.028,3.028)$ & $(3.062,3.062)$ \\
3 & 4 & $(3.040,3.040)$ & $(3.006,3.006)$ & $(3.006,3.006)$ \\
4 & 5 & $(5.075,5.075)$ & $(4.967,4.967)$ & $(5.072,5.072)$ \\
5 & 6 & $(5.017,5.017)$ & $(5.025,5.025)$ & $(5.016,5.016)$ \\
\addlinespace[0.35em]
\multicolumn{5}{l}{\emph{Entropy-conservative wall with entropy-stable interior}} \\
1 & 2 & $(1.958,1.958)$ & $(1.965,1.965)$ & $(2.192,2.192)$ \\
2 & 3 & $(3.167,3.167)$ & $(3.281,3.281)$ & $(3.218,3.218)$ \\
3 & 4 & $(4.133,4.133)$ & $(4.111,4.111)$ & $(4.757,4.757)$ \\
4 & 5 & $(5.097,5.097)$ & $(5.384,5.384)$ & $(5.462,5.462)$ \\
5 & 6 & $(6.054,6.054)$ & $(6.213,6.213)$ & $(6.665,6.665)$ \\
\addlinespace[0.35em]
\multicolumn{5}{l}{\emph{Complete entropy-stable configuration}} \\
1 & 2 & $(1.948,1.958)$ & $(1.964,1.965)$ & $(2.186,2.192)$ \\
2 & 3 & $(2.949,3.159)$ & $(3.272,3.276)$ & $(3.087,3.220)$ \\
3 & 4 & $(4.124,4.129)$ & $(4.113,4.113)$ & $(4.767,4.756)$ \\
4 & 5 & $(4.836,5.073)$ & $(5.363,5.364)$ & $(5.203,5.463)$ \\
5 & 6 & $(6.093,6.061)$ & $(6.213,6.212)$ & $(6.777,6.665)$ \\
\bottomrule
\end{tabular}
\end{table}

In the complete entropy-conservative configuration, the formulations give similar
$L_2$ error curves and identical finest-pair $L_2$ rates to the reported
precision in Table~\ref{tab:compact-finest-rate-summary}.
For the tested degrees, these rates are approximately $\psol$ for odd
$\psol$ and $\psol+1$ for even $\psol$. The same pattern persists when
one or two wall-adjacent cells are excluded from the error calculation.
Extending the $\psol=1,3$ CR--SAT runs to $N_{\mathrm{cell}}=128$ gives
velocity rates of $1.004$ and $3.003$, respectively.

The block for the entropy-conservative wall with entropy-stable interior
in Table~\ref{tab:compact-finest-rate-summary} shows recovery of the
$\psol+1$ trend for both formulations while retaining $\beta_w=0$.
Their finest-level $L_2$ errors, reported in Supplementary Material~S1,
differ by less than $5\times10^{-7}$ relative to the corresponding
CR--SAT error. For this manufactured solution, the comparison associates
the parity-sensitive reduction with the symmetric-LDG configuration
without numerical entropy dissipation, rather than with either wall
formulation.

For the complete entropy-stable configuration, the plots and tabulated rates again
show the $\psol+1$ trend, although the finite-grid error constants differ.
The finest-level $L_2$ errors in Supplementary Material~S1 give
FEE--SAT/CR--SAT ratios differing from unity by at most about $8.6\%$
for density and temperature and $0.34\%$ for velocity. Neither
formulation is consistently more accurate across variables and degrees
in this test.

\subsection{One-dimensional thermal-relaxation energy and adapted-entropy balances}
\label{subsec:thermal-relaxation-comparison}

This calculation tests the two finite-resolution total-energy structures
and the common adapted-entropy balance along a nontrivial solution trajectory.
We solve the source-free one-dimensional compressible Navier--Stokes
equations on $[0,1]$, with stationary no-slip isothermal walls at both
endpoints and $T_w=1$. There is no volumetric forcing, but heat exchange
with the walls is permitted. The pressure-equilibrated initial state is
\begin{equation*}
  T(x,0)=1+0.2x(1-x),
  \qquad
  u(x,0)=0,
  \qquad
  \rho(x,0)=\frac{1}{R T(x,0)},
  \qquad
  p(x,0)=1.
\end{equation*}
The nonuniform temperature satisfies the wall value and gives a positive
initial outward Fourier heat flux at both endpoints. The calculation uses
$N_{\mathrm{cell}}=8$ and $\psol=3$. The common interior coupling includes
Merriam dissipation, the symmetric LDG trace $\alpha=0$, and the IP
strength $\beta_{\mathrm{int}}=0.25$. Both the CR--SAT and the FEE--SAT are tested
with $\beta_w=0$ and $\beta_w=0.25$. Note that a boundary-entropy-conservative
wall setting removes the wall dissipation, not the interior numerical
dissipation.

The physical parameters are
\begin{equation*}
  \gamma=1.4,
  \qquad
  R=1,
  \qquad
  \mu=10^{-2},
  \qquad
  \mathrm{Pr}=0.72,
  \qquad
  \zeta=0.
\end{equation*}
The thermal conductivity is determined by
\begin{equation*}
  c_p=\frac{\gamma R}{\gamma-1},
  \qquad
  \kappa=\frac{\mu c_p}{\mathrm{Pr}}.
\end{equation*}

The RK4 time integration scheme advances the solution to $t_f=0.05$.
A safety factor of $0.65$ is applied to the implementation's
advective--diffusive time-step estimate. 
The following time-step statistics are representative of all four
thermal-relaxation calculations: $1{,}232$ accepted steps, with
$1.14535\times10^{-5}\leq\Delta t\leq4.06081\times10^{-5}$.
We evaluate the instantaneous balances using the common diagnostics in
Subsection~\ref{subsec:balance-diagnostics}. In the one-dimensional
reduction (see Remark \ref{re:dimension}), only the axial derivative contributes to
Eq.~\eqref{eq:balance-viscous-volume-dissipation}, while the
three-dimensional normal stress in
Eq.~\eqref{eq:matched-mms-one-dimensional-constitutive-laws} is retained.
The thermal-relaxation results below report unscaled residuals. We
report a separate stage-integrated energy check in
\ref{app:thermal-relaxation-details}.

Figure~\ref{fig:thermal-relaxation-compact} shows the two calculations
with active wall penalties. Over the recorded time window of all
four calculations, the complete energy residual is at most
$6.94\times10^{-18}$ and the adapted-entropy residual is at most
$7.98\times10^{-18}$ in absolute value. For the boundary-entropy-stable
CR--SAT, the Fourier-only mismatch reaches $1.78\times10^{-4}$ because
its complete wall flux includes finite-resolution mechanical-work and
complete-residual penalty contributions. For the FEE--SAT, the complete
energy residual and Fourier-only mismatch coincide and remain at
roundoff scale. We report the four-case summary and full time histories
in~\ref{app:thermal-relaxation-details}.

\begin{figure}[t]
  \centering
  \includegraphics[width=0.92\linewidth]
    {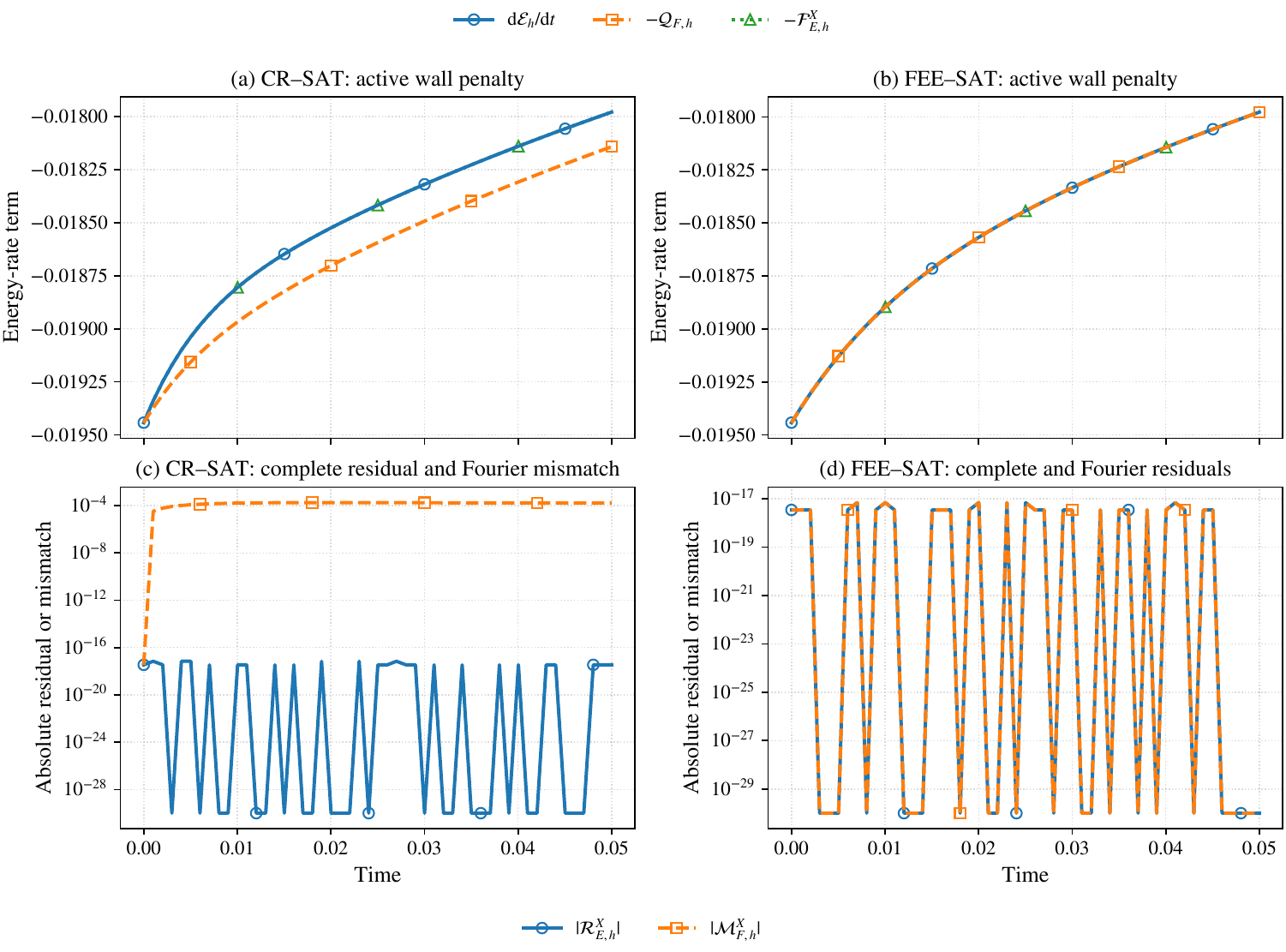}
  \caption{One-dimensional thermal-relaxation instantaneous energy test for
  the CR--SAT (left) and the FEE--SAT (right), with $\beta_w=0.25$ in both
  calculations. Panels (a) and (b) compare
  $\mathrm d\mathcal E_h/\mathrm dt$, $-\mathcal Q_{F,h}$, and
  $-\mathcal F_{E,h}^{X}$. The rate is evaluated from the conservative
  right-hand side using Eq.~\eqref{eq:balance-energy-rate}. The outward
  wall-flux sums are evaluated separately using
  Eq.~\eqref{eq:thermal-relaxation-energy-quantities}. Their minus signs
  permit direct comparison with the energy rate. Panels (c) and (d)
  show the unscaled absolute complete energy residual
  $|\mathcal R_{E,h}^{X}|$ and Fourier-only mismatch
  $|\mathcal M_{F,h}^{X}|$ on logarithmic axes. Both formulations close
  the complete balance at roundoff scale. The Fourier-only mismatch
  is required to vanish only for the FEE--SAT.}
  \label{fig:thermal-relaxation-compact}
\end{figure}

\subsection{Three-dimensional test cases}
\label{subsec:multidimensional-evidence}

The preceding tests isolate the local wall algebra, one-dimensional spatial
accuracy, and assembled one-dimensional energy and adapted-entropy balances.
We now increase the geometric and flow complexity. The three calculations
below test, respectively, closure of the assembled three-dimensional
adapted-entropy balance, convergence of the numerical wall traces to the
prescribed no-slip and isothermal data on smooth curved walls, and
application of the wall treatment to a supersonic separated flow.
Reference scales are specified in
\ref{app:multidimensional-reference-scales}.
Mesh-generation procedures are specified with the
individual test configurations. ParaView is used for flow visualization
\cite{Ayachit2015:ParaView}.

Theorem~\ref{thm:global-source-free-two-sat-balances} is stated for
conforming Cartesian affine elements. However, the CR--SAT and the FEE--SAT
wall identities are pointwise at each boundary quadrature node, so their
local algebraic form is unchanged on curvilinear and $hp$-nonconforming
meshes. The assembled global balance extends provided that the metric terms
satisfy the entropy-compatible discrete metric identities and geometric
conservation law \cite{CreanEtAl2018,ReynaNolascoEtAl2020Metrics}, 
and that the nonconforming interface operators satisfy
the required SBP-compatibility, conservation, and entropy-stability
conditions
\cite{DelReyFernandezEtAl2020CurvilinearTM,
DelReyFernandezEtAl2020PNonconformingNS,
DelReyFernandezEtAl2020HP}. Thus, no modification of the pointwise wall
proof is required. The cited geometric and interface constructions provide
the curvilinear and variable-degree extension used for the three-dimensional 
test cases reported next.

\subsubsection{Closed-domain adapted-entropy balance}
\label{subsubsec:three-dimensional-adapted-entropy}

This test assesses the assembled adapted-entropy balance for both
the CR--SAT and the FEE--SAT on a three-dimensional curvilinear mesh.
The fluid occupies the region between an outer cube and a spherical
cavity, both centered at the origin. The dimensional sphere diameter
is the reference length $L_0$, and the full dimensional cube edge is
$2L_0$. After normalizing lengths by $L_0$, the sphere diameter is $1$
and the cube edge is $L_{\mathrm{box}}=2$. The nondimensional fluid
domain is
\begin{equation*}
  \Omega_{\mathrm{cd}}
  =
  (-1,1)^3
  \setminus
  \overline{B_{1/2}(\bm0)},
\end{equation*}
where $B_{1/2}(\bm0)$ is the open ball of nondimensional radius $1/2$
centered at the origin. The cube faces lie at $x=\pm1$, $y=\pm1$, and
$z=\pm1$, and the spherical cavity is strictly contained within the cube.

Both wall boundary components are stationary, impermeable, no-slip
isothermal walls with $T_w=1$. Herein, ``closed domain'' means
impermeability (i.e., no penetration), not thermal insulation.

We nondimensionalize length, density, velocity, and temperature by
$L_0$, $\rho_0$, $U_0$, and $T_0$, respectively, as specified in
\ref{app:multidimensional-reference-scales}. Here, $\rho_0$ is the
uniform initial dimensional density, and $T_0=T_w^{\mathrm{dim}}$ is
the dimensional wall temperature.
The reference Mach, Reynolds, and Prandtl numbers are
\begin{equation*}
  \mathrm{Ma}=\frac{U_0}{a_0}=0.5,
  \qquad
  \mathrm{Re}=\frac{\rho_0U_0L_0}{\mu_0}=100,
  \qquad
  \mathrm{Pr}=\frac{\mu_0c_p}{\kappa_0}=0.72.
\end{equation*}
Here, $a_0=\sqrt{\gamma R T_0}$ is the reference sound speed,
$\mu_0$ and $\kappa_0$ are the dimensional reference viscosity and
thermal conductivity, and $R$ and $c_p$ are the dimensional specific
gas constant and specific heat at constant pressure. Thus,
$U_0=0.5a_0>0$ is the velocity normalization scale for this nonuniform
initial-value problem, not the speed of a uniform incoming flow.
These fixed reference parameters are not evaluated from the evolving
local velocity and temperature fields. In addition, no volumetric source
term is applied.

For $\bm x=(x,y,z)^{\transpose}\in\Omega_{\mathrm{cd}}$, the initial
primitive variables are
\begin{equation*}
  \begin{aligned}
    \rho(\bm x,0)
    &=1,
    \\
    u_1(\bm x,0)
    &=\frac{1}{16}+\sin(x)\cos(y)\cos(z),
    \\
    u_2(\bm x,0)
    &=\frac{1}{16}-\cos(x)\sin(y)\cos(z),
    \\
    u_3(\bm x,0)
    &=\frac{1}{16},
    \\
    T(\bm x,0)
    &=1+
    \frac{1}{16}
    \bigl[\cos(2x)+\cos(2y)\bigr]
    \bigl[\cos(z)+2\bigr].
  \end{aligned}
\end{equation*}
Here, $u_1$, $u_2$, and $u_3$ are the Cartesian velocity components.
The analytic initial velocity is divergence free, and the temperature
satisfies $T(\bm x,0)\geq5/8>0$.

The nonuniform initial velocity and temperature intentionally violate
the wall data and initiate a transient. The butterfly-type mesh is generated with Gmsh~\cite{Geuzaine2009:Gmsh}
and contains
$384$ curvilinear hexahedra, obtained by subdividing each geometric line
uniformly into four segments. The geometry representation is cubic,
and the solution polynomial degree is $\psol=3$.
Figure~\ref{fig:closed-domain-initial} shows the same mesh and LGL
solution nodes colored by the two initial fields.

\begin{figure}[!htb]
  \centering
  \begin{subfigure}[t]{0.35\linewidth}
    \centering
    \includegraphics[width=\linewidth]
      {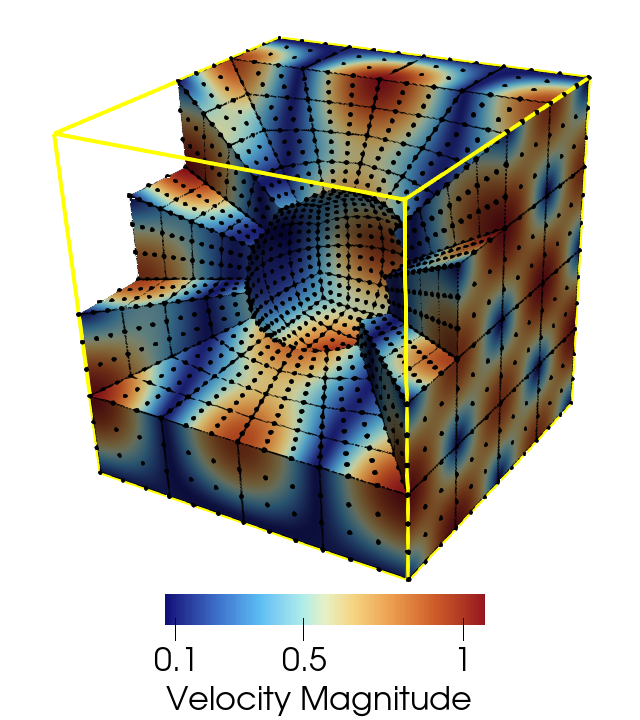}
    \caption{Initial velocity magnitude, $|\bu|$.}
  \end{subfigure}
  \hspace{0.04\linewidth}
  \begin{subfigure}[t]{0.35\linewidth}
    \centering
    \includegraphics[width=\linewidth]
      {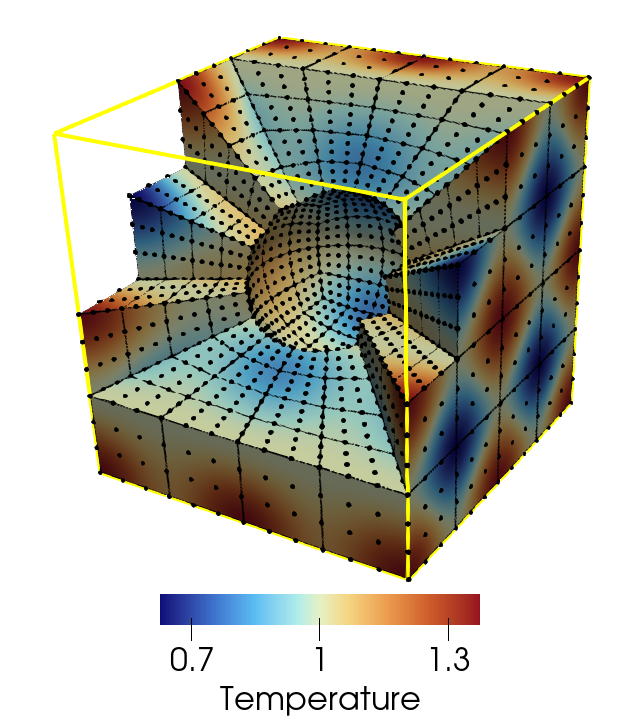}
    \caption{Initial temperature, $T$.}
  \end{subfigure}
  \caption{Three-dimensional closed-domain geometry, mesh, and initial fields at $t=0$.
  Black curves mark element edges, black points mark LGL solution nodes,
  and yellow curves outline the outer cube. The cut surfaces are
  visualization planes, not additional physical boundaries.}
  \label{fig:closed-domain-initial}
\end{figure}

We use both wall formulations in their boundary-entropy-conservative
settings. Unlike the thermal-relaxation test, both calculations disable
all optional numerical entropy dissipation:
\begin{equation*}
  \bm D_f=\bm0,
  \qquad
  \beta_{\mathrm{int}}=0,
  \qquad
  \beta_w=0.
\end{equation*}
The LDG coupling remains active, including the complementary
viscous-flux trace and the auxiliary-gradient wall SATs. The LDG
trace-bias parameter used in each run is $\alpha = 0$. Physical viscosity and Fourier heat conduction
remain active. Thus, in each calculation the total dissipation reduces
to $\Dcal_h^X=\Dcal_h^{(V)}$, where
$X\in\{\mathrm{CR},\mathrm{FEE}\}$ identifies the wall formulation.

Time integration uses the explicit Bogacki--Shampine embedded
Runge--Kutta pair of orders three and two (BS3)
\cite{BogackiShampine1989}. It is implemented as \texttt{TSRK3BS} in the
Portable, Extensible Toolkit for Scientific Computation (PETSc) \cite{BalayEtAl2026PETSc}.

We use the common diagnostic procedure in
Subsection~\ref{subsec:balance-diagnostics}, with $d=3$.
The adapted-entropy rate and viscous-volume dissipation are evaluated
separately using Eqs.~\eqref{eq:balance-adapted-entropy-rate}
and~\eqref{eq:balance-viscous-volume-dissipation}, respectively.
The superscript $\mathrm{cd}$ labels the closed-domain test and not a
different entropy. Thus, $\mathcal S_{T_w,h}^{\mathrm{cd}}$ is the functional
in Eq.~\eqref{eq:balance-discrete-adapted-entropy} evaluated for this
test. The dependence of the rate, dissipation, and residuals on $X$
is left implicit and we identify the wall formulation in each panel
and table row. Each balance uses quantities from the same
calculation and their values do not need to coincide between formulations.

For either formulation, the signed residual is
\begin{equation}
  \mathcal R_{S_{T_w},h}^{\mathrm{cd}}(t)
  =
  \frac{\mathrm d\mathcal S_{T_w,h}^{\mathrm{cd}}}{\mathrm dt}
  +\Dcal_h^{(V)}(t).
  \label{eq:three-dimensional-adapted-entropy-defect}
\end{equation}
The curvilinear extension of
Eq.~\eqref{eq:two-sat-adapted-entropy-balance} is subject to the
requirements stated in
Subsection~\ref{subsec:multidimensional-evidence}: entropy-compatible
metric terms satisfying the discrete metric identities and geometric
conservation law, together with the stated interface-compatibility
requirements wherever nonconforming interfaces are used. Under these
requirements and the source-free wall and dissipation settings
specified here, the balance gives
$\mathcal R_{S_{T_w},h}^{\mathrm{cd}}=0$ in exact arithmetic.
The regularized scaled residual
$\widehat{\mathcal R}_{S_{T_w},h}^{\mathrm{cd}}$ is obtained from
Eq.~\eqref{eq:balance-scaled-adapted-entropy-residual} using the same
case label and $\Dcal_h^{(V)}$.

Figure~\ref{fig:three-dimensional-adapted-entropy} shows the balance
contributions and signed residuals for the CR--SAT and the FEE--SAT over
$0\leq t\leq1$.

\begin{figure}[!htb]
  \centering
  \begin{subfigure}[t]{0.455\linewidth}
    \centering
    \includegraphics[width=\linewidth]
      {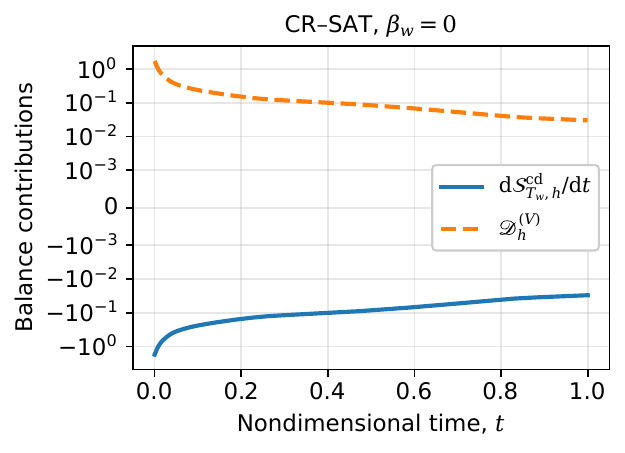}
    \caption{CR--SAT: balance contributions.}
    \label{fig:closed-domain-cr-terms}
  \end{subfigure}
  \hfill
  \begin{subfigure}[t]{0.455\linewidth}
    \centering
    \includegraphics[width=\linewidth]
      {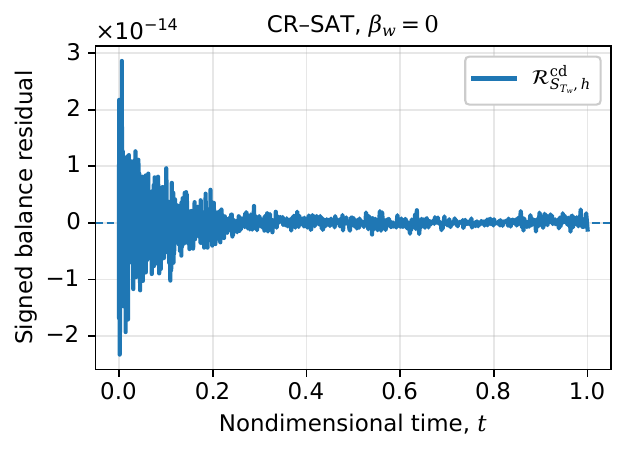}
    \caption{CR--SAT: signed residual.}
    \label{fig:closed-domain-cr-residual}
  \end{subfigure}

  \medskip

  \begin{subfigure}[t]{0.455\linewidth}
    \centering
    \includegraphics[width=\linewidth]
      {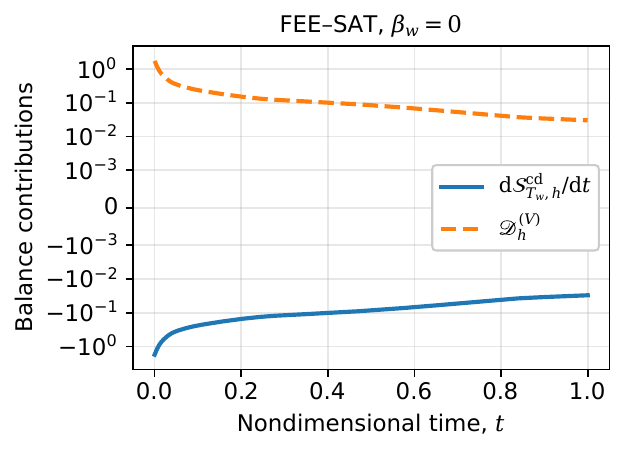}
    \caption{FEE--SAT: balance contributions.}
    \label{fig:closed-domain-fee-terms}
  \end{subfigure}
  \hfill
  \begin{subfigure}[t]{0.455\linewidth}
    \centering
    \includegraphics[width=\linewidth]
      {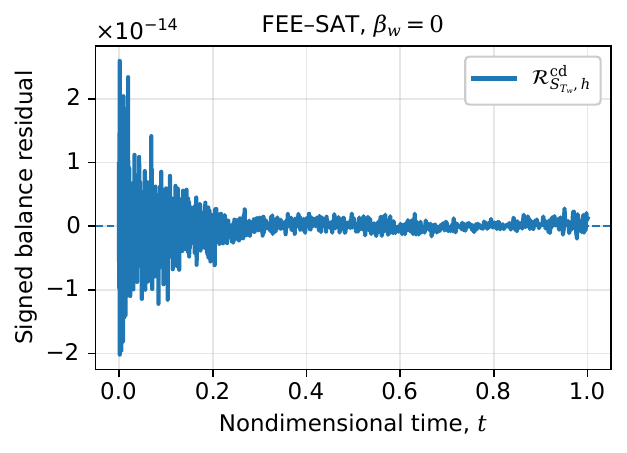}
    \caption{FEE--SAT: signed residual.}
    \label{fig:closed-domain-fee-residual}
  \end{subfigure}
  \caption{Three-dimensional closed-domain adapted-entropy balance for
  the CR--SAT (upper row) and the FEE--SAT (lower row), with $\beta_w=0$,
  active LDG coupling, and all optional numerical entropy dissipation
  disabled. Panels (a) and (c) show the adapted-entropy rate
  $\mathrm d\mathcal S_{T_w,h}^{\mathrm{cd}}/\mathrm dt$,
  which is negative in these calculations, and the positive
  viscous-volume dissipation $\Dcal_h^{(V)}$. 
  Their vertical axes are
  symmetric logarithmic, with a linear region for ordinate magnitudes
  below $10^{-3}$. Panels (b) and (d) show their signed sum,
  $\mathcal R_{S_{T_w},h}^{\mathrm{cd}}$, on linear axes, with dashed
  zero reference lines. Each panel uses quantities from its stated wall
  formulation.}
  \label{fig:three-dimensional-adapted-entropy}
\end{figure}

At every recorded state, the adapted-entropy
rate is negative and the viscous-volume dissipation is positive. In
both calculations, the dissipation decreases along the recorded
history from approximately $1.7463$ at $t=0$ to
$3.0276\times10^{-2}$ for the CR--SAT and $3.0329\times10^{-2}$ for
the FEE--SAT at $t=1$. Therefore, the two contributions remain nonzero at the final time,
while their signed sum fluctuates around zero at a level consistent
with accumulated roundoff in double-precision arithmetic.

Table~\ref{tab:closed-domain-adapted-entropy-balance} reports the
maximum absolute and scaled residuals. The maxima are evaluated over
all recorded states, separately for each calculation.
These results provide numerical evidence for closure of the assembled
semi-discrete adapted-entropy balance for both wall formulations, without
added numerical entropy dissipation.

\begin{table}[!htb]
  \centering
  \caption{Three-dimensional closed-domain adapted-entropy balance test.
  The maxima are taken over the recorded sampling times of each
  calculation in $0\leq t\leq1$, including both endpoints. The scaled
  residual is evaluated at each sample before taking its maximum.}
  \label{tab:closed-domain-adapted-entropy-balance}
  \begin{tabular}{@{}lccc@{}}
    \toprule
    Wall formulation & Recorded states &
    $\max_t|\mathcal R_{S_{T_w},h}^{\mathrm{cd}}|$ &
    $\max_t\widehat{\mathcal R}_{S_{T_w},h}^{\mathrm{cd}}$ \\
    \midrule
    CR--SAT & $1{,}378$ & $2.86\times10^{-14}$ & $8.85\times10^{-15}$ \\
    FEE--SAT & $1{,}379$ & $2.60\times10^{-14}$ & $1.03\times10^{-14}$ \\
    \bottomrule
  \end{tabular}
\end{table}

\FloatBarrier

\subsubsection{Curved-wall trace convergence in an annular pipe}
\label{subsubsec:three-dimensional-wall-trace-convergence}

The annular-pipe calculation tests whether the fluid-side numerical wall
traces produced by the CR--SAT and the FEE--SAT approach the prescribed data under
curved-grid refinement. Both wall formulations are used in their
entropy-stable settings for every solution degree and refinement level.
These are our default wall settings for robustness in three-dimensional
simulations. 

The annular pipe is aligned with the $z$ axis. The axial boundaries are
periodic, and both cylindrical walls are
stationary, no-slip, and isothermal with $T_w=1$. Axially homogeneous
momentum sources generate azimuthal and axial motion, while a total-energy
source raises the interior temperature above the wall value. Therefore, the resulting
steady state is nontrivial even though the exact wall data are
constant.

We use the nondimensionalization and reference-parameter definitions in
\ref{app:multidimensional-reference-scales}. For this test, $L_0$ is the
dimensional inner-cylinder diameter, $\rho_0$ is the uniform initial
dimensional density, and $T_0=T_w^{\mathrm{dim}}$ equals the uniform
initial dimensional temperature. The nondimensional inner and outer
radii are $1/2$ and $2$, respectively, and the axial length is $4$.
The reference Mach, Reynolds, and Prandtl numbers are
\begin{equation*}
  \mathrm{Ma}=0.1,
  \qquad
  \mathrm{Re}=1,
  \qquad
  \mathrm{Pr}=0.7.
\end{equation*}
The velocity scale $U_0=0.1a_0>0$ is distinct from the initial fluid
velocity, which is zero. 

The uniform initial state is
\begin{equation*}
  \rho(\bm x,0)=1,
  \qquad
  \bu(\bm x,0)=\bm0,
  \qquad
  T(\bm x,0)=1.
\end{equation*}
The prescribed conservative source is
\begin{equation*}
  \bm G_{\mathrm{ann}}
  =
  \begin{pmatrix}
    0\\
    -3y/(\mathrm{Re}\,r)\\
    3x/(\mathrm{Re}\,r)\\
    3/\mathrm{Re}\\
    1/\mathrm{Pr}
  \end{pmatrix},
  \qquad
  r=\sqrt{x^2+y^2}.
\end{equation*}
The transverse momentum source is azimuthal, the third momentum component
acts in the axial direction, and the last component is the total-energy
source.

The structured base mesh is generated with an in-house nonuniform rational
B-spline (NURBS) geometry tool. It contains three axial, sixteen
circumferential, and four radial cells, for a total of $192$ hexahedra.
The radial distribution is symmetric and nonuniform, with a wall-adjacent
cell size of $1/8$ on the base mesh.

The radial and circumferential partitions are refined uniformly from one
level to the next, while the three-cell periodic axial partition is fixed.
For $\ell=0,\ldots,4$, a representative transverse mesh scale satisfies
$h_\ell=2^{-\ell}h_0$, where $h$ refers to the refined cross-section,
not to a uniformly refined three-dimensional mesh.

The cell counts are
\begin{equation*}
  N_{\mathrm{cell},\ell}=192\,4^\ell,
  \qquad
  \ell=0,\ldots,4.
\end{equation*}
Steady states are obtained by implicit pseudo-transient continuation.
Both wall formulations use the entropy-scaled characteristic
inviscid-interface dissipation of
Subsection~\ref{subsec:interface-2015-coupling}, with the eigensystem
evaluated at the Roe-averaged state. The LDG trace-bias parameter is
$\alpha=0$. The values of $\beta_{\mathrm{int}}$ and $\beta_w$ for each
wall formulation are both $0.25$. These settings are fixed
across $\psol=1,\ldots,4$ and $\ell=0,\ldots,4$ within each wall
formulation. Steady-state convergence is declared when the discrete $L_2$ norm of the
residual of each of the five conservation equations is below $10^{-11}$.

Figure~\ref{fig:annular-pipe-mesh-and-profiles} shows the base
mesh and analytical radial profiles evaluated on the finest grid. Their reported peak
values in the nondimensional variables of
\ref{app:multidimensional-reference-scales} are
\begin{equation*}
  \max_{1/2\leq r\leq2}u_\theta=0.747,
  \qquad
  \max_{1/2\leq r\leq2}u_z=0.886,
  \qquad
  \max_{1/2\leq r\leq2}T=1.293.
\end{equation*}

\begin{figure}[!htb]
  \centering
  \begin{subfigure}[t]{0.38\linewidth}
    \centering
    \includegraphics[width=0.75\linewidth]
      {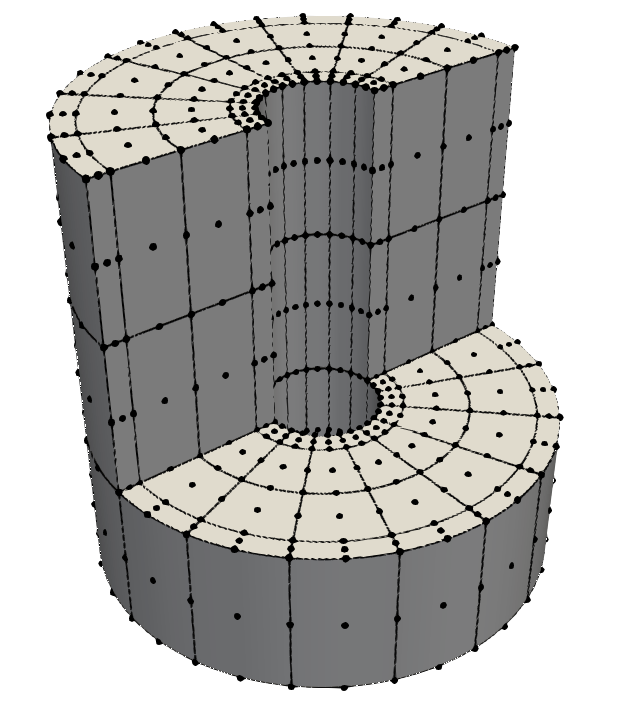}
    \caption{Base mesh and solution nodes, $\psol=2$.}
  \end{subfigure}
  \hspace{0.04\linewidth}
  \begin{subfigure}[t]{0.38\linewidth}
    \centering
    \includegraphics[width=0.75\linewidth]
      {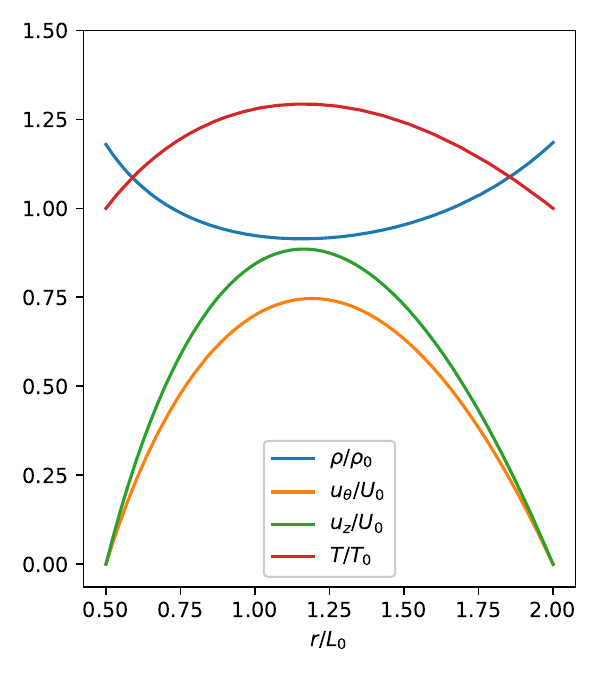}
    \caption{Radial profiles.}
  \end{subfigure}
  \caption{Three-dimensional annular-pipe mesh and representative steady radial profiles.
  Panel (a) shows the base mesh and degree-two solution nodes. Panel (b)
  shows density, azimuthal velocity $u_\theta$, axial velocity $u_z$,
  and temperature along $y=0$, $z=2$, and $1/2\leq r\leq2$, where
  $r=\sqrt{x^2+y^2}$ and the sampling coordinates are nondimensional.
  In the axis and legend labels of panel (b), the numerators denote
  dimensional quantities. Their ratios to $L_0$, $\rho_0$, $U_0$, and
  $T_0$ are the nondimensional variables defined in
  \ref{app:multidimensional-reference-scales}. These profiles show the steady 
analytical solution evaluated on the
finest grid.}
  \label{fig:annular-pipe-mesh-and-profiles}
\end{figure}

Let $\wall^{\mathrm{ann}}$ denote the union of the two cylindrical walls.
Using the physical SBP face inner product defined in
Subsection~\ref{subsec:tensor-product-sbp}, the discrete wall errors are
\begin{equation}
  E_u^w=
  \left[
    \sum_{f\subset\wall^{\mathrm{ann}}}
    \langle\lVert\bu_h-\bu_w\rVert_2^2,1\rangle_{f,h}
  \right]^{1/2},
  \qquad
  E_T^w=
  \left[
    \sum_{f\subset\wall^{\mathrm{ann}}}
    \langle(T_h-T_w)^2,1\rangle_{f,h}
  \right]^{1/2}.
  \label{eq:annular-pipe-wall-trace-errors}
\end{equation}
Here, $\bu_h$ and $T_h$ are the fluid-side numerical traces at the wall
quadrature nodes, $\bu_w=\bm0$ and $T_w =1$. The physical face-quadrature matrix
$\mathsf P_f$ includes the LGL face weights and the surface Jacobian
factors of the curved faces. These are unnormalized discrete wall $L_2$
errors: no division by wall area or node count is applied, and $E_u^w$
combines all three Cartesian velocity components. The superscript $w$
identifies wall errors. Each error is evaluated separately for each wall
formulation.
Section~2 of Supplementary Material~S1 gives the equivalent SBP matrix
expressions and the complete level-by-level errors and rates in
Tables~S10 and~S11 for velocity and temperature, respectively.

Figure~\ref{fig:annular-pipe-wall-convergence} reports both errors for
$\psol=1,\ldots,4$ and both wall formulations.
\begin{figure}[!htb]
  \centering
  \includegraphics[width=0.95\linewidth]
  {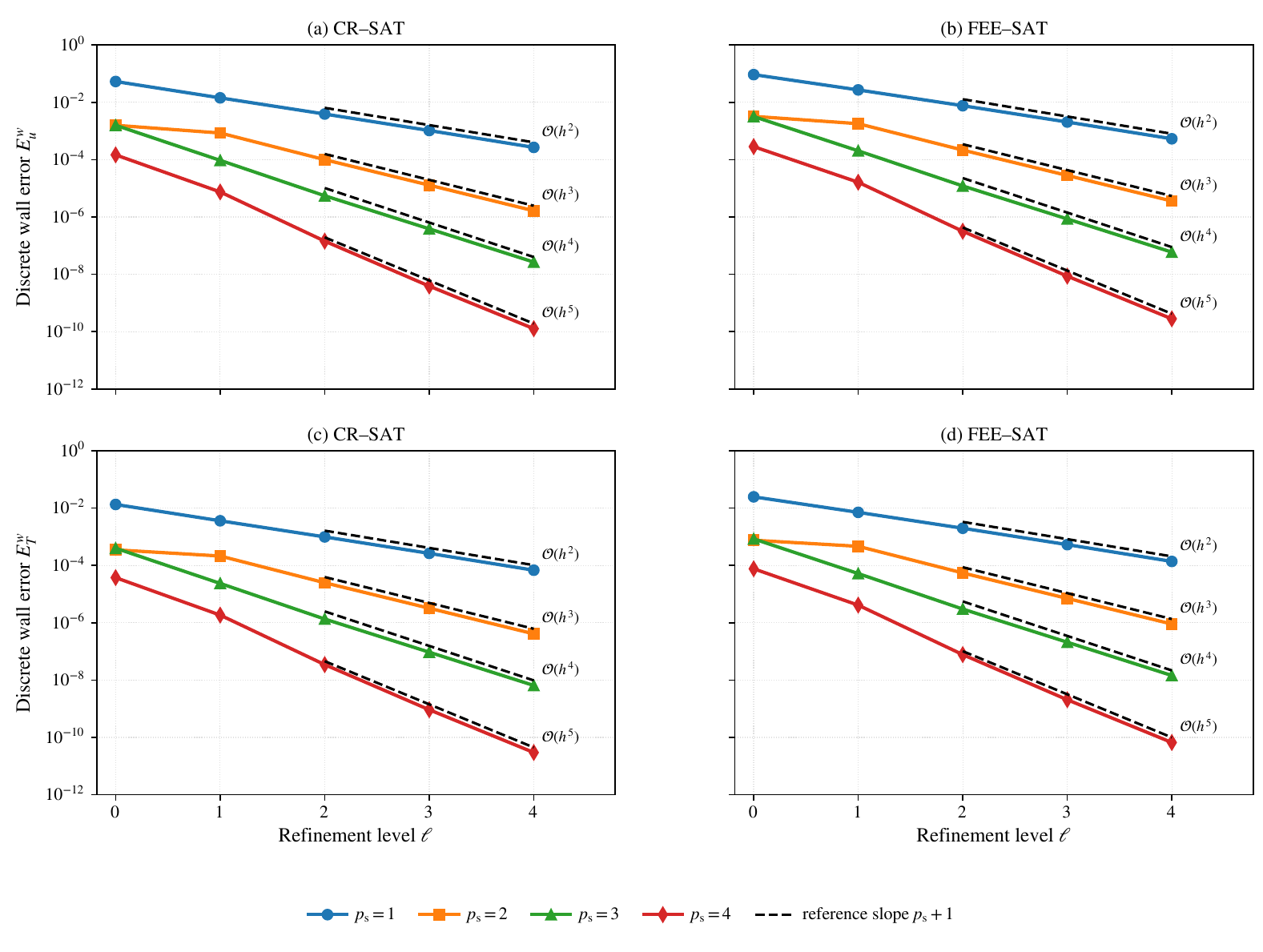}
  \caption{Three-dimensional annular-pipe wall-trace convergence for the entropy-stable
  settings of the CR--SAT (left) and the FEE--SAT (right), with
  $\psol=1,\ldots,4$. Panels (a) and (b) show the no-slip wall error
  $E_u^w$. Panels (c) and (d) show the isothermal-wall error $E_T^w$.
  Each refinement halves the radial and circumferential mesh scales,
  while the axial partition is fixed. The colors and markers identify
  the solution degree as in the one-dimensional convergence figures.
  Black dashed segments over levels $\ell=2,3,4$ show the 
  $\mathcal O(h^{\psol+1})$ reference slopes.}
  \label{fig:annular-pipe-wall-convergence}
\end{figure}
All reported errors decrease
under refinement. The first degree-two mesh pair is preasymptotic for
both formulations. On finer meshes, the trends are consistent with the
 $\mathcal O(h^{\psol+1})$ reference slopes.
Table~\ref{tab:annular-pipe-finest-rates} reports the rates on the finest
pair, computed as in Eq.~\eqref{eq:matched-mms-observed-rate} using the
respective wall errors on levels $\ell=3$ and $\ell=4$. Both formulations 
exhibit similar fine-grid rates, close to the
orders two through five. At every reported degree and refinement level,
the CR--SAT has smaller wall errors than the FEE--SAT in this test.

\begin{table}[!htb]
  \centering
  \caption{Three-dimensional annular-pipe wall-trace convergence rates for the
  entropy-stable wall settings. Observed rates between levels
  $\ell=3$ and $\ell=4$, computed using the errors in
  Eq.~\eqref{eq:annular-pipe-wall-trace-errors}, are compared with the
  nominal rate $\psol+1$. Observed rates are rounded to three decimal
  places.}
  \label{tab:annular-pipe-finest-rates}
  \begin{tabular}{@{}cccccc@{}}
    \toprule
    & Nominal rate & \multicolumn{2}{c}{CR--SAT}
    & \multicolumn{2}{c}{FEE--SAT} \\
    \cmidrule(lr){3-4}\cmidrule(l){5-6}
    $\psol$ & $\psol+1$
    & $E_u^w$ rate & $E_T^w$ rate
    & $E_u^w$ rate & $E_T^w$ rate \\
    \midrule
    $1$ & $2$ & $1.954$ & $1.952$ & $1.942$ & $1.947$ \\
    $2$ & $3$ & $2.969$ & $2.968$ & $2.963$ & $2.968$ \\
    $3$ & $4$ & $3.854$ & $3.857$ & $3.850$ & $3.856$ \\
    $4$ & $5$ & $4.933$ & $4.956$ & $4.945$ & $4.956$ \\
    \bottomrule
  \end{tabular}
\end{table}

\FloatBarrier

\subsubsection{Supersonic inclined-cube stress test}
\label{subsubsec:supersonic-inclined-cube}

The third multidimensional calculation is an operational stress test, not an
accuracy study. Figure~\ref{fig:inclined-cube-setup} shows the setup and
mesh for a stationary cube exposed to a uniform, undisturbed incoming
flow. The subscript
$\infty$ denotes this
upstream free-stream state, whereas $0$ denotes the reference scales, not a stagnation
state. With the nondimensionalization in
\ref{app:multidimensional-reference-scales}, the flow reference quantities
are selected from the free stream and the reference length is the dimensional
cube edge length:
\begin{equation}
  \begin{aligned}
    L_0&=l_e^{\mathrm{dim}},
    &\rho_0&=\rho_\infty^{\mathrm{dim}},
    \\
    U_0&=|\bu_\infty^{\mathrm{dim}}|,
    &T_0&=T_\infty^{\mathrm{dim}}.
  \end{aligned}
  \label{eq:inclined-cube-reference-scales}
\end{equation}

\begin{figure}[!htb]
  \centering
  \captionsetup[subfigure]{
    font=small,
    labelformat=parens,
    labelsep=space,
    justification=centering,
    singlelinecheck=false,
    skip=0.45em
  }

  \begin{subfigure}[c]{0.55\linewidth}
    \centering
    \includegraphics[width=\linewidth]
    {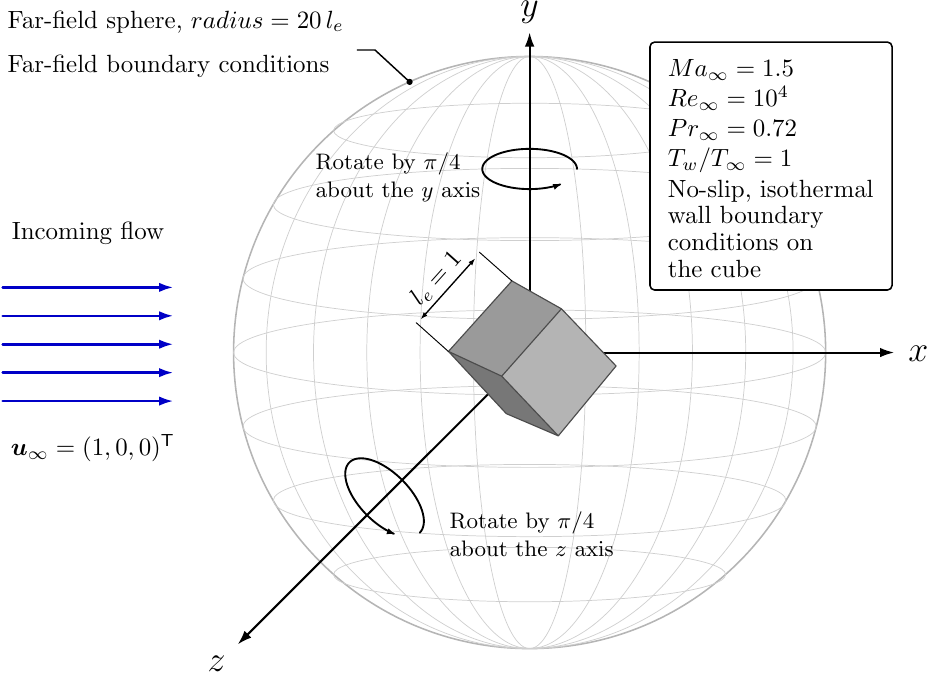}
    \caption{Schematic setup (not to scale)}
    \label{subfig:inclined-cube-setup}
  \end{subfigure}\hfill
  \begin{minipage}[c]{0.45\linewidth}
    \centering
    \begin{subfigure}[t]{\linewidth}
      \centering
      \includegraphics[width=0.50\linewidth]
        {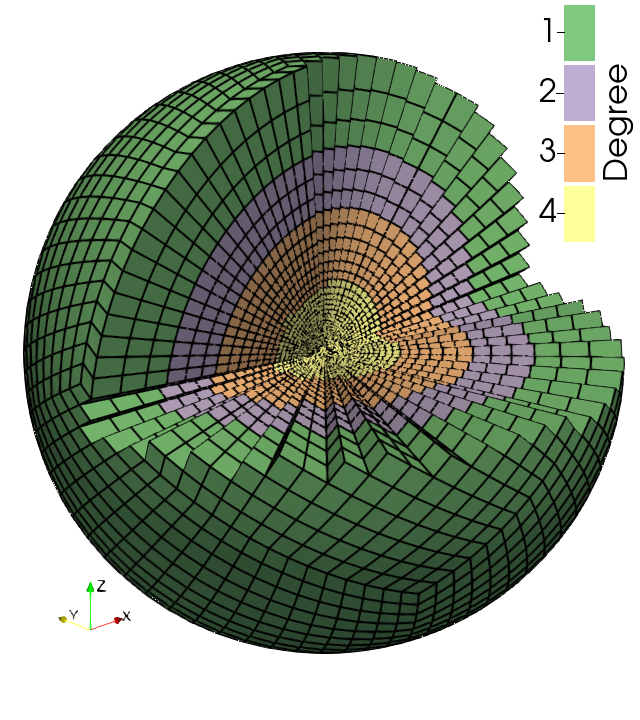}
      \caption{Domain mesh and solution-degree regions}
      \label{subfig:inclined-cube-mesh-full}
    \end{subfigure}

    \par\medskip

    \begin{subfigure}[t]{\linewidth}
      \centering
      \includegraphics[width=0.50\linewidth]
        {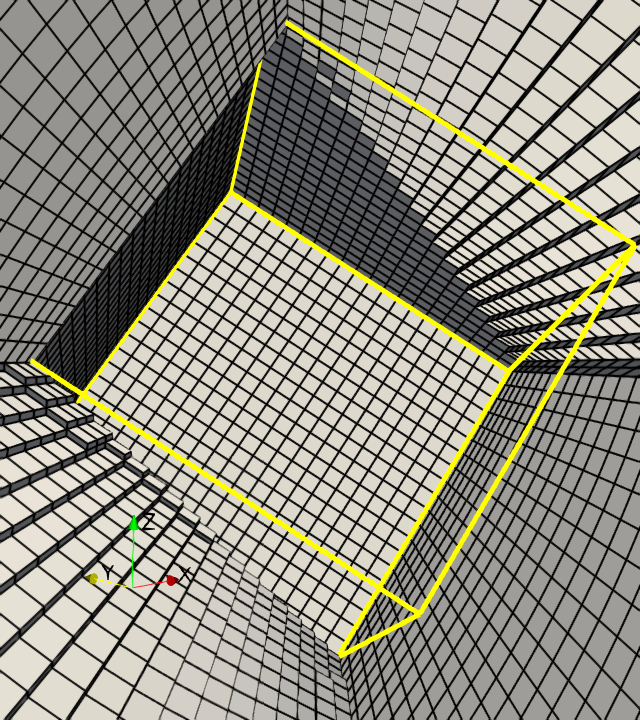}
      \caption{Near-body wall-normal grading}
      \label{subfig:inclined-cube-mesh-detail}
    \end{subfigure}
  \end{minipage}

  \caption{Setup and hexahedral mesh for the three-dimensional
  supersonic inclined-cube test. In panel (a), the incoming-flow arrows
  identify the uniform, undisturbed upstream state used for the reference
  quantities in Eq.~\eqref{eq:inclined-cube-reference-scales}.
  The stationary, no-slip cube is centered in the spherical far-field
  boundary and has prescribed temperature $T_w=T_\infty=1$.
  Panel (b) shows the complete spherical domain and the regions with
  $\psol=1,\ldots,4$. Panel (c) shows the near-body wall-normal grading.}
  \label{fig:inclined-cube-setup}
\end{figure}

The reference viscosity $\mu_0$ and thermal conductivity $\kappa_0$ are their
dimensional upstream values, and $a_0=\sqrt{\gamma R T_0}$ is the upstream
sound speed. Therefore, the reference parameters are
\begin{equation*}
  \mathrm{Ma}=\mathrm{Ma}_\infty=1.5,
  \qquad
  \mathrm{Re}=\mathrm{Re}_\infty=10^4,
  \qquad
  \mathrm{Pr}=\mathrm{Pr}_\infty=0.72.
\end{equation*}
These use the fixed upstream quantities, not the local disturbed state. The
nondimensional cube edge length is $l_e=1$, and the free-stream state is
\begin{equation*}
  \rho_\infty=1,
  \qquad
  \bu_\infty=(1,0,0)^{\transpose},
  \qquad
  T_\infty=1.
\end{equation*}
The unit nondimensional speed reflects scaling by $U_0$. However, the Mach number
remains $U_0/a_0=1.5$.

Starting with its edges aligned with the fixed global Cartesian axes, the
cube is rotated first by $\pi/4$ about the $z$ axis and then by $\pi/4$
about the $y$ axis. One body diagonal then makes the angle
\begin{equation*}
  \theta_{\mathrm{cube}}
  =
  \frac{1}{2}\arcsin\left(\frac{1}{3}\right)
  \approx 9.74^\circ
\end{equation*}
with the streamwise $x$ axis. The flow first impinges on an upstream vertex
and subsequently separates at the sharp edges. The stationary cube satisfies
$\bu_w=\bm0$ and $T_w=T_\infty=1$, so its temperature equals the undisturbed
upstream temperature. Far-field conditions are imposed on an outer sphere of
radius $20\,l_e$. No sponge layer, shock-capturing or shock-tracking
mechanism, artificial viscosity, or other shock-specific treatment is used.

The butterfly-type mesh is generated with Gmsh~\cite{Geuzaine2009:Gmsh}.
Each geometric line has twenty segments with progression factor $1.125$.
It contains $96{,}000$ hexahedra strongly graded toward the cube. The
solution degree varies by region from $\psol=1$ near the outer sphere to
$\psol=4$ near the body. Figure~\ref{fig:inclined-cube-setup} shows the
complete mesh, solution-degree distribution, and near-body wall-normal
grading.
This calculation uses CR--SAT with BS3 time integration. The values of
$\beta_{\mathrm{int}}$ and $\beta_w$ are both $0.25$.
The calculation starts from the uniform free stream at $t=0$ and advances
to nondimensional time $t=50$.

Panels (a)--(d) of Figure~\ref{fig:inclined-cube-flow} show the computed
fields at $t=50$.
Density, temperature, and velocity magnitude use the reference scales
$\rho_0$, $T_0$, and $U_0$, respectively. The density and temperature fields
show a detached bow shock and a compressed, heated windward region. A
separated low-density, low-speed wake develops downstream of the sharp edges.
The symmetry-plane speed shows the near-body wake, and the downstream plane
shows a three-lobed velocity-deficit pattern. These lobes cannot be identified
as vortices from velocity magnitude alone.

\begin{figure}[!htb]
  \centering
  \captionsetup[subfigure]{
    font=small,
    labelformat=parens,
    labelsep=space,
    justification=centering,
    singlelinecheck=false,
    skip=0.45em
  }

  \begin{subfigure}[t]{0.27\linewidth}
    \centering
    \vspace{0pt}
    \includegraphics[width=\linewidth]
      {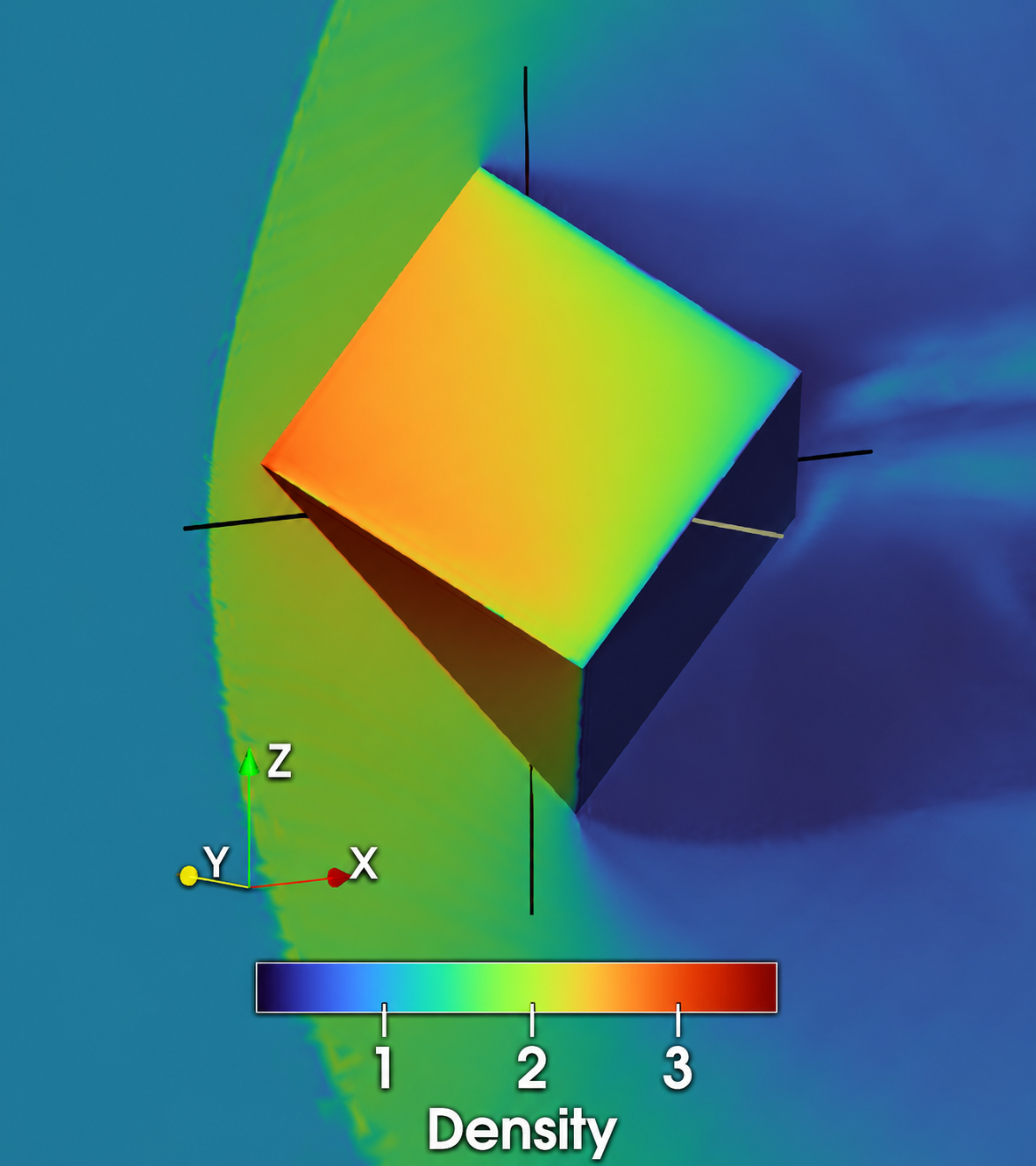}
    \caption{Density, $\rho$}
  \end{subfigure}\hfill
  \begin{subfigure}[t]{0.27\linewidth}
    \centering
    \vspace{0pt}
    \includegraphics[width=\linewidth]
      {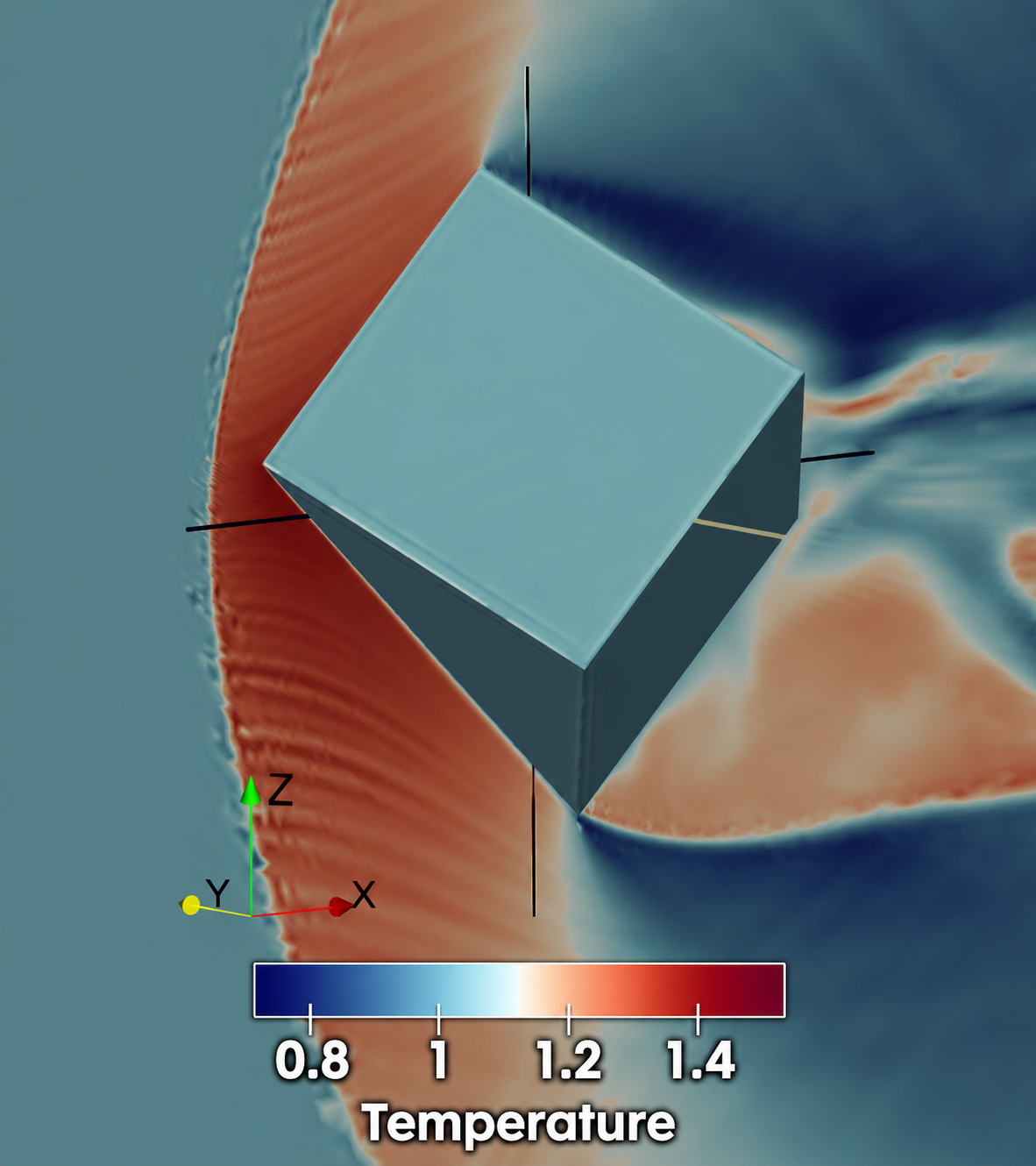}
    \caption{Temperature, $T$}
  \end{subfigure}\hfill
  \begin{subfigure}[t]{0.27\linewidth}
    \centering
    \vspace{0pt}
    \includegraphics[width=\linewidth]
      {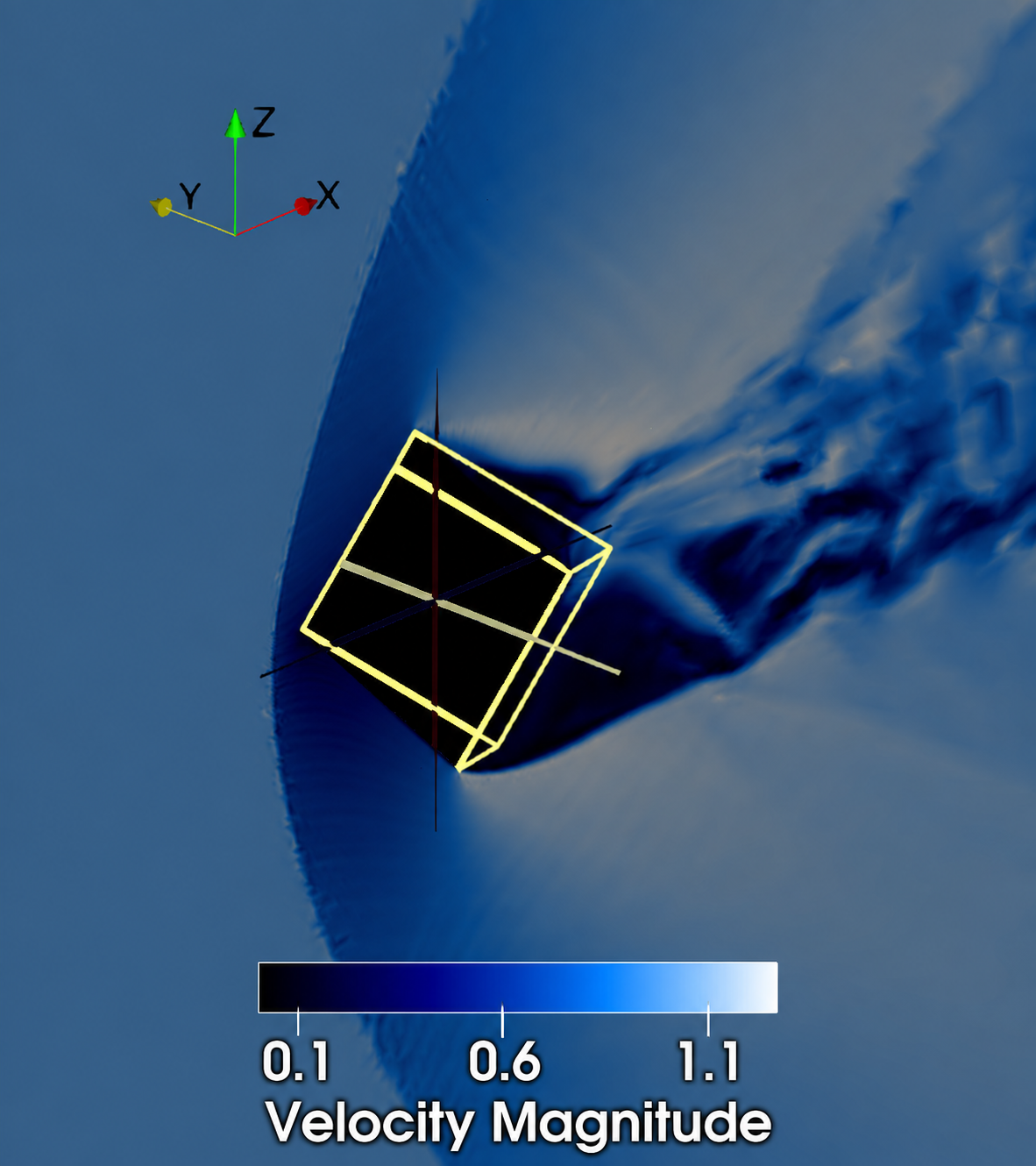}
    \caption{Velocity magnitude, $|\bu|$, on the symmetry plane}
  \end{subfigure}

  \par\medskip

  \newlength{\cubesecondrowheight}
  \newlength{\cubedownstreamheight}
  \settoheight{\cubesecondrowheight}{%
    \includegraphics[width=0.60\linewidth]
      {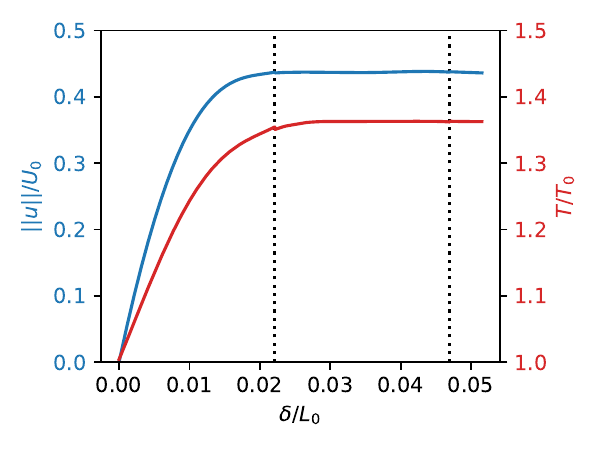}%
  }
  \settoheight{\cubedownstreamheight}{%
    \includegraphics[width=0.30\linewidth]
      {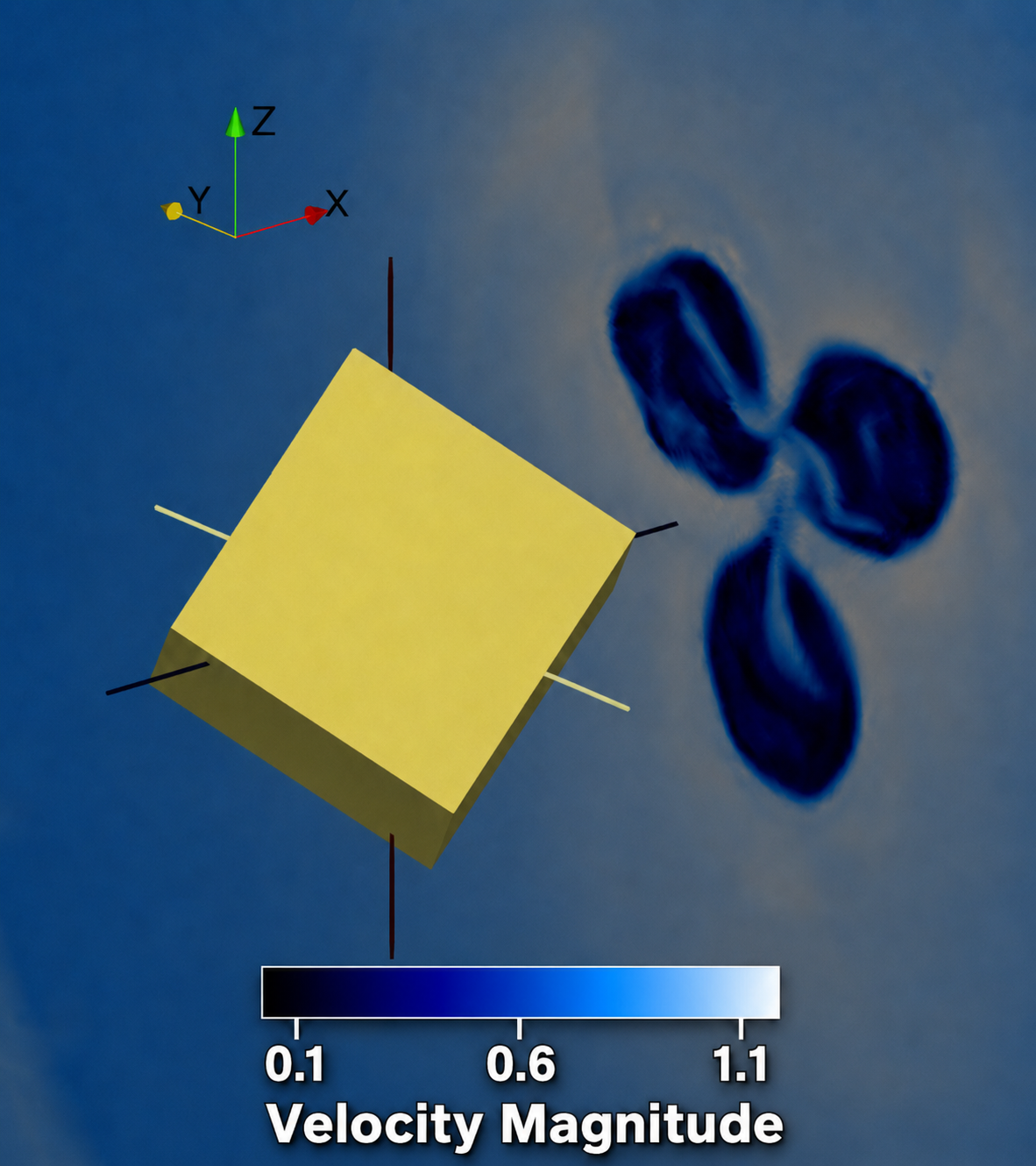}%
  }
  \ifdim\cubedownstreamheight>\cubesecondrowheight
    \setlength{\cubesecondrowheight}{\cubedownstreamheight}
  \fi

  \begin{subfigure}[t]{0.30\linewidth}
    \centering
    \vspace{0pt}
    \begin{minipage}[c][\cubesecondrowheight][c]{\linewidth}
      \centering
      \includegraphics[width=\linewidth]
        {figures/multidimensional/inclined_cube/cube_velocity_magnitude_down_plane_legend.png}
    \end{minipage}
    \caption{Velocity magnitude, $|\bu|$, on a downstream plane}
  \end{subfigure}\hfill
  \begin{subfigure}[t]{0.60\linewidth}
    \centering
    \vspace{0pt}
    \begin{minipage}[c][\cubesecondrowheight][c]{\linewidth}
      \centering
      \includegraphics[width=\linewidth]
        {figures/multidimensional/inclined_cube/cube-bndlayer.pdf}
    \end{minipage}
    \caption{Wall-normal velocity-magnitude and temperature profiles}
    \label{fig:inclined-cube-wall-profiles}
  \end{subfigure}

  \caption{Computed flow over the inclined three-dimensional cube at
  nondimensional time $t=50$ and the corresponding wall-normal profiles.
  Panels (a) and (b) show density and temperature. Panels (c) and (d)
  show velocity magnitude on the symmetry and downstream planes,
  respectively. Panel (e) shows velocity-magnitude and temperature
  profiles along the wall-normal probe from the center of the sampled
  windward face. All color bars and normalized profile coordinates use
  the reference scales in Eq.~\eqref{eq:inclined-cube-reference-scales}.
  In the plotted labels of panel (e), the numerators denote dimensional
  quantities: $\delta/L_0$, $|\bu|/U_0$, and $T/T_0$ represent the
  nondimensional distance, velocity magnitude, and temperature used for
  the description of the test case. The origin denotes the wall, and
  the vertical dotted lines mark mesh-element interfaces.}
  \label{fig:inclined-cube-flow}
\end{figure}

\begin{samepage}
Figure~\ref{fig:inclined-cube-flow}(\subref{fig:inclined-cube-wall-profiles})
shows velocity-magnitude and temperature profiles along a wall-normal
probe from the center of the sampled
windward face. In the nondimensional coordinates of
\ref{app:multidimensional-reference-scales}, let $\delta\geq0$ denote
wall-normal distance into the fluid. The probe starts at
\begin{equation*}
  \bm x_w
  =
  \left(
    -\frac{1}{2\sqrt2},
    0,
    -\frac{1}{2\sqrt2}
  \right)^{\transpose}
\end{equation*}
and follows
\begin{equation*}
  \bm x(\delta)
  =\bm x_w-\frac{\delta}{\sqrt2}(1,0,1)^{\transpose},
  \qquad
  0\leq\delta\leq0.051543.
\end{equation*}
\end{samepage}

The probe points from the cube into the fluid and its direction is opposite
to the outward unit normal of the fluid domain at the cube wall. The vertical
lines in panel~(\subref{fig:inclined-cube-wall-profiles}) mark mesh-element
interfaces. Most of the
velocity and temperature variation occurs before the first marked interface,
within the wall-adjacent region with $\psol=4$. The profiles show the local
near-wall variation and the pointwise discrepancies from the prescribed wall data reported below.

At the center of the sampled windward face, the wall-normal probe in
Figure~\ref{fig:inclined-cube-flow}(\subref{fig:inclined-cube-wall-profiles})
gives the nondimensional trace (i.e., boundary) discrepancies
\begin{equation*}
  |\bu_h|=4.46\times10^{-4},
  \qquad
  |T_h-T_w|=4.30\times10^{-3}.
\end{equation*}
These are absolute pointwise discrepancies, not relative errors with respect
to zero or wall-wide maxima, and do not establish convergence of wall stress
or heat flux.

Together, the three multidimensional cases provide complementary evidence
without duplicating the controlled one-dimensional tests: closure of an
assembled adapted-entropy balance, high-order convergence of the numerical
wall traces to the prescribed no-slip and isothermal data on smooth curved
walls, and successful ``operation'' in one supersonic, shock-dominated
separated-flow configuration.

\section{Conclusions}
\label{sec:conclusions}

We have established a homogeneous adapted-entropy wall estimate for a
stationary, impermeable, no-slip wall at constant $T_w>0$. The affine
representative $S_{T_w}=S+\rho E/T_w$ cancels the canonical exchange
$q_n/T_w$ through the total-energy balance while preserving the entropy
Hessian and relative entropy. Scaling by $T_w$ gives the total
ballistic-energy balance. Under the assumptions in
Subsection~\ref{subsec:entropy-l2-bound}, this estimate yields a conditional
$L_2$-type a priori bound based on the prescribed initial field
$\bQ_0(\bm x)$~\cite{Dafermos2016,Smoller1994}.

For DG discretizations with the SBP property, including DGSEM, we have
developed CR--SAT and FEE--SAT, which reproduce the continuous
adapted-entropy wall balance at the semi-discrete level under the stated
operator-compatibility assumptions. Both yield zero adapted-entropy wall
contribution when the wall penalty is disabled and a nonpositive quadratic
contribution when it is enabled, without imposing an artificial relation
between $T^-$ and $T_w$. For source-free problems with admissible states,
the discrete adapted entropy is nonincreasing when the remaining
boundaries are periodic or have zero adapted-entropy contribution.
CR--SAT penalizes the complete wall-data residual and balances fluid
energy through its complete numerical wall flux. FEE--SAT combines an
entropy-neutral skew correction with a momentum-projected penalty whose
mass and total-energy components vanish, making the numerical outward
total-energy flux equal to the one-sided numerical Fourier heat flux at
every wall node and spatial resolution.

Matched pointwise tests verify the local identities. Thermal-relaxation
calculations show roundoff-level closure of the semi-discrete total-energy
and adapted-entropy balances for all four wall configurations, using
independently evaluated contributions. In these tests, only FEE--SAT also
closes the Fourier-only energy balance at roundoff. The manufactured-solution
study shows comparable observed spatial convergence orders for both
formulations at fixed interior discretization. The symmetric-LDG
configuration without optional numerical entropy dissipation exhibits the
same parity-sensitive reduction for both formulations. With entropy-stable 
interior coupling, both wall-penalty settings yield
convergence rates consistent with the nominal order $\psol+1$ in the
discrete $L_2$ norm, where $\psol$ denotes the solution polynomial degree.

The three-dimensional tests demonstrate roundoff-level adapted-entropy
balance closure along a nontrivial closed-domain transient, curved-wall
trace convergence consistent with order $\psol+1$ in the discrete wall
$L_2$ norm under transverse annular-pipe refinement, and operation in one
supersonic, shock-dominated separated-flow configuration. The principal
distinction between the formulations is their finite-resolution energy
structure rather than their observed convergence order.

\appendix

\section{Canonical entropy identities}
\label{app:section2-details}

This appendix collects only the identities needed to verify the signs and
normalizations used in the main text. Standard thermodynamic derivations are
not repeated; see Refs.~\cite{Callen1985,DeGrootMazur1984,LandauLifshitz1987,
HughesFrancaMallet1986,Dafermos2016}.

\subsection{Thermodynamic reference state and admissibility}
\label{app:thermodynamics}
For a calorically perfect gas,
\begin{equation}
  T\,\mathrm ds
  =\mathrm de+p\,\mathrm d(1/\rho),
  \qquad
  s=s_{\mathrm{ref}}+c_v\log(T/T_{\mathrm{ref}})
     -R\log(\rho/\rho_{\mathrm{ref}}).
  \label{app:eq:gibbs-perfect-gas}
\end{equation}
Changing the positive reference state together with the corresponding
$s_{\mathrm{ref}}$ leaves $s(\rho,T)$ unchanged. Shifting the entropy
zero by $C_s\in\R$ replaces $s$ by $s+C_s$ and $S=-\rho s$ by
$S-C_s\rho$. Thus, $\mathrm ds$, the entropy Hessian,
entropy-variable gradients, relative entropy, and entropy production are
unchanged. The admissible set is characterized by $\rho>0$ and
$\rho e=\rho E-|\bm m|^2/(2\rho)>0$. The latter inequality is
equivalent to $T>0$ for this gas.

\subsection{Constitutive and entropy-production identities}
\label{app:constitutive-details}

The Newtonian stress may equivalently be written as
\begin{equation*}
  \btau
  =
  \mu
  \left(
    \nabla\bu+(\nabla\bu)^{\transpose}
  \right)
  +\lambda(\nabla\cdot\bu)\bm I,
  \qquad
  \lambda=\zeta-\frac{2\mu}{d}.
\end{equation*}
Here, $\lambda$ is the usual second coefficient of viscosity. In three
spatial dimensions, Stokes' hypothesis $\lambda=-2\mu/3$ is equivalent to
$\zeta=0$. With the outward heat flux
$q_n=-\kappa\partial_nT$, the local entropy production is
\begin{equation}
  \Dcal
  =
  \frac{2\mu}{T}\bm D^0(\bu):\bm D^0(\bu)
  +\frac{\zeta}{T}(\nabla\!\cdot\bu)^2
  +\frac{\kappa}{T^2}|\nabla T|^2
  \geq0.
  \label{app:eq:entropy-production-derivation}
\end{equation}
This follows directly from the Newtonian stress, Fourier's law, and the Gibbs
relation. This compact form is standard. Explicit component manipulations may
be found in Refs.~\cite{HughesFrancaMallet1986,Fisher2012PhD,FisherCarpenter2013,CarpenterEtAl2014}.

\subsection{Canonical entropy balance}
\label{app:canonical-balance-derivation}

On the admissible set, $S=-\rho s$ is strictly convex and
$\bW=\nabla_{\bQ}S$ is one-to-one
\cite{Harten1983,HughesFrancaMallet1986,Dafermos2016}. Contracting the
compressible Navier--Stokes equations with $\bW$ gives
\begin{equation}
  \partial_tS+\partial_i(Su_i)
  =
  \partial_i\!\left(\bW^{\transpose}\bFV{i}\right)
  -(\partial_i\bW)^{\transpose}\bm C_{ij}\partial_j\bW.
  \label{app:eq:canonical-local-balance}
\end{equation}
The quadratic form is nonnegative because
$[\bm C_{ij}]_{i,j=1}^{d}\succeq\bm0$. Hence, the
last term in Eq.~\eqref{app:eq:canonical-local-balance} is nonpositive.
This identity is the only canonical-balance result used later.

\section{Stationary isothermal-wall balance}
\label{app:section3-details}

Let $\bn$ be outward and let $q_n>0$ denote heat leaving the fluid.
No penetration, $u_n=0$, eliminates the inviscid entropy flux and the
advective part of the total-energy flux:
\begin{equation}
  F_n=Su_n=0,
  \qquad
  (\rho E+p)u_n=0.
  \label{app:eq:wall-inviscid-entropy-flux}
\end{equation}
The full no-slip condition, $\bu=\bm0$, additionally eliminates the
viscous mechanical-work term $\bu\cdot\btau\bn$. Independently, direct
contraction of the normal viscous flux in
Eq.~\eqref{eq:normal-viscous-flux} gives
\begin{equation}
  \bW^{\transpose}\bm F_n^{(V)}
  =\frac{q_n}{T}.
  \label{app:eq:wall-viscous-entropy-contraction}
\end{equation}
Consequently, imposing $T=T_w$ gives the canonical open-system contribution
\begin{equation}
  \left.\Bcal_S\right|_{\wall}
  =
  \frac{q_n}{T_w}.
  \label{app:eq:isothermal-canonical-wall-contribution}
\end{equation}
The net outward total-energy flux at the same wall is
\begin{equation}
  F_{E,n}=q_n.
  \label{app:eq:isothermal-wall-energy-flux}
\end{equation}
Therefore, $\Bcal_S-F_{E,n}/T_w=0$ pointwise. This cancellation is the
continuous mechanism reproduced by the discrete wall treatments presented
herein. A detailed derivation of the canonical wall-entropy contribution
in the SBP--SAT wall-boundary setting is provided in
Refs.~\cite{ParsaniCarpenterNielsen2014NASA,ParsaniCarpenterNielsen2015}.

\section{Affine entropy and ballistic-energy identities}
\label{app:adapted-entropy-details}

This appendix supplies the supporting derivations for
Subsections~\ref{subsec:affine-entropy}, \ref{subsec:entropy-l2-bound},
and~\ref{subsec:ballistic-free-energy}. The wall-adapted pair, inherited
structure, and local balance are defined in
Eqs.~\eqref{eq:adapted-entropy}--\eqref{eq:local-adapted-entropy}.

\subsection{Proof of affine invariance}
\label{app:affine-invariance-proof}

\begin{proof}[Proof of Lemma~\ref{lem:affine-entropy}]
Let
\begin{equation*}
  \bm A_i(\bQ)
  =
  \frac{\p\bFI{i}}{\p\bQ}.
\end{equation*}
Because $\bm a$ is constant, the entropy-pair compatibility relation gives
\begin{align*}
  \bm A_i^{\transpose}\bm W^{\bm a}
  &=
  \bm A_i^{\transpose}\bW
  +
  \bm A_i^{\transpose}\bm a
  \\
  &=
  \nabla_{\bQ}\FI{i}
  +
  \nabla_{\bQ}
  \left(
    \bm a^{\transpose}\bFI{i}
  \right)
  =
  \nabla_{\bQ}F_i^{\bm a}.
\end{align*}
Hence, $(S^{\bm a},F_i^{\bm a})$ is an entropy pair for the inviscid
equations. Direct substitution gives
\begin{equation*}
  \Phi^{\bm a}
  =
  (\bW+\bm a)^{\transpose}\bQ
  -
  \left(
    S+\bm a^{\transpose}\bQ
  \right)
  =
  \Phi,
\end{equation*}
and, similarly, $\Psi_i^{\bm a}=\Psi_i$. Finally, because
$\bm a$ is constant, we have $\p_i\bm W^{\bm a}=\p_i\bW$. Therefore, the viscous
coefficient matrices and the volume entropy-production density are unchanged.
\end{proof}

\subsection{Derivation of the conditional
  \texorpdfstring{$L_2$}{L2}-type bound}
\label{app:entropy-l2-derivation}

The conditional $L_2$-type bound follows from the classical convex-entropy
stability framework developed by Dafermos in
1979~\cite{Dafermos1979}; see also
Dafermos~\cite[Chapter~V]{Dafermos2016} and
Smoller~\cite{Smoller1994}. For completeness, we detail its application
to the wall-adapted entropy under the assumptions of
Subsection~\ref{subsec:entropy-l2-bound}.

Fix a time $t$ for which these assumptions hold and
$\underline{\lambda}_{T_w}(t)>0$.
Equation~\eqref{eq:global-adapted-dissipation-identity} implies
\begin{equation*}
  \int_{\Omega}
  \ST\bigl(\bQ(\bm x,t)\bigr)\,\dd\Omega
  \leq
  \int_{\Omega}
  \ST\bigl(\bQ_0(\bm x)\bigr)\,\dd\Omega.
\end{equation*}
The admissible set $\mathcal A$ is convex because
$(\rho,\bm m)\mapsto\lvert\bm m\rvert^2/(2\rho)$ is convex for
$\rho>0$. Hence, whenever $\bQ_0(\bm x)$ and $\bQ(\bm x,t)$ are
admissible, the entire state segment
\begin{equation*}
  (1-\theta)\bQ_0(\bm x)+\theta\bQ(\bm x,t),
  \qquad \theta\in[0,1],
\end{equation*}
remains in $\mathcal A$.
By the essential-infimum definition in
Eq.~\eqref{eq:adapted-entropy-minimum-hessian-eigenvalue}, the smallest
eigenvalue of the entropy Hessian along this segment is bounded below
by $\underline{\lambda}_{T_w}(t)$ for almost every $\bm x\in\Omega$
and all $\theta\in[0,1]$.
Here, ``almost every'' allows an exceptional spatial set of zero
Lebesgue measure, which does not affect the volume integrals.
Taylor's theorem along the segment, followed by integration over
$\Omega$, use of the entropy inequality above, and the
Cauchy--Schwarz inequality, yields
\begin{align*}
  \frac{\underline{\lambda}_{T_w}(t)}{2}
  \left\|
    \bQ(\cdot,t)-\bQ_0
  \right\|_{L_2(\Omega)}^2
  &\leq
  -
  \int_{\Omega}
  \bWT\bigl(\bQ_0(\bm x)\bigr)^{\transpose}
  \left[
    \bQ(\bm x,t)-\bQ_0(\bm x)
  \right]
  \dd\Omega
  \\
  &\leq
  \left\|
    \bWT(\bQ_0)
  \right\|_{L_2(\Omega)}
  \left\|
    \bQ(\cdot,t)-\bQ_0
  \right\|_{L_2(\Omega)}.
\end{align*}
If $\bQ(\cdot,t)=\bQ_0$ in $L_2(\Omega)$, the bound is immediate.
Otherwise, division by
$\lVert\bQ(\cdot,t)-\bQ_0\rVert_{L_2(\Omega)}$ gives
\begin{equation*}
  \left\|
    \bQ(\cdot,t)-\bQ_0
  \right\|_{L_2(\Omega)}
  \leq
  \frac{2}{\underline{\lambda}_{T_w}(t)}
  \left\|
    \bWT(\bQ_0)
  \right\|_{L_2(\Omega)}.
\end{equation*}
The triangle inequality then yields
Eq.~\eqref{eq:continuous-entropy-l2-bound}. A positive lower bound for
$\underline{\lambda}_{T_w}(t)$ that is uniform over the time interval
under consideration gives the uniform-in-time estimate.
Note that strict convexity alone does not guarantee such a uniform lower bound.

\subsection{Ballistic-energy identities}
\label{app:ballistic-energy-identities}

For constant $\Tw>0$, Eq.~\eqref{eq:total-ballistic-energy-density}
and the adapted-entropy Hessian give
\begin{equation}
  \nabla_{\bQ}^{2}
  \mathcal E_{\Tw}^{\mathrm{bal}}
  =
  \Tw\nabla_{\bQ}^{2}\ST
  =
  \Tw\bm H_S
  \succ\bm0
  \qquad
  \text{on }\mathcal A.
  \label{eq:ballistic-energy-hessian}
\end{equation}
Multiplying Eq.~\eqref{eq:global-adapted-wall-identity} by $\Tw$ yields
\begin{equation}
  \frac{\dd}{\dd t}
  \int_{\Omega}
  \mathcal E_{\Tw}^{\mathrm{bal}}(\bQ)
  \,\dd\Omega +
  \Tw
  \int_{\Omega}
  \Dcal\,\dd\Omega = \Tw
  \int_{\partial\Omega\setminus\wall}
  \Bcal_{\ST}\,\dd\Gamma.
  \label{eq:global-ballistic-energy-balance}
\end{equation}
If the remaining boundary portions have zero adapted-entropy
contribution, this reduces to
\begin{equation}
  \frac{\dd}{\dd t}
  \int_{\Omega}
  \mathcal E_{\Tw}^{\mathrm{bal}}(\bQ)
  \,\dd\Omega
  + \Tw \int_{\Omega} \Dcal\,\dd\Omega = 0.
  \label{eq:global-ballistic-dissipation-identity}
\end{equation}
At a stationary no-slip wall,
\begin{align}
  \left.
  \Tw\Bcal_{\ST}
  \right|_{\wall}
  &=
  \left.
  \left(
    \Tw\Bcal_S-F_{E,n}
  \right)
  \right|_{\wall}
  \notag\\
  &=
  \qnormal
  \left(
    \frac{\Tw}{T}-1
  \right).
  \label{eq:ballistic-wall-contribution}
\end{align}
The right-hand side vanishes for $T=\Tw$, consistently with
Theorem~\ref{thm:isothermal-wall-adapted}.

\subsection{Relative-entropy invariance under the affine modification}
\label{app:relative-entropy-invariance}

For a differentiable convex function $H$ and states
$\bQ,\overline{\bQ}\in\mathcal A$, define the Bregman divergence
\cite{Bregman1967}
\begin{equation}
  H(\bQ\mid\overline{\bQ})
  =
  H(\bQ)-H(\overline{\bQ})
  -\nabla_{\bQ}H(\overline{\bQ})^{\transpose}
   (\bQ-\overline{\bQ}).
  \label{app:eq:relative-entropy-definition}
\end{equation}
When $H$ is a mathematical entropy, this quantity is commonly called the
relative entropy. Substituting
$S_{T_w}=S+\be_E^{\transpose}\bQ/T_w$ and
$\bW_{T_w}=\bW+\be_E/T_w$ into
Eq.~\eqref{app:eq:relative-entropy-definition} gives
\begin{equation}
  S_{T_w}(\bQ\mid\overline{\bQ})
  =
  S(\bQ\mid\overline{\bQ}).
  \label{app:eq:relative-entropy-invariance}
\end{equation}
For every pair of admissible states, the affine contribution cancels
exactly between the entropy difference and the tangent-plane term. This
identity is not needed for the conditional $L_2$-type bound in
Subsection~\ref{subsec:entropy-l2-bound}.

\section{Discrete operators, viscous matrices, and implementation details}
\label{app:section5-support}

\subsection{Explicit Cartesian affine tensor-product SBP construction}
\label{app:sbp-operator-construction}

Let $\widehat K=[-1,1]^d$. For the LGL nodes
$\{\xi_a\}_{a=0}^{\psol}$, weights
$\{\omega_a\}_{a=0}^{\psol}$, and Lagrange basis functions $\ell_b$, 
define~\cite{CarpenterEtAl2014,Gassner2013,CarpenterParsaniFisherNielsen2015}
\begin{equation}
  \mathsf P_{\Nsol}
  =\operatorname{diag}(\omega_0,\ldots,\omega_{\psol}),
  \qquad
  (\mathsf D_{\Nsol})_{ab}=\ell_b'(\xi_a),
  \qquad
  \mathsf Q_{\Nsol}
  =\mathsf P_{\Nsol}\mathsf D_{\Nsol}.
  \label{app:eq:one-dimensional-lgl-operators}
\end{equation}
These matrices satisfy Eq.~\eqref{eq:one-dimensional-sbp-property}.

Consider
\begin{equation*}
  K=\prod_{i=1}^{d}[x_{i,L},x_{i,H}],
  \qquad
  h_{i,K}=x_{i,H}-x_{i,L},
  \qquad
  x_{i,c}=\frac{x_{i,L}+x_{i,H}}{2}.
\end{equation*}
Here, $x_{i,L}$ and $x_{i,H}$ are the lower and upper endpoints of
$K$ in Cartesian direction $i$.
Let $\delta_{ij}$ denote the Kronecker delta. For
$\boldsymbol{\xi}=(\xi_1,\ldots,\xi_d)^{\transpose}\in\widehat K$
and
$\boldsymbol{x}=(x_1,\ldots,x_d)^{\transpose}\in K$, the coordinate-wise
affine map and its constant metric factors are
\begin{align}
  x_i(\boldsymbol{\xi})
  &=x_{i,c}+\frac{h_{i,K}}{2}\xi_i,
  \label{app:eq:cartesian-affine-map}
  \\
  \frac{\partial x_i}{\partial\xi_j}
  &=\frac{h_{i,K}}{2}\delta_{ij},
  &
  \frac{\partial\xi_i}{\partial x_j}
  &=\frac{2}{h_{i,K}}\delta_{ij}.
  \label{app:eq:cartesian-affine-metrics}
\end{align}
The volume Jacobian determinant is
\begin{equation}
  J_K
  =\det\!\left(
    \frac{\partial\boldsymbol{x}}
         {\partial\boldsymbol{\xi}}
  \right)
  =\prod_{i=1}^{d}\frac{h_{i,K}}{2}>0.
  \label{app:eq:cartesian-volume-jacobian}
\end{equation}
Thus, $J_K$ converts reference-volume quadrature to physical-volume
quadrature.

Let $\mathsf I_{\Nsol}$ be the $\Nsol\times\Nsol$ identity matrix. The
physical scalar tensor-product operators are
\begin{align}
  \mathsf P_K
  &=J_K\mathsf P_{\Nsol}^{\otimes d},
  \\
  \mathsf D_{i,K}
  &=\frac{2}{h_{i,K}}
    \mathsf I_{\Nsol}^{\otimes(i-1)}
    \otimes\mathsf D_{\Nsol}
    \otimes\mathsf I_{\Nsol}^{\otimes(d-i)},
  \\
  \mathsf Q_{i,K}
  &=\mathsf P_K\mathsf D_{i,K}.
  \label{app:eq:tensor-product-sbp-operators}
\end{align}
Here,
$\mathsf P_K,\mathsf D_{i,K},\mathsf Q_{i,K}
\in\R^{N_K\times N_K}$ and $N_K=\Nsol^d$.

For flattened volume-node indices $a,b\in\{1,\ldots,N_K\}$ with
multi-indices $\boldsymbol{\alpha},\boldsymbol{\beta}
\in\{0,\ldots,\psol\}^d$, the derivative entries are
\begin{equation*}
  (\mathsf D_{i,K})_{ab}
  =
  \frac{2}{h_{i,K}}
  (\mathsf D_{\Nsol})_{\alpha_i\beta_i}
  \prod_{\substack{j=1\\j\neq i}}^d
  \delta_{\alpha_j\beta_j}.
\end{equation*}
Only nodes with identical coordinates in all directions $j\neq i$
contribute to the flux-differencing sum in
Eq.~\eqref{eq:entropy-conservative-volume-flux-differencing}. Thus, the
sum is evaluated along the coordinate line through node $a$ in direction
$i$, often called an $i$-direction pencil.

For $r\in\{1,\ldots,d\}$ and $\sigma\in\{-1,1\}$, let
$f=f_{r,\sigma}$ be the image of the reference face $\xi_r=\sigma$.
All face operators below refer to the current element $K$, whose index
is suppressed where unambiguous. Define
\begin{equation*}
  \bm s_{-1}=(1,0,\ldots,0)^{\transpose},
  \qquad
  \bm s_{1}=(0,\ldots,0,1)^{\transpose},
  \qquad
  \bm s_{\sigma}\in\R^{\Nsol}.
\end{equation*}
With the tensor-product ordering used above, the restriction matrix is
\begin{equation}
  \mathsf R_f
  =\mathsf I_{\Nsol}^{\otimes(r-1)}
   \otimes\bm s_{\sigma}^{\transpose}
   \otimes\mathsf I_{\Nsol}^{\otimes(d-r)}
  \in\R^{N_f\times N_K},
  \qquad
  N_f=\Nsol^{d-1}.
  \label{app:eq:cartesian-face-restriction}
\end{equation}
The surface Jacobian and outward unit normal are
\begin{equation}
  J_{f,K}
  =\prod_{\substack{j=1\\j\neq r}}^{d}
    \frac{h_{j,K}}{2}
  =\frac{2J_K}{h_{r,K}},
  \qquad
  \bn_f=\sigma\be_r.
  \label{app:eq:cartesian-face-geometry}
\end{equation}
Here, $\be_r$ is the $r$th Cartesian basis vector, and $\bn_f$ is outward
from the current element $K$. At an interior face, $\bn_f=\bn$ for
$K=K^-$ and $\bn_f=-\bn$ for $K=K^+$, with the common normal of
Subsection~\ref{subsec:interface-2015-coupling}.
The factor $J_{f,K}$ converts reference-face quadrature to physical-face
quadrature. Let $\bm1_f\in\R^{N_f}$ be the face vector of ones
and define
\begin{equation*}
  \bm n_{i,f}
  =\sigma\delta_{ir}\bm1_f.
\end{equation*}
The physical face quadrature and normal-component matrices are
\begin{equation}
  \mathsf P_f
  =J_{f,K}\mathsf P_{\Nsol}^{\otimes(d-1)},
  \qquad
  \mathsf N_{i,f}
  =\operatorname{diag}(\bm n_{i,f}),
  \label{app:eq:face-quadrature-and-normal}
\end{equation}
with
$\mathsf P_f,\mathsf N_{i,f}
\in\R^{N_f\times N_f}$. These definitions give
Eq.~\eqref{eq:multidimensional-sbp-property}.

The physical one-dimensional norm in direction $r$ is
$(h_{r,K}/2)\mathsf P_{\Nsol}$. Hence, its endpoint entry on either
face $f_{r,\sigma}$ is
\begin{equation}
  p_{n,f}
  =\frac{h_{r,K}}{2}\,\omega_{\mathrm{end}}>0,
  \qquad
  \omega_{\mathrm{end}}
  =\omega_0=\omega_{\psol}.
  \label{eq:normal-endpoint-mass-entry}
\end{equation}
The same formula defines $p_{n,w}$ on a physical wall face, using the local
wall-normal element length. The factor $1/2$ arises because the
reference interval $[-1,1]$ has length two.

This appendix gives the Cartesian affine representation used in the proof.
Subsection~\ref{subsec:tensor-product-sbp} points to the additional metric and
interface conditions required by curvilinear and nonconforming extensions.

\subsection{Viscous coefficient matrices}
\label{app:viscous-coefficient-matrices}

Let $m=d+2$ and let
$\bm z=(z_\rho,\bm z_u,z_E)^{\transpose}\in\R^m$ be an
entropy-variable direction. The corresponding primitive-variable variations
are
\begin{equation}
  \delta T=T^2z_E,
  \qquad
  \delta\bu=T\bm z_u+Tz_E\bu.
  \label{app:eq:entropy-to-primitive-direction}
\end{equation}
For $r,s\in\{1,\ldots,d\}$ and $\lambda=\zeta-2\mu/d$, define
\begin{equation}
  (\bm A_{ij})_{rs}
  =
  T\left[
    \mu(\delta_{ij}\delta_{rs}+\delta_{rj}\delta_{si})
    +\lambda\delta_{ri}\delta_{sj}
  \right].
  \label{app:eq:Aij-definition}
\end{equation}
The single-state viscous coefficient matrix is
\begin{equation}
  \bm C_{ij}(\bQ)
  =
  \begin{pmatrix}
    0 & \bm0^{\transpose} & 0\\
    \bm0 & \bm A_{ij} & \bm A_{ij}\bu\\
    0 & \bu^{\transpose}\bm A_{ij}
      & \bu^{\transpose}\bm A_{ij}\bu+\kappa T^2\delta_{ij}
  \end{pmatrix}.
  \label{app:eq:Cij-block-form}
\end{equation}
This dimension-independent representation satisfies
$\bm C_{ij}=\bm C_{ji}^{\transpose}$ and
$[\bm C_{ij}]_{i,j=1}^{d}\succeq\bm0$ under
Eq.~\eqref{eq:transport-assumptions}. It is equivalent to the explicit
three-dimensional matrices in
Refs.~\cite{Fisher2012PhD,ParsaniCarpenterNielsen2015}. The nodal matrices in
Eq.~\eqref{eq:block-diagonal-viscous-coefficients} are obtained by evaluating
Eq.~\eqref{app:eq:Cij-block-form} at every node of $K$ and assembling the
results block diagonally.

\subsection{Admissible wall numerical state}
\label{app:viscous-boundary-state-admissibility}

Using the wall notation of
Subsection~\ref{subsec:isothermal-wall-common-data}, let $\bQ^-$ be
the admissible interior trace and let
$\widehat{\bW}_w=(W_1^-,\bm0,-1/\Tw)^{\transpose}$ with $\Tw>0$.
The last $d+1$ components imply
$\widehat\bu_w=\bm0$ and $\widehat T_w=\Tw$. For a calorically perfect gas,
$h/T=c_p$. If $\widehat s_w=s(\widehat\rho_w,\Tw)$, equality of the first
entropy-variable components gives
\begin{equation*}
  \widehat s_w
  =s^-+\frac{\lvert\bu^-\rvert^2}{2T^-}.
\end{equation*}
Using Eq.~\eqref{eq:thermodynamic-entropy} for the interior and wall states
then yields
\begin{equation}
  \widehat\rho_w
  =
  \rho^-
  \left(
    \frac{\Tw}{T^-}
  \right)^{c_v/R}
  \exp\!\left(
    -\frac{\lvert\bu^-\rvert^2}{2RT^-}
  \right)
  >0.
  \label{app:eq:admissible-wall-density}
\end{equation}
Consequently,
\begin{equation*}
  \widehat{\bQ}_w
  =
  \begin{pmatrix}
    \widehat\rho_w\\
    \bm0\\
    \widehat\rho_w c_v\Tw
  \end{pmatrix}
  \in\mathcal A
\end{equation*}
for every admissible interior state and every $\Tw>0$. Thus, the inverse
entropy map is applied only to the admissible wall value
$\widehat{\bW}_w$. No reflected exterior entropy-variable state is used to
evaluate the viscous coefficient matrices.

\subsection{Fourier-energy-exact skew correction and projected penalty}
\label{app:fee-wall-identities}

We use the nodewise wall notation of
Subsection~\ref{subsec:isothermal-wall-common-data}.
The skew matrix in Eq.~\eqref{eq:fee-skew-matrix} satisfies
$\bm K_w^{\transpose}=-\bm K_w$. Transposing the scalar quadratic
form changes its sign, so
\begin{equation}
  \bm r_w^{\transpose}\bm K_w\bm r_w=0.
  \label{app:eq:fee-skew-quadratic}
\end{equation}
Its energy row acting on $\bm r_w$ is
$-\bu^-\cdot\bm t^-$, which cancels the mechanical-work component of the
one-sided viscous flux. The projected matrix satisfies
\begin{equation}
  \bm\Lambda_{w,u}^{\mathrm{FEE}}
  =
  \frac{\beta_w}{p_{n,w}}
  \bm\Pi_u
  \left(
    \bm C_{nn,w}^{-}+\widehat{\bm C}_{nn,w}
  \right)
  \bm\Pi_u
  =
  (\bm\Lambda_{w,u}^{\mathrm{FEE}})^{\transpose}
  \succeq\bm0.
  \label{app:eq:fee-projected-penalty}
\end{equation}
The property
$\bm C_{nn,w}^{-}+\widehat{\bm C}_{nn,w}\succeq\bm0$ and
$\bm\Pi_u^{\transpose}=\bm\Pi_u$ imply that the projected matrix is
symmetric. For any $\bm z\in\R^m$, its quadratic form is
$\beta_w/p_{n,w}$ times the quadratic form of
$\bm C_{nn,w}^{-}+\widehat{\bm C}_{nn,w}$ at $\bm\Pi_u\bm z$.
Both factors are nonnegative, so
$\bm\Lambda_{w,u}^{\mathrm{FEE}}\succeq\bm0$.
The projected matrix has zero mass and energy rows and columns.
Moreover, $\bW_{T_w}^-=\widehat{\bW}_{T_w,w}+\bm r_w$, and
$\widehat{\bW}_{T_w,w}$ has only its first component potentially nonzero.
Its contraction with the projected penalty therefore vanishes, giving
\begin{equation}
  \be_E^{\transpose}\bm M_{w,u}^{\mathrm{FEE}}=0,
  \qquad
  (\bW_{T_w}^-)^{\transpose}\bm M_{w,u}^{\mathrm{FEE}}
  =
  -\bm r_w^{\transpose}
  \bm\Lambda_{w,u}^{\mathrm{FEE}}\bm r_w
  \leq0.
  \label{app:eq:fee-penalty-identities}
\end{equation}
For any $\bm A\in\R^{m\times m}$ with $\bm A\succeq\bm0$, let
$\bm A^{1/2}\succeq\bm0$ denote its symmetric square root. Then
$\bm r_w^{\transpose}\bm A\bm r_w
=\lVert\bm A^{1/2}\bm r_w\rVert_2^2$, which vanishes exactly when
$\bm A\bm r_w=\bm0$.
Applying this observation to $\bm\Lambda_w^{\mathrm{CR}}$ and
$\bm\Lambda_{w,u}^{\mathrm{FEE}}$ gives the equality conditions stated
in Subsections~\ref{subsec:cr-sat} and~\ref{subsec:fee-sat}.
For a pure temperature mismatch, $\bm\Pi_u\bm r_w=\bm0$, so the
FEE--SAT projected penalty vanishes.
Combining Eqs.~\eqref{app:eq:fee-skew-quadratic} and
\eqref{app:eq:fee-penalty-identities} yields the local entropy identity
\eqref{eq:fee-wall-entropy-contribution}. The skew correction and zero
energy row of the projected penalty yield the local energy identity
\eqref{eq:fee-fourier-energy-identity} and, after summation over the wall,
the global balance \eqref{eq:fee-global-energy-balance}.

\subsection{Discrete entropy contraction}
\label{app:semidiscrete-entropy-identity}

For the entropy-scaled characteristic dissipation in
Subsection~\ref{subsec:interface-2015-coupling}, the normalization is~\cite{Merriam1989,CarpenterEtAl2014}
\begin{equation*}
  \frac{\p\bQ}{\p\bW}
  =\bm Y_f\bm Y_f^{\transpose},
  \qquad
  \frac{\p\bm F_n^{(I)}}{\p\bW}
  =\bm Y_f\bm\Lambda_f\bm Y_f^{\transpose}.
\end{equation*}
The derivatives and eigensystem are evaluated at the same consistent
interface state. Since $|\bm\Lambda_f|$ contains nonnegative absolute
characteristic speeds, $\bm D_f\succeq\bm0$.

Contract Eq.~\eqref{eq:semidiscrete-conservative-equation} with
$\bm w_{T_w,K}^{\transpose}\mathbb P_K$ and use the auxiliary-gradient
relation in Eq.~\eqref{eq:discrete-entropy-gradient}. The element
viscous-volume contribution is
\begin{equation}
  \Dcal_{K,h}^{(V)}
  =
  \bm\theta_{i,K}^{\transpose}
  \mathbb P_K[\bm C_{ij}]_K\bm\theta_{j,K}
  \geq0.
  \label{app:eq:discrete-viscous-dissipation}
\end{equation}
On each face $f\subset\partial K$, write
$\bW_{T_w,f,K}=\bW_{f,K}+\be_E/T_w$ for the adapted one-sided trace
from $K$ and $\widehat{\bW}_{T_w,f}=\widehat{\bW}_f+\be_E/T_w$ for
the adapted numerical trace. All normal fluxes below are oriented
outward from $K$. Applying the SBP identities with the numerical-flux
and auxiliary-trace SATs gives
\cite{CarpenterEtAl2014,ParsaniInterface2015}
\begin{align}
  \frac{\dd}{\dd t}\langle1,S_{T_w,K}\rangle_{K,h}
  +\Dcal_{K,h}^{(V)}
  &=-\sum_{f\subset\partial K}
  \left\langle
    (\bW_{T_w,f,K})^{\transpose}\widehat{\bm F}_{n,K}^{(I)}
    -\Psi_{n,K},1
  \right\rangle_{f,h}
  \notag\\
  &\quad+\sum_{f\subset\partial K}
  \left\langle
    (\bW_{T_w,f,K})^{\transpose}\widehat{\bm F}_{n,K}^{(V)},1
  \right\rangle_{f,h}
  \notag\\
  &\quad+\sum_{f\subset\partial K}
  \left\langle
    (\widehat{\bW}_{T_w,f}-\bW_{T_w,f,K})^{\transpose}
    \bm F_{n,K}^{(V)},1
  \right\rangle_{f,h}.
  \label{app:eq:element-adapted-entropy-contraction}
\end{align}
Here, all traces inside each sum are evaluated on $f$, and
$\Psi_{n,K}=n_{i,K}\Psi_i(\bQ_{f,K})$.
The first face sum combines the inviscid volume contraction and the
conservative SAT. The last sum comes from the auxiliary-gradient lifting.

At an interior face shared by $K^-$ and $K^+$, use the common normal
and trace conventions of Subsection~\ref{subsec:interface-2015-coupling}.
Adding the inviscid terms from the two elements and applying Tadmor's
identity gives Eq.~\eqref{eq:merriam-interface-entropy-identity}.
For the viscous terms, the constant affine shift cancels from trace
differences. The LDG trace satisfies
\begin{equation*}
  \widehat{\bW}_f-\bW^-
  =-\frac{1-\alpha}{2}\bm d_f,
  \qquad
  \widehat{\bW}_f-\bW^+
  =\frac{1+\alpha}{2}\bm d_f.
\end{equation*}
The physical outward viscous flux from $K^+$ is $-\bm F_n^{(V),+}$.
Thus, summing the viscous terms gives
\begin{equation*}
  \begin{aligned}
    &\bm d_f^{\transpose}\widehat{\bm F}_{n,f}^{(V)}
    +(\widehat{\bW}_f-\bW^-)^{\transpose}\bm F_n^{(V),-}
    -(\widehat{\bW}_f-\bW^+)^{\transpose}\bm F_n^{(V),+}
    \\
    &\qquad=\frac12\bm d_f^{\transpose}\bm L_f\bm d_f.
  \end{aligned}
\end{equation*}
Indeed, the trace terms have coefficients $-(1-\alpha)\bm d_f/2$
and $-(1+\alpha)\bm d_f/2$ against the common-normal fluxes.
They cancel the complementary LDG weights in
Eq.~\eqref{eq:ldg-ip-viscous-trace} for every $\alpha\in[-1,1]$,
leaving only the IP term \cite{ParsaniInterface2015}.

The same calculation applies to a conforming periodic face pair with
matched quadrature and opposite outward orientations. Under the convention of
Subsection~\ref{subsec:two-sat-global-balances}, each such pair is
counted once in $\mathcal F_h^{\mathrm{int}}$, with its inviscid and IP
dissipations included in $\Dcal_h^X$.

At an isothermal wall, use the fluid-side trace notation
$\bW_{T_w,f,K}=\bW_{T_w}^-$ and
$\widehat{\bW}_{T_w,f}=\widehat{\bW}_{T_w,w}
=(W_1^-,\bm0,0)^{\transpose}$.
The viscous fluxes have zero mass component, so the last two face sums
combine into the contraction of $\bm r_w$ with the viscous-flux correction
in Eq.~\eqref{eq:generic-discrete-wall-entropy-contribution}.
The CR--SAT and FEE--SAT choices then give
Eqs.~\eqref{eq:cr-wall-entropy-contribution}
and~\eqref{eq:fee-wall-entropy-contribution}, respectively.

\subsection{Nodewise wall implementation sequences}
\label{app:wall-implementation-sequences}

The following sequence describes the nodewise construction of the wall SAT
data. Operations common to both wall formulations are evaluated first.

\paragraph{Operations common to both wall formulations}
\begin{enumerate}
  \item Given the outward unit normal $\bn$, prescribed temperature $T_w$,
  and interior conservative trace $\bQ^-$, recover the primitive and entropy
  variables and check that $\rho^->0$ and $T^->0$.

  \item Construct the inviscid mirror state from
  Eq.~\eqref{eq:inviscid-mirror-state} and the common numerical inviscid
  wall flux from
  Eq.~\eqref{eq:inviscid-entropy-conservative-wall-flux}.

  \item Form $\widehat{\bW}_w$, invert the entropy map to obtain the
  admissible wall state $\widehat{\bQ}_w$, and compute
  $\bm r_w=\bW^--\widehat{\bW}_w$. Determine $p_{n,w}$ from the
  wall-adjacent element and evaluate
  $\bm C_{nn,w}^{-}$ and $\widehat{\bm C}_{nn,w}$.

  \item Form the nodewise auxiliary-gradient SAT data
  $\bm g_{i,w}^{\theta}=-n_i\bm r_w$ and assemble the corresponding
  face vectors as described in
  Subsection~\ref{subsec:isothermal-wall-common-data}.
  After completing the auxiliary-gradient equations, use their
  one-sided traces $\bm\theta_j^-=\bm\theta_{j,f,K}$ to evaluate the
  viscous traction $\bm t^-$, Fourier heat flux $q_n^-$, and physical
  normal viscous flux $\bm F_{n,w}^{(V),-}$.

  \item Choose $\beta_w=0$ for the boundary-entropy-conservative variant or
  $\beta_w>0$ for the boundary-entropy-stable variant.
\end{enumerate}

\paragraph{CR--SAT}
\begin{enumerate}
  \item Form $\bm\Lambda_w^{\mathrm{CR}}\succeq\bm0$ and the
  complete-residual correction
  $\bm M_w^{\mathrm{CR}}
  =-\bm\Lambda_w^{\mathrm{CR}}\bm r_w$.

  \item Construct $\widehat{\bm F}_{n,w}^{(V),\mathrm{CR}}$ from
  Eq.~\eqref{eq:cr-wall-viscous-flux}, form the nodewise conservative
  SAT data $\bm g_w^{q,\mathrm{CR}}$ in
  Eq.~\eqref{eq:cr-complete-wall-sat}, and stack these data into
  $\bm g_{f,K}^{q}$ on each wall face. Apply
  $\mathbb P_K^{-1}\mathbb E_{f,K}\bm g_{f,K}^{q}$ in
  Eq.~\eqref{eq:semidiscrete-conservative-equation}.

  \item Monitor the numerical outward total-energy flux
  $F_{E,n,h}^{\mathrm{CR}}$ and the pointwise $S_{T_w}$ wall contribution
  $-\bm r_w^{\transpose}\bm\Lambda_w^{\mathrm{CR}}\bm r_w$.
\end{enumerate}

\paragraph{FEE--SAT}
\begin{enumerate}
  \item Form the skew matrix $\bm K_w$ from $T^-$ and $\bm t^-$ and
  evaluate the entropy-neutral correction $\bm K_w\bm r_w$.

  \item Using $\bm\Pi_u$, form the momentum-projected matrix
  $\bm\Lambda_{w,u}^{\mathrm{FEE}}\succeq\bm0$ and correction
  $\bm M_{w,u}^{\mathrm{FEE}}
  =-\bm\Lambda_{w,u}^{\mathrm{FEE}}\bm r_w$.

  \item Construct $\widehat{\bm F}_{n,w}^{(V),\mathrm{FEE}}$ from
  Eq.~\eqref{eq:fee-wall-viscous-flux}, form the nodewise conservative
  SAT data $\bm g_w^{q,\mathrm{FEE}}$ in
  Eq.~\eqref{eq:fee-complete-wall-sat}, and stack these data into
  $\bm g_{f,K}^{q}$ on each wall face. Apply
  $\mathbb P_K^{-1}\mathbb E_{f,K}\bm g_{f,K}^{q}$ in
  Eq.~\eqref{eq:semidiscrete-conservative-equation}.

  \item Monitor the pointwise identity residual
  $F_{E,n,h}^{\mathrm{FEE}}-q_n^-$ and the pointwise $S_{T_w}$ wall
  contribution
  $-\bm r_w^{\transpose}
  \bm\Lambda_{w,u}^{\mathrm{FEE}}\bm r_w$.
\end{enumerate}

\section{Closed-form fluxes and detailed pointwise verification}
\label{app:closed-form-and-pointwise-details}

This appendix presents the closed-form entropy-conservative fluxes and the
complete randomized verification procedures summarized in
Section~\ref{sec:results}. The closed-form flux tests do not involve a
tensor-product cell, an LGL grid, or the matrices $\mathsf P_K$ and
$\mathsf D_{i,K}$. Consequently, no solution degree $\psol$ is associated
with those results. Separately, the one-dimensional SBP identities in
Eq.~\eqref{eq:one-dimensional-sbp-property} were checked for
$\psol=1,\ldots,12$.

The tolerances below are acceptance thresholds for scaled equality
residuals, which vanish in exact arithmetic. Each residual is scaled
using the magnitudes of the terms being combined, with a unit floor
in the denominator. These thresholds concern the numerical verification
of algebraic identities.

\subsection{Closed-form entropy-conservative fluxes}
\label{subsec:algebraic-flux-verification}

For positive scalars $a_L$ and $a_R$, define the arithmetic and
logarithmic means by
\begin{equation}
  \overline a
  =
  \frac{a_L+a_R}{2},
  \qquad
  a^{\ln}
  =
  \frac{a_R-a_L}{\log a_R-\log a_L}.
  \label{eq:closed-form-means}
\end{equation}
The continuous extension is $a^{\ln}=a_L$ when $a_L=a_R$. Vector
arithmetic means are taken componentwise.

For the Ismail--Roe (IR) flux~\cite{IsmailRoe2009}, introduce
\begin{equation}
  z_1=\sqrt{\rho/p},
  \qquad
  \bm z_2=z_1\bu,
  \qquad
  z_3=\sqrt{\rho p}.
  \label{eq:ismail-roe-z-variables}
\end{equation}
Using arithmetic means denoted by an overbar and logarithmic means denoted by
$(\cdot)^{\ln}$, define
\begin{align}
  \widehat\bu
  &=\frac{\overline{\bm z}_2}{\overline z_1},
  &
  \widehat\rho
  &=\overline z_1 z_3^{\ln},
  \\
  \widehat p_1
  &=\frac{\overline z_3}{\overline z_1},
  &
  \widehat p_2
  &=\frac{\gamma+1}{2\gamma}\frac{z_3^{\ln}}{z_1^{\ln}}
    +\frac{\gamma-1}{2\gamma}\frac{\overline z_3}{\overline z_1},
  \\
  \widehat a^2
  &=\gamma\frac{\widehat p_2}{\widehat\rho},
  &
  \widehat H
  &=\frac{\widehat a^2}{\gamma-1}
    +\frac12|\widehat\bu|^2.
  \label{eq:ismail-roe-averages}
\end{align}
The hatted quantities are two-state flux auxiliaries. In particular,
$\widehat p_1$ and $\widehat p_2$ are pressure averages, while
$\widehat a^2$ and $\widehat H$ are squared-sound-speed and
specific-total-enthalpy auxiliaries. They need not be the properties
of a single common thermodynamic state.
The normal flux is
\begin{equation}
  \bm F_{n}^{\mathrm{IR}}
  =
  \begin{pmatrix}
    \widehat\rho\,\widehat u_n\\
    \widehat\rho\,\widehat u_n\widehat\bu+\widehat p_1\bn\\
    \widehat\rho\,\widehat u_n\widehat H
  \end{pmatrix},
  \qquad
  \widehat u_n=\widehat\bu\cdot\bn.
  \label{eq:ismail-roe-normal-flux}
\end{equation}

For the Chandrashekar (C) flux~\cite{Chandrashekar2013}, set
\begin{equation}
  \beta=\frac{1}{2RT}=\frac{\rho}{2p},
  \qquad
  \widetilde p=\frac{\overline\rho}{2\overline\beta},
  \qquad
  \overline{|\bu|^2}=\frac{|\bu_L|^2+|\bu_R|^2}{2}.
  \label{eq:chandrashekar-means}
\end{equation}
Here, $\beta$ is the inverse-temperature variable of this flux,
not either penalty parameter $\beta_w$ or $\beta_{\mathrm{int}}$.
The mass, momentum, and total-energy flux components are
\begin{align}
  F_{\rho,n}^{\mathrm C}
  &=\rho^{\ln}\,\overline\bu\cdot\bn,
  \\
  \bm F_{m,n}^{\mathrm C}
  &=F_{\rho,n}^{\mathrm C}\overline\bu+\widetilde p\bn,
  \\
  F_{E,n}^{\mathrm C}
  &=\left[
      \frac{1}{2(\gamma-1)\beta^{\ln}}
      -\frac12\overline{|\bu|^2}
    \right]F_{\rho,n}^{\mathrm C}
    +\overline\bu\cdot\bm F_{m,n}^{\mathrm C}.
  \label{eq:chandrashekar-normal-flux}
\end{align}
The subscript $m$ in $\bm F_{m,n}^{\mathrm C}$ labels momentum,
whereas the scalar $m=d+2$ denotes the state-vector dimension.
Both fluxes are symmetric and consistent and satisfy the Tadmor identity~\cite{Tadmor2003} for
the canonical entropy pair. By
Eq.~\eqref{eq:adapted-differences-and-potentials}, the same identity holds for
the affine representative $S_{T_w}$.

\subsection{Closed-form flux test}
\label{subsec:algebraic-test-procedure}

Admissible left and right primitive states are sampled independently from
continuous uniform distributions on the following intervals, with fixed
$\gamma$:
\begin{equation}
  \rho\in[0.2,5],
  \qquad
  u_i\in[-2,2],
  \qquad
  T\in[0.2,5],
  \qquad
  \gamma=1.4.
  \label{eq:flux-test-state-ranges}
\end{equation}
The positivity of $\rho$ and $T$ ensures that the corresponding
conservative states belong to the admissible set defined in
Eq.~\eqref{eq:admissible-set}. For each vectorized batch, one
standard-normal vector is normalized and used as the common unit normal
for every state pair in that batch. The cases are $(d,R)=(2,1)$, $(3,1)$,
and $(3,287)$. For each case and each flux, $10{,}000$ state pairs are
evaluated in batches of $2,048$ using double-precision arithmetic. The
deterministic seeds are $20260717$ for Ismail--Roe and $20260718$ for
Chandrashekar. All tests are performed using a code written in
Fortran 2023.

For either flux, the raw and scaled Tadmor residuals are
\begin{align}
  \mathcal R_{\mathrm T}
  &=
  (\bW_R-\bW_L)^{\transpose}
  \bm F_n^{\mathrm{ec}}(\bQ_L,\bQ_R)
  -\left[\Psi_n(\bQ_R)-\Psi_n(\bQ_L)\right],
  \label{eq:tadmor-test-residual}
  \\
  \widehat{\mathcal R}_{\mathrm T}
  &=
  \frac{|\mathcal R_{\mathrm T}|}
  {1+
   \displaystyle\sum_{\ell=1}^{m}
   |(W_{R,\ell}-W_{L,\ell})(F_n^{\mathrm{ec}})_\ell|
   +|\Psi_n(\bQ_R)|+|\Psi_n(\bQ_L)|}.
  \label{eq:scaled-tadmor-test-residual}
\end{align}
Here, $m=d+2$. The equal-state consistency, left/right symmetry, and reflected-wall errors
are
\begin{align}
  \mathcal E_{\mathrm{cons}}
  &=\|\bm F_n^{\mathrm{ec}}(\bQ,\bQ)-\bm F_n^{(I)}(\bQ)\|_\infty,
  \label{eq:flux-consistency-test-error}
  \\
  \mathcal E_{\mathrm{sym}}
  &=\|\bm F_n^{\mathrm{ec}}(\bQ_L,\bQ_R)
       -\bm F_n^{\mathrm{ec}}(\bQ_R,\bQ_L)\|_\infty,
  \\
  \mathcal E_{\mathrm{wall}}
  &=\|\bm F_n^{\mathrm{ec}}(\bQ^-,\bQ^{+,(I)})
       -\widehat{\bm F}_{n,w}^{(I)}\|_\infty.
  \label{eq:reflected-wall-test-error}
\end{align}
The reflected-wall error uses the state pair
$(\bQ^-,\bQ^{+,(I)})$ defined by
Eq.~\eqref{eq:inviscid-mirror-state}.
The scaled identity tolerance is $2\times10^{-12}$. No unscaled tolerance is
imposed because the dimensional magnitudes vary substantially between the
$R=1$ and $R=287$ cases.

Table~\ref{tab:closed-form-fluxes} summarizes the maximum errors. The
\emph{Case} column identifies the spatial dimension and the specific gas
constant $R$, and the \emph{Flux} column identifies the IR or C flux. The
remaining columns report, respectively, the
maximum absolute raw Tadmor residual $|\mathcal R_{\mathrm T}|$, the maximum
scaled residual $\widehat{\mathcal R}_{\mathrm T}$, the maximum equal-state
consistency error $\mathcal E_{\mathrm{cons}}$, and the maximum reflected-wall
flux error $\mathcal E_{\mathrm{wall}}$.

\begin{table}[h]
\centering
\large
\setlength{\tabcolsep}{8pt}
\renewcommand{\arraystretch}{1.22}
\caption{Closed-form flux verification over $10{,}000$ admissible
left/right state pairs per case for the Ismail--Roe (IR) and Chandrashekar (C) fluxes defined in
\ref{subsec:algebraic-flux-verification}. The four error columns are
specified in \ref{subsec:algebraic-test-procedure}.}
\label{tab:closed-form-fluxes}
\begin{tabular}{llcccc}
\toprule
Case & Flux & \shortstack{Max.\\$|\mathcal R_{\mathrm T}|$} & \shortstack{Max.\\$\widehat{\mathcal R}_{\mathrm T}$} & \shortstack{Max.\\$\mathcal E_{\mathrm{cons}}$} & \shortstack{Max.\\$\mathcal E_{\mathrm{wall}}$} \\
\midrule
2D, $R=1$ & IR & $4.09\times10^{-14}$ & $4.97\times10^{-16}$ & $8.53\times10^{-14}$ & $7.02\times10^{-14}$ \\
2D, $R=1$ & C & $2.40\times10^{-14}$ & $4.64\times10^{-16}$ & $5.68\times10^{-14}$ & $2.99\times10^{-14}$ \\
3D, $R=1$ & IR & $8.70\times10^{-14}$ & $4.14\times10^{-16}$ & $1.14\times10^{-13}$ & $6.92\times10^{-14}$ \\
3D, $R=1$ & C & $4.97\times10^{-14}$ & $4.86\times10^{-16}$ & $8.53\times10^{-14}$ & $4.08\times10^{-14}$ \\
3D, $R=287$ & IR & $1.00\times10^{-11}$ & $1.17\times10^{-15}$ & $2.91\times10^{-11}$ & $1.78\times10^{-11}$ \\
3D, $R=287$ & C & $8.19\times10^{-12}$ & $9.51\times10^{-16}$ & $1.46\times10^{-11}$ & $1.37\times10^{-11}$ \\
\bottomrule
\end{tabular}
\end{table}

The minimum scaled residual is zero in all six cases, while the maximum lies
between $4.14\times10^{-16}$ and $1.17\times10^{-15}$. These values are
consistent with roundoff in double-precision arithmetic. The symmetry error is exactly zero
in all six cases.
For $R=1$, the maximum raw, consistency, and reflected-wall errors are of
order $10^{-14}$--$10^{-13}$. For $R=287$, these errors increase to order $10^{-11}$
because the dimensional fluxes and potentials are larger. The scaled
residual remains at roundoff, so this increase does not indicate a loss of
two-point entropy conservation.

\subsection{Detailed balanced wall campaign}
\label{app:balanced-pointwise-details}

The balanced wall campaign checks the local CR--SAT and FEE--SAT identities
without a mesh or time integrator, using the same generated inputs for both
formulations. For each $d\in\{2,3\}$, $5,000$ independent primitive states are
drawn from continuous uniform distributions on
\begin{equation}
  \rho\in[0.25,4],
  \qquad
  u_i\in[-2,2],
  \qquad
  T\in[0.2,5].
  \label{eq:pointwise-random-state-distributions}
\end{equation}
The entries of the full entropy-variable gradient are independent standard
normal random variables. A standard normal vector is normalized to obtain the
unit wall normal. Independently,
\begin{equation}
  T_w\in[0.2,5],
  \qquad
  p_{n,w}\in[0.05,2]
  \label{eq:pointwise-random-wall-distributions}
\end{equation}
are drawn uniformly. The common parameters are $\gamma=1.4$, $R=1$,
$\mu=0.7$, $\kappa=0.9$, and $\zeta=0.2$. Each test uses
$\beta_w=0$ and $\beta_w=0.25$, giving $40{,}000$ wall evaluations in total.
The seed is $20260728$, and all calculations use double-precision
arithmetic. 
All tests are performed using a code written in
Fortran 2023.

For $X\in\{\mathrm{CR},\mathrm{FEE}\}$, set
\begin{equation*}
  \bm\Lambda_w^X
  =
  \begin{cases}
    \bm\Lambda_w^{\mathrm{CR}},
    & X=\mathrm{CR},\\
    \bm\Lambda_{w,u}^{\mathrm{FEE}},
    & X=\mathrm{FEE},
  \end{cases}
\end{equation*}
and let $\widehat{\bm F}_{n,w}^{(V),X}$ denote the corresponding
numerical viscous wall flux. Define the canonical and adapted viscous wall
contributions as
\begin{align}
  \mathcal B_{S,w}^{X}
  &=
  (\widehat{\bW}_w-\bW^-)^{\transpose}
  \bm F_{n,w}^{(V),-}
  +(\bW^-)^{\transpose}\widehat{\bm F}_{n,w}^{(V),X},
  \\
  \mathcal B_w^{(V),X}
  &=
  (\widehat{\bW}_{T_w,w}-\bW_{T_w}^-)^{\transpose}
  \bm F_{n,w}^{(V),-}
  +(\bW_{T_w}^-)^{\transpose}
  \widehat{\bm F}_{n,w}^{(V),X}.
  \label{eq:pointwise-viscous-wall-contribution}
\end{align}
The inviscid, adapted-entropy, and affine-balance residuals are
\begin{align}
  \mathcal R_{I,w}
  &=
  (\bW_{T_w}^-)^{\transpose}\widehat{\bm F}_{n,w}^{(I)}
  -\Psi_n^-,
  \label{eq:pointwise-inviscid-wall-residual}
  \\
  \mathcal R_{A,w}^{X}
  &=
  \mathcal B_w^{(V),X}
  +\bm r_w^{\transpose}\bm\Lambda_w^X\bm r_w,
  \label{eq:pointwise-adapted-wall-residual}
  \\
  \mathcal R_{\mathrm{aff},w}^{X}
  &=
  \mathcal B_w^{(V),X}
  -\mathcal B_{S,w}^{X}
  +\frac{F_{E,n,h}^{X}}{T_w}.
  \label{eq:pointwise-affine-balance-identity}
\end{align}
For the independently evaluated energy identity, define
\begin{align*}
  F_{E,n,h}^{*,X}
  &=
  \begin{cases}
    q_n^- -\bu^-\!\cdot\bm t^-
    -\be_E^{\transpose}\bm M_w^{\mathrm{CR}},
    & X=\mathrm{CR},\\
    q_n^-,
    & X=\mathrm{FEE},
  \end{cases}
  \\
  \mathcal R_{E,w}^{X}
  &=
  F_{E,n,h}^{X}-F_{E,n,h}^{*,X}.
\end{align*}
For any equality residual $\mathcal R=\sum_k a_k$, define
\begin{equation*}
  \widehat{\mathcal R}
  =
  \frac{|\mathcal R|}{1+\sum_k|a_k|}.
\end{equation*}
The summands $a_k$ are the complete signed scalar contributions in
each residual definition, grouped as follows:
\begin{equation*}
  \begin{array}{c|l}
    \text{Residual} & \text{Scalar summands} \\
    \hline
    \mathcal R_{I,w}
    &
    (\bW_{T_w}^-)^{\transpose}\widehat{\bm F}_{n,w}^{(I)},
    \quad -\Psi_n^-
    \\[0.8ex]
    \mathcal R_{A,w}^{X}
    &
    \mathcal B_w^{(V),X},
    \quad \bm r_w^{\transpose}\bm\Lambda_w^X\bm r_w
    \\[0.8ex]
    \mathcal R_{\mathrm{aff},w}^{X}
    &
    \mathcal B_w^{(V),X},
    \quad -\mathcal B_{S,w}^{X},
    \quad F_{E,n,h}^{X}/T_w
    \\[0.8ex]
    \mathcal R_{E,w}^{X}
    &
    F_{E,n,h}^{X},
    \quad -F_{E,n,h}^{*,X}
  \end{array}
\end{equation*}
Each scalar contribution is evaluated before taking its absolute
value in the denominator. Dot products, quadratic forms, and the
previously defined quantities $\mathcal B_w^{(V),X}$,
$\mathcal B_{S,w}^{X}$, and $F_{E,n,h}^{*,X}$ are not expanded
further for this scaling.
An equality test passes when
$\widehat{\mathcal R}\leq 10^{-11}$.
Entropy-stable cases are also checked separately for
$\mathcal B_w^{(V),X}\leq0$.

The additional scaled Fourier-only mismatch is
\begin{equation}
  \widehat{\mathcal M}_{F,w}^{X}
  =
  \frac{|F_{E,n,h}^{X}-q_n^-|}
  {1+|F_{E,n,h}^{X}|+|q_n^-|},
  \qquad
  X\in\{\mathrm{CR},\mathrm{FEE}\}.
  \label{eq:scaled-fourier-energy-difference}
\end{equation}
It measures the finite-resolution difference between the complete numerical
energy flux and the one-sided numerical Fourier heat flux. It is an identity
residual for the FEE--SAT and a mismatch, rather than a vanishing
residual, for the CR--SAT.

The physical viscous flux, inviscid SAT, viscous SAT, one-sided mechanical
work, wall-penalty energy component, Fourier heat flux, minimum penalty
eigenvalue, and entropy-dissipation quadratic are stored separately. Thus, no
expected identity value is inserted into the flux routine before the residual
is formed.

\begin{table}[H]
\centering
\normalsize
\caption{Balanced pointwise verification of CR--SAT and FEE--SAT.
For each $d\in\{2,3\}$, both formulations and both wall
settings use the same $5,000$ random inputs. The first four rows in each
block report maxima of scaled equality residuals. The final row reports the
maximum scaled Fourier-only mismatch
$\widehat{\mathcal M}_{F,w}^{X}$ from
Eq.~\eqref{eq:scaled-fourier-energy-difference}. This quantity measures the
intended finite-resolution CR--SAT energy-flux difference and is an identity
residual for FEE--SAT.}
\label{tab:balanced-pointwise-wall-identities}
\setlength{\tabcolsep}{8pt}
\renewcommand{\arraystretch}{1.10}
\begin{tabularx}{0.98\linewidth}{>{\raggedright\arraybackslash}Xcc}
\toprule
Diagnostic & CR--SAT & FEE--SAT \\
\midrule
\multicolumn{3}{l}{\textbf{Boundary-entropy-conservative wall}} \\
Maximum scaled inviscid residual & $3.36\times10^{-14}$ & $3.36\times10^{-14}$ \\
Maximum scaled adapted-entropy identity residual & $0$ & $5.68\times10^{-14}$ \\
Maximum scaled affine-balance residual & $7.75\times10^{-15}$ & $1.31\times10^{-14}$ \\
Maximum scaled numerical-energy identity residual & $1.79\times10^{-14}$ & $1.17\times10^{-14}$ \\
Maximum scaled Fourier-only mismatch
$\widehat{\mathcal M}_{F,w}^{X}$
& $9.65\times10^{-1}$ & $1.17\times10^{-14}$ \\
\addlinespace[0.35em]
\multicolumn{3}{l}{\textbf{Boundary-entropy-stable wall}} \\
Maximum scaled inviscid residual & $3.36\times10^{-14}$ & $3.36\times10^{-14}$ \\
Maximum scaled adapted-entropy identity residual & $1.84\times10^{-15}$ & $2.46\times10^{-14}$ \\
Maximum scaled affine-balance residual & $3.35\times10^{-15}$ & $1.27\times10^{-14}$ \\
Maximum scaled numerical-energy identity residual & $2.58\times10^{-14}$ & $1.17\times10^{-14}$ \\
Maximum scaled Fourier-only mismatch
$\widehat{\mathcal M}_{F,w}^{X}$
& $9.97\times10^{-1}$ & $1.17\times10^{-14}$ \\
\bottomrule
\end{tabularx}
\end{table}

Table~\ref{tab:balanced-pointwise-wall-identities} compares the two
formulations. All equality residuals satisfy the stated tolerance, and every
entropy-stable test has a nonpositive wall contribution. The maximum
value of $\widehat{\mathcal M}_{F,w}^{X}$ is approximately one for the CR--SAT
over the deliberately broad random-state range and
$1.17\times10^{-14}$ for the FEE--SAT.

\begin{table}[H]
\centering
\normalsize
\setlength{\tabcolsep}{3.8pt}
\renewcommand{\arraystretch}{1.14}
\caption{Dimension-resolved statistics for the balanced pointwise wall verification. Each formulation and wall setting uses the same $5,000$ admissible inputs in each dimension. The column $\beta_w$ distinguishes the boundary-entropy-conservative setting ($0$) from the boundary-entropy-stable setting ($0.25$). Residuals are listed by row. The numerical columns give their maximum (Max.) and average (Avg.) in two and three dimensions. The residual symbols are defined in the text.}
\label{tab:balanced-pointwise-dimensionwise}
\begin{tabular}{lllcccc}
\toprule
\shortstack{Wall\\formulation} & $\beta_w$ & Residual & \shortstack{$d=2$\\Max.} & \shortstack{$d=2$\\Avg.} & \shortstack{$d=3$\\Max.} & \shortstack{$d=3$\\Avg.} \\
\midrule
CR--SAT & $0$ & $\widehat{\mathcal R}_{I,w}$ & $1.55\times10^{-14}$ & $9.62\times10^{-17}$ & $3.36\times10^{-14}$ & $1.31\times10^{-16}$ \\
 &  & $\widehat{\mathcal R}_{A,w}^{X}$ & $0$ & $0$ & $0$ & $0$ \\
 &  & $\widehat{\mathcal R}_{\mathrm{aff},w}^{X}$ & $6.40\times10^{-15}$ & $8.18\times10^{-17}$ & $7.75\times10^{-15}$ & $8.41\times10^{-17}$ \\
 &  & $\widehat{\mathcal R}_{E,w}^{X}$ & $8.52\times10^{-15}$ & $8.10\times10^{-17}$ & $1.79\times10^{-14}$ & $8.27\times10^{-17}$ \\
\addlinespace[0.25em]
CR--SAT & $0.25$ & $\widehat{\mathcal R}_{I,w}$ & $1.55\times10^{-14}$ & $9.62\times10^{-17}$ & $3.36\times10^{-14}$ & $1.31\times10^{-16}$ \\
 &  & $\widehat{\mathcal R}_{A,w}^{X}$ & $1.52\times10^{-15}$ & $6.09\times10^{-17}$ & $1.84\times10^{-15}$ & $6.72\times10^{-17}$ \\
 &  & $\widehat{\mathcal R}_{\mathrm{aff},w}^{X}$ & $3.18\times10^{-15}$ & $5.98\times10^{-17}$ & $3.35\times10^{-15}$ & $5.96\times10^{-17}$ \\
 &  & $\widehat{\mathcal R}_{E,w}^{X}$ & $8.43\times10^{-15}$ & $7.00\times10^{-17}$ & $2.58\times10^{-14}$ & $7.30\times10^{-17}$ \\
\addlinespace[0.25em]
FEE--SAT & $0$ & $\widehat{\mathcal R}_{I,w}$ & $1.55\times10^{-14}$ & $9.62\times10^{-17}$ & $3.36\times10^{-14}$ & $1.31\times10^{-16}$ \\
 &  & $\widehat{\mathcal R}_{A,w}^{X}$ & $2.84\times10^{-14}$ & $4.16\times10^{-17}$ & $5.68\times10^{-14}$ & $1.11\times10^{-16}$ \\
 &  & $\widehat{\mathcal R}_{\mathrm{aff},w}^{X}$ & $1.31\times10^{-14}$ & $1.14\times10^{-16}$ & $1.08\times10^{-14}$ & $1.49\times10^{-16}$ \\
 &  & $\widehat{\mathcal R}_{E,w}^{X}$ & $1.17\times10^{-14}$ & $1.68\times10^{-16}$ & $1.17\times10^{-14}$ & $2.17\times10^{-16}$ \\
\addlinespace[0.25em]
FEE--SAT & $0.25$ & $\widehat{\mathcal R}_{I,w}$ & $1.55\times10^{-14}$ & $9.62\times10^{-17}$ & $3.36\times10^{-14}$ & $1.31\times10^{-16}$ \\
 &  & $\widehat{\mathcal R}_{A,w}^{X}$ & $2.05\times10^{-14}$ & $7.62\times10^{-17}$ & $2.46\times10^{-14}$ & $8.75\times10^{-17}$ \\
 &  & $\widehat{\mathcal R}_{\mathrm{aff},w}^{X}$ & $1.27\times10^{-14}$ & $7.96\times10^{-17}$ & $9.07\times10^{-15}$ & $8.50\times10^{-17}$ \\
 &  & $\widehat{\mathcal R}_{E,w}^{X}$ & $1.17\times10^{-14}$ & $1.68\times10^{-16}$ & $1.17\times10^{-14}$ & $2.17\times10^{-16}$ \\
\addlinespace[0.25em]
\bottomrule
\end{tabular}
\end{table}

Table~\ref{tab:balanced-pointwise-dimensionwise} resolves the results by
spatial dimension, wall formulation, and wall setting. The residual
symbols $\widehat{\mathcal R}_{I,w}$,
$\widehat{\mathcal R}_{A,w}^{X}$,
$\widehat{\mathcal R}_{\mathrm{aff},w}^{X}$, and
$\widehat{\mathcal R}_{E,w}^{X}$ identify the rows. The numerical
columns give the maximum and arithmetic average of the scaled residuals
in each dimension.
Here, $X$ denotes the formulation identified in the first column.
The wall-penalty column gives $\beta_w=0$ for the
boundary-entropy-conservative setting and $\beta_w=0.25$ for the
boundary-entropy-stable setting.

The nonzero average scaled equality residuals are of order
$10^{-17}$--$10^{-16}$, and the maxima remain below $5.7\times10^{-14}$.

\section{Thermal-relaxation supplementary tests}
\label{app:thermal-relaxation-details}

This appendix gives the stage-integrated energy check and the complete
four-case results for the thermal-relaxation problem specified in
Subsection~\ref{subsec:thermal-relaxation-comparison}.

\paragraph{Stage-integrated energy values}
Let $t_n$ be the start of accepted time step $n$, and set
$\Delta t_n=t_{n+1}-t_n$. Let $\mathcal E_h^n$ denote the discrete
fluid energy evaluated at the numerical state at $t_n$. The superscript
$(n,s)$ denotes evaluation at stage $s$ of that step. The RK4
weights are
\begin{equation*}
  b_1=b_4=\frac16,
  \qquad
  b_2=b_3=\frac13.
\end{equation*}
The superscript $\mathrm{RK}$ identifies Runge--Kutta stage integration.
Using the wall-flux sums defined in
Eq.~\eqref{eq:thermal-relaxation-energy-quantities}, the complete
energy residual and Fourier-only mismatch over one step are
\begin{align}
  \mathcal R_{E,n}^{\mathrm{RK},X}
  &=
  \mathcal E_h^{n+1}-\mathcal E_h^n
  +\Delta t_n\sum_{s=1}^{4}b_s
  \mathcal F_{E,h}^{X,(n,s)},
  \\
  \mathcal M_{F,n}^{\mathrm{RK},X}
  &=
  \mathcal E_h^{n+1}-\mathcal E_h^n
  +\Delta t_n\sum_{s=1}^{4}b_s
  \mathcal Q_{F,h}^{(n,s)}.
  \label{eq:thermal-relaxation-stage-diagnostics}
\end{align}
These quantities are energy increments, not rates. In dimensional
variables, their units are those of energy.

The functional $\mathcal E_h$ is linear in the conservative nodal state.
Consequently, applying it to the Runge--Kutta update gives, in exact
arithmetic,
\begin{equation}
  \mathcal R_{E,n}^{\mathrm{RK},X}
  =
  \Delta t_n\sum_{s=1}^{4}b_s
  \mathcal R_{E,h}^{X,(n,s)}.
  \label{eq:thermal-relaxation-rk-energy-identity}
\end{equation}
Thus, the complete stage-integrated energy balance closes when the
spatial energy balance closes at every stage. This property uses the
same stage fluxes and weights as the solution update and it does not
require exact time integration of the continuous wall flux. The
Fourier-only quantity is a balance residual for the FEE--SAT and a
finite-resolution mismatch for the CR--SAT. 

Unlike total energy, the canonical and adapted entropies are nonlinear
functions of the conservative variables, so the RK4 update used here
does not automatically satisfy an analogous stage-integrated entropy
identity. Relaxation Runge--Kutta methods can enforce a fully discrete
balance for a selected convex entropy by modifying the solution update,
under the conditions of Ref.~\cite{RanochaEtAl2020}. No relaxation
correction is used in the present thermal-relaxation calculations.

\paragraph{Four-case summary and full histories}
Table~\ref{tab:thermal-relaxation-balance-summary} reports maximum
absolute instantaneous values over the recorded states
and maximum absolute stage-integrated values over accepted steps.
It also gives the final fluid-energy change
$\Delta\mathcal E_h=\mathcal E_h(t_f)-\mathcal E_h(0)$.
The instantaneous quantities use the independent evaluations described
in Subsection~\ref{subsec:balance-diagnostics}.

\begin{table}[H]
\centering
\normalsize
\caption{Thermal-relaxation balance tests for the four wall
settings in Subsection~\ref{subsec:thermal-relaxation-comparison}.
Maximum instantaneous entries are maxima of absolute values over the
recorded states. Maximum stage-integrated entries are
maxima of absolute values over accepted time steps and use the
Runge--Kutta stage weights of the solution update. The former are rate
diagnostics, whereas the latter are energy increments. The final
fluid-energy change $\Delta\mathcal E_h$ is signed.}
\label{tab:thermal-relaxation-balance-summary}
\setlength{\tabcolsep}{8pt}
\renewcommand{\arraystretch}{1.10}
\begin{tabularx}{0.98\linewidth}{>{\raggedright\arraybackslash}Xcc}
\toprule
Diagnostic & CR--SAT & FEE--SAT \\
\midrule
\multicolumn{3}{l}{\textbf{Boundary-entropy-conservative wall}} \\
Maximum instantaneous complete energy residual
& $6.94\times10^{-18}$ & $6.94\times10^{-18}$ \\
Maximum instantaneous Fourier-only mismatch
& $3.76\times10^{-9}$ & $6.94\times10^{-18}$ \\
Maximum stage-integrated complete energy residual
& $9.85\times10^{-16}$ & $1.02\times10^{-15}$ \\
Maximum stage-integrated Fourier-only mismatch
& $1.53\times10^{-13}$ & $1.02\times10^{-15}$ \\
Maximum instantaneous adapted-entropy residual
& $6.82\times10^{-18}$ & $6.06\times10^{-18}$ \\
Final fluid-energy change $\Delta\mathcal E_h$
& $-9.26\times10^{-4}$ & $-9.26\times10^{-4}$ \\
\addlinespace[0.35em]
\multicolumn{3}{l}{\textbf{Boundary-entropy-stable wall}} \\
Maximum instantaneous complete energy residual
& $6.94\times10^{-18}$ & $6.94\times10^{-18}$ \\
Maximum instantaneous Fourier-only mismatch
& $1.78\times10^{-4}$ & $6.94\times10^{-18}$ \\
Maximum stage-integrated complete energy residual
& $1.04\times10^{-15}$ & $1.14\times10^{-15}$ \\
Maximum stage-integrated Fourier-only mismatch
& $7.23\times10^{-9}$ & $1.14\times10^{-15}$ \\
Maximum instantaneous adapted-entropy residual
& $7.98\times10^{-18}$ & $5.33\times10^{-18}$ \\
Final fluid-energy change $\Delta\mathcal E_h$
& $-9.24\times10^{-4}$ & $-9.26\times10^{-4}$ \\
\bottomrule
\end{tabularx}
\end{table}

Figures~\ref{fig:thermal-relaxation-energy-terms-full}--
\ref{fig:thermal-relaxation-adapted-entropy-full} give the complete
energy-rate, energy-residual, Fourier-only mismatch, and adapted-entropy
histories for both wall formulations and both wall-penalty settings.

\begin{figure}[!htbp]
  \centering
  \includegraphics[width=\linewidth]
    {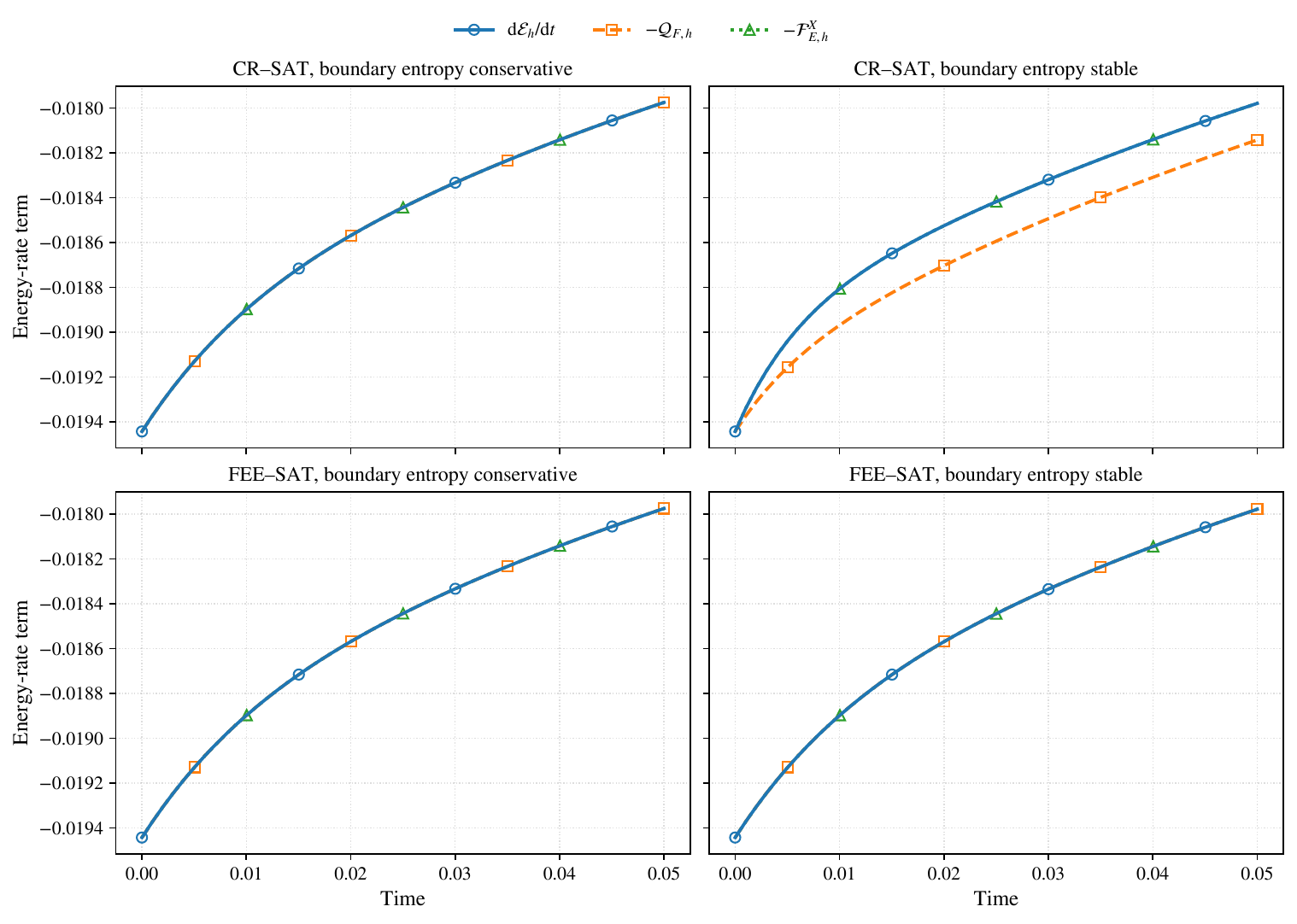}
  \caption{Four-case thermal-relaxation energy-rate histories. The
  upper and lower rows correspond to the CR--SAT and the FEE--SAT,
  respectively. The left and right columns use $\beta_w=0$ and
  $\beta_w=0.25$. These settings distinguish the wall treatment.
  All four calculations retain the same entropy-stable interior
  discretization. The curves compare
  $\mathrm d\mathcal E_h/\mathrm dt$, $-\mathcal Q_{F,h}$, and
  $-\mathcal F_{E,h}^{X}$, evaluated as described in
  Subsection~\ref{subsec:balance-diagnostics}. The flux
  signs are reversed only for comparison with the energy rate.
  Overlap at this scale does not resolve the balance residual or
  Fourier-only mismatch. These are shown in
  Figure~\ref{fig:thermal-relaxation-energy-defects-full} and
  summarized in Table~\ref{tab:thermal-relaxation-balance-summary}.}
  \label{fig:thermal-relaxation-energy-terms-full}
\end{figure}

\begin{figure}[!htbp]
  \centering
  \includegraphics[width=\linewidth]
    {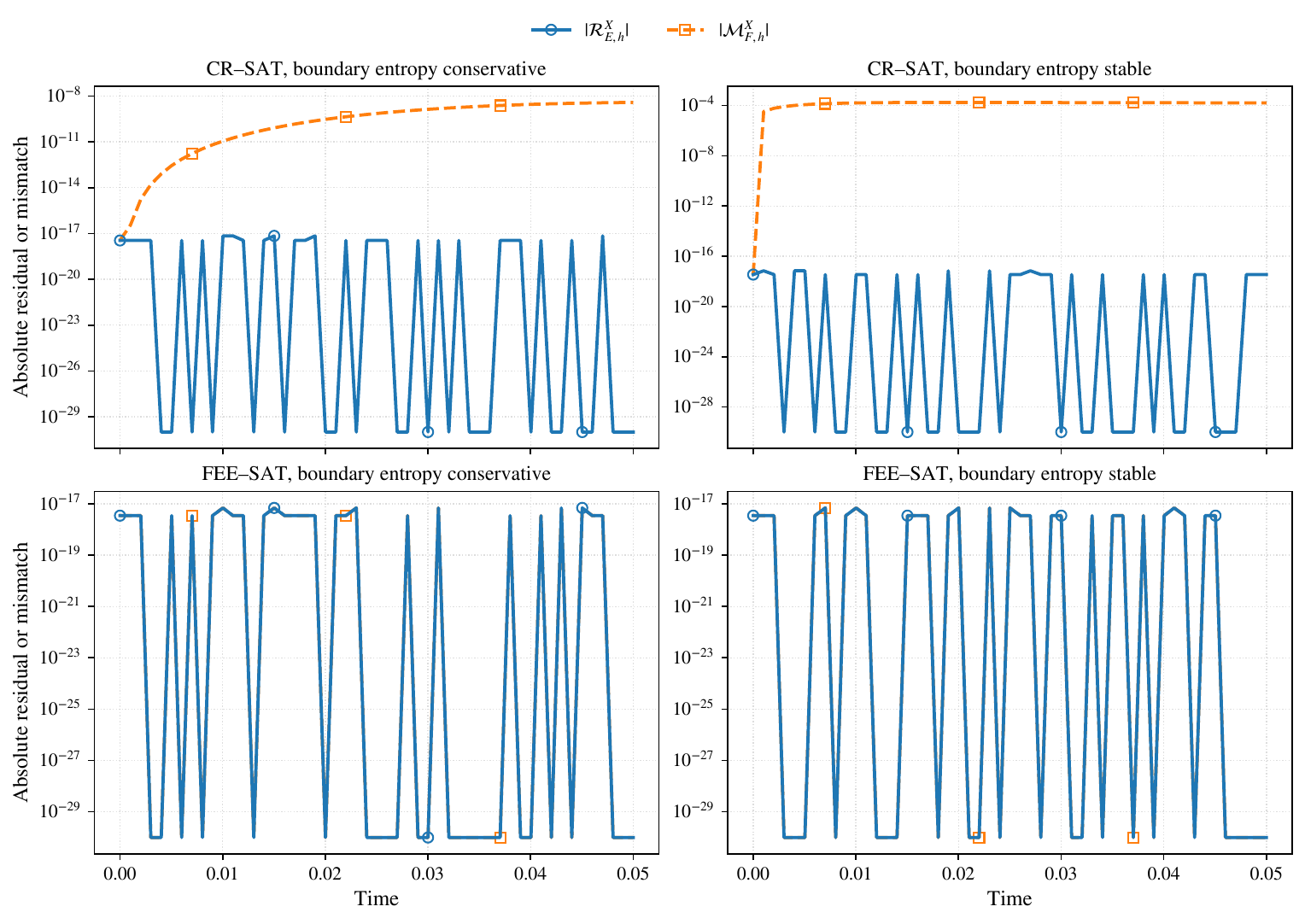}
  \caption{Four-case thermal-relaxation instantaneous energy
  tests. The upper and lower rows correspond to the CR--SAT and
  the FEE--SAT, respectively. The left and right columns use $\beta_w=0$
  and $\beta_w=0.25$. The curves show the unscaled absolute complete
  energy residual $|\mathcal R_{E,h}^{X}|$ and Fourier-only mismatch
  $|\mathcal M_{F,h}^{X}|$ on logarithmic axes. The complete balance
  closes at roundoff scale for all four calculations. The two
  values coincide for the FEE--SAT, whereas a nonzero Fourier-only
  mismatch is permitted for the CR--SAT. These are instantaneous rate
  values, not the stage-integrated energy increments in
  Eq.~\eqref{eq:thermal-relaxation-stage-diagnostics}.}
  \label{fig:thermal-relaxation-energy-defects-full}
\end{figure}

\begin{figure}[!htbp]
  \centering
  \includegraphics[width=\linewidth]
    {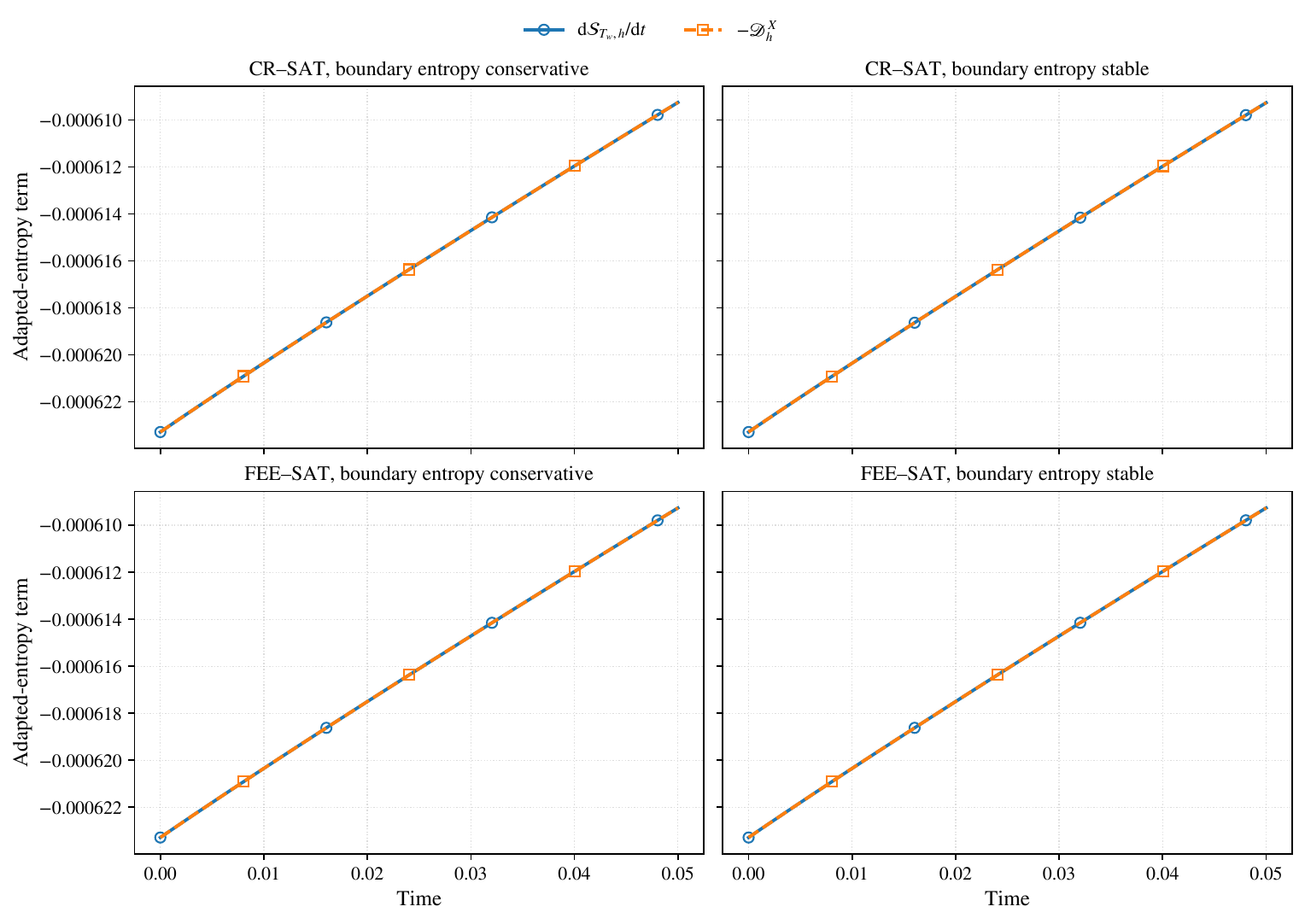}
  \caption{Four-case thermal-relaxation adapted-entropy-rate
  histories. The upper and lower rows correspond to the CR--SAT and the
  FEE--SAT, respectively. The left and right columns use $\beta_w=0$
  and $\beta_w=0.25$. The curves compare
  $\mathrm d\mathcal S_{T_w,h}/\mathrm dt$, evaluated using
  Eq.~\eqref{eq:balance-adapted-entropy-rate}, with the separately
  evaluated $-\Dcal_h^X$. Here, $\Dcal_h^X$ is the total dissipation,
  including the viscous-volume and all active numerical
  contributions, not only $\Dcal_h^{(V)}$. The sign reversal makes
  closure of the adapted-entropy balance appear as overlapping
  curves. The first plotted quantity minus the second is
  $\mathcal R_{S_{T_w},h}^{X}$. This residual is not plotted here.
  Its maximum absolute value is reported in
  Table~\ref{tab:thermal-relaxation-balance-summary}.}
  \label{fig:thermal-relaxation-adapted-entropy-full}
\end{figure}

\section{Discrete error measures and supplementary convergence data}
\label{app:constant-Tw-convergence-data}

\subsection{SBP-quadrature error norms and observed rates}
\label{app:constant-Tw-error-norms}

Let \(\varphi\in\{\rho,u,T\}\), and let
\begin{equation}
  \bm e_{\varphi,K}(t_f)
  =
  \bm\varphi_{h,K}(t_f)
  -
  \bm\varphi_{\mathrm{ex},K}(t_f)
  \label{app:eq:constant-Tw-nodal-error}
\end{equation}
be the vector of errors at the LGL solution nodes of cell \(K\). The
reported errors are evaluated at the final time \(t_f=0.5\). They are not
maxima over \([0,t_f]\). Define the discrete domain measure
\begin{equation}
  |\Omega|_h
  =
  \sum_{K\in\mathcal T_h}
  \bm1_K^{\transpose}\mathsf P_K\bm1_K.
  \label{app:eq:constant-Tw-discrete-domain-measure}
\end{equation}
For the affine meshes used here, \(|\Omega|_h=|\Omega|=1\) to roundoff. The
normalized SBP-quadrature errors are
\begin{align}
  \lVert e_\varphi\rVert_{1,h}
  &=
  \frac{1}{|\Omega|_h}
  \sum_{K\in\mathcal T_h}
  \bm1_K^{\transpose}
  \mathsf P_K
  \left|\bm e_{\varphi,K}\right|,
  \label{app:eq:constant-Tw-discrete-L1}
  \\
  \lVert e_\varphi\rVert_{2,h}
  &=
  \left[
    \frac{1}{|\Omega|_h}
    \sum_{K\in\mathcal T_h}
    \bm e_{\varphi,K}^{\transpose}
    \mathsf P_K
    \bm e_{\varphi,K}
  \right]^{1/2},
  \label{app:eq:constant-Tw-discrete-L2}
  \\
  \lVert e_\varphi\rVert_{\infty,h}
  &=
  \max_{K\in\mathcal T_h}
  \max_{1\leq a\leq\Nsol}
  \left|e_{\varphi,K,a}\right|.
  \label{app:eq:constant-Tw-discrete-Linf}
\end{align}
Equations~\eqref{app:eq:constant-Tw-discrete-L1}--
\eqref{app:eq:constant-Tw-discrete-Linf} follow the discrete SBP definitions
used in Ref.~\cite[Eq.~(4.6)]{RanochaEtAl2020}. The absolute value in
\eqref{app:eq:constant-Tw-discrete-L1} is applied componentwise. The
\(L_\infty\) value is the maximum absolute error over all element-local LGL
nodes. Note that duplicated interface nodes are included as distinct DG degrees of
freedom.

For each fixed variable, norm, solution degree, and wall formulation,
observed rates are computed from successive errors using
Eq.~\eqref{eq:matched-mms-observed-rate}. The rates are assigned to the
finer level and no rate is reported at $\ell=0$.

\subsection{Supplementary convergence data}
\label{app:constant-Tw-detailed-tables}

Supplementary Material~S1 collects the complete paired convergence
results: Section~1 gives the $L_1$, $L_2$, and nodal $L_\infty$ volume
errors and rates for the three configurations in
Eq.~\eqref{eq:numerical-configuration-definitions} (Tables~S1--S9)
and the density and velocity $L_2$ plots (Figures~S1 and~S2).
Section~2 gives the SBP matrix definitions, values, and rates of the
annular-pipe wall errors (Tables~S10--S11).

\section{Reference conventions for multidimensional tests}
\label{app:multidimensional-configurations}

The reference conventions below apply to the three-dimensional tests in
Subsection~\ref{subsec:multidimensional-evidence}. The configurations,
run details, and figures are presented with the corresponding results
in that subsection.

\subsection{Reference scales}
\label{app:multidimensional-reference-scales}

Let $L_0$, $\rho_0$, $U_0$, and $T_0$ denote the reference length, density,
velocity, and temperature. Dimensional variables, marked by the superscript
$\mathrm{dim}$, are scaled according to
\begin{equation*}
  \bm x
  =
  \frac{\bm x^{\mathrm{dim}}}{L_0},
  \qquad
  t
  =
  \frac{t^{\mathrm{dim}}U_0}{L_0},
  \qquad
  \rho
  =
  \frac{\rho^{\mathrm{dim}}}{\rho_0},
  \qquad
  \bu
  =
  \frac{\bu^{\mathrm{dim}}}{U_0},
  \qquad
  T
  =
  \frac{T^{\mathrm{dim}}}{T_0}.
\end{equation*}
In the following reference-parameter definitions, $R$ and $c_p$ are the
dimensional specific gas constant and specific heat capacity, respectively.
With $a_0=\sqrt{\gamma R T_0}$, the reference parameters are
\begin{equation*}
  \mathrm{Ma}
  =
  \frac{U_0}{a_0},
  \qquad
  \mathrm{Re}
  =
  \frac{\rho_0U_0L_0}{\mu_0},
  \qquad
  \mathrm{Pr}
  =
  \frac{\mu_0c_p}{\kappa_0},
\end{equation*}
where $\mu_0$ and $\kappa_0$ are the reference viscosity and thermal
conductivity. The solution variables and spatial and temporal
coordinates in the multidimensional test descriptions are nondimensional
unless explicitly identified as dimensional. Reference quantities retain
their dimensional units.

\section*{Acknowledgments}

This work was funded by King Abdullah University of Science and Technology
(KAUST) under grant number BAS/1/1663-01-01. The authors gratefully
acknowledge the KAUST Supercomputing Laboratory for providing computational
resources on the central processing unit (CPU) partition of the Shaheen III
supercomputer.

The authors also acknowledge the use of Mathematica and SymPy. Grammarly
was used for three rounds of language and grammar checks before
submission. Its generative-AI features were not used. The \LaTeX{} source
files, including the equation environments, were organized and formatted
using \texttt{vim-easy-align} and \texttt{VimTeX}. 

\section*{Data availability}

The numerical data, verification and post-processing tools, figure and
table generators, and reproducibility instructions supporting this study
will be made available upon request. The one-dimensional simulations were
performed using an in-house code that conforms to the Fortran 2023
standard and whose development began in 2014. The three-dimensional
simulations were performed using SSDC, an in-house, mixed-language code
written in Fortran, C, and C++, with input files in the YAML format.

\section*{Declaration of competing interest}
The authors declare no known competing financial interests or personal
relationships that could have appeared to influence the work reported in this
paper.

\section*{A decade of work and a changing landscape of discovery}

The foundations of this work go back to 2014 and to the formative
mentorship of Mark H. Carpenter, an intellectual heir to David I.
Gottlieb in the tradition of the Institute for Computer Applications in
Science and Engineering (ICASE). Mark taught us entropy theory, entropy
stability, and boundary conditions, insisting that every step be
understood and every claim justified. His mentorship shaped not only
our mathematical understanding but also the standards we would hold
ourselves to as scientists.

This work began to close a circle opened in our student years, among
ICASE reports, foundational papers, and monographs on CFD. The writings
of Heinz-Otto Kreiss, Bram van Leer, Amiram (Ami) Harten, Stanley J. Osher,
Antony Jameson, Philip Lawrence Roe, David I. Gottlieb, Eitan Tadmor,
Alfio M. Quarteroni, Claudio Canuto, Mark H. Carpenter, Jan Nordstr\"{o}m,
Chi-Wang Shu, George Em Karniadakis, Randall J. LeVeque, and their
collaborators had become patient but exacting companions: texts
revisited, pencil in hand, until an argument began to yield its meaning.
These connections took shape at the huge green chalkboards in the
aerosciences branch at NASA Langley. Side by side with Mark, we derived
and rederived, questioned, corrected, and began again. Mistakes and steps
we could not explain sent us back to the board; working through them,
again and again, was how understanding deepened into mastery. By day's
end, chalk dust covered our shoes and clothes. Like athletes refining
fundamental movements, we practiced, practiced, and practiced until the
basics became dependable under pressure. The aim was not to memorize a
derivation, but to understand how to arrive at it, reconstruct its steps,
explain its assumptions, and recognize its limits: to build foundations
strong enough for independent thought. \textit{What would later feel
like intuition was taking shape there, day after day, through mistakes,
repetition, and chalk.}

Looking back, we wonder how differently we would approach this problem
if we were starting today. The work presented here took shape and
reached completion through discussions with colleagues, partial
differential equation theory,
calculus, linear algebra, and numerical experimentation. From the
initial ideas through the completed mathematical analysis, numerical
software, and their verification, we did not consult a generative
artificial intelligence (AI) system. Grammarly was used to check the text in three rounds,
interspersed with human reading and editing. Its generative-AI features
were not used. This is not a claim that slower research is better
research. Yet the time spent struggling, returning to an argument, and
discovering why an approach failed did more than produce a result.
It helped develop the understanding needed to recognize why the
result works.

We should not romanticize the difficulties faced by
earlier generations. Young scientists should not have to repeat every
derivation, inefficient calculation, frustrating implementation, or avoidable
mistake simply because their predecessors did. Removing unnecessary
obstacles is progress. The challenge is to distinguish those obstacles
from the intellectual effort through which judgment develops.

Publication practices have to evolve accordingly. A convincing account
of a discovery should make its assumptions, evidence, computational
procedures, and contributions explicit. Accessible data, checkable
arguments, and transparent descriptions of AI use should become more
central, not less. We should resist measuring progress simply by the
number of papers that increasingly powerful systems make it possible
to produce. A paper's value is not easily
quantified. It lies in the reliable, useful human understanding it
contributes and in what it enables others to understand and discover
long after publication.

Completing this work renews our gratitude to our human mentors, collaborators,
and students and reminds us how much we still have to learn. What we
carry from that apprenticeship into the age of AI is the willingness to
stay with a question after an answer has appeared. We want to bring that
habit to problems we once could not have attempted. At a time of
uncertainty, we find ourselves thinking about the world our students,
young people everywhere, and generations yet to come will inherit. We
hope to pass on what our mentors gave us: the patience and trust to find
our own way, and the conviction that careful, honest work remains worth
doing, even when we cannot yet see where it will lead. That inheritance
also carries a duty we take personally: to pursue science for the common
good of all.

\textit{The chalk and pencil marks fade; the responsibility to
understand deeply and question remains.}

\setlength{\bibsep}{7pt plus 1pt minus 1pt}
\bibliographystyle{elsarticle-num}
\bibliography{references}

\end{document}